\documentclass[11pt]{article}

\usepackage{mathtools}
\usepackage{color}
\usepackage{microtype}
\usepackage[usenames,dvipsnames,svgnames,table]{xcolor}
\usepackage{amsfonts,amsmath, amssymb,amsthm,amscd,mathrsfs}
\usepackage[height=9in,width=6.5in]{geometry}
\usepackage{verbatim}
\usepackage{tikz}
\usetikzlibrary{decorations.markings}
\usetikzlibrary{snakes}
\tikzset{->-/.style={decoration={
			markings,
			mark=at position .6 with {\arrow{>}}},postaction={decorate}}}
\usepackage{tkz-euclide}

\usepackage[margin=10pt,font=small,labelfont=bf]{caption}
\usepackage{latexsym}
\usepackage[pdfauthor={ },bookmarksnumbered,hyperfigures,colorlinks=true,citecolor=BrickRed,linkcolor=BrickRed,urlcolor=BrickRed,pdfstartview=FitH]{hyperref}

\usepackage{graphicx}
\usepackage{enumitem}

\usepackage[inline,nolabel]{showlabels}

\usepackage[capitalize]{cleveref}
\Crefname{fact}{Fact}{Facts}
\crefname{theorem}{Theorem}{Theorems}
\crefname{thm}{Theorem}{Theorems}
\crefname{lemma}{Lemma}{Lemmas}
\crefname{claim}{Claim}{Claims}
\crefname{lem}{Lemma}{Lemmas}
\crefname{remark}{Remark}{Remarks}
\crefname{prop}{Proposition}{Propositions}
\crefname{defn}{Definition}{Definitions}
\crefname{corollary}{Corollary}{Corollaries}
\crefname{conjecture}{Conjecture}{Conjectures}
\crefname{question}{Question}{Questions}
\crefname{chapter}{Chapter}{Chapters}
\crefname{section}{Section}{Sections}
\crefname{part}{Part}{Parts}
\crefname{figure}{Figure}{Figures}

\newtheorem{theorem}{Theorem}[section]
\newtheorem{corollary}[theorem]{Corollary}
\newtheorem{lemma}[theorem]{Lemma}
\newtheorem{proposition}[theorem]{Proposition}

\newtheorem{remark}[theorem]{Remark}

\numberwithin{equation}{section}

\newcommand{\cD}{{\ensuremath{\mathcal D}} }

\newcommand{\cP}{{\ensuremath{\mathcal P}} }

\DeclareMathSymbol{\leqslant}{\mathalpha}{AMSa}{"36} 
\DeclareMathSymbol{\geqslant}{\mathalpha}{AMSa}{"3E} 
\DeclareMathSymbol{\eset}{\mathalpha}{AMSb}{"3F}     
\renewcommand{\ge}{\;\geqslant\;}                   
\renewcommand{\leq}{\;\leqslant\;}                   
\renewcommand{\geq}{\;\geqslant\;}                   

\newcommand{\be}{\begin{equation}}
	\newcommand{\ee}{\end{equation}}
\newcommand\ba{\begin{align}}
	\newcommand\ea{\end{align}}

\usepackage{array,booktabs,longtable,enumitem}
\usepackage{xcolor}
\usepackage{soul} 

\newtheorem*{conjecture}{Conjecture}

\newcommand{\PP}{\mathbf P}
\newcommand{\EE}{\mathbf E}
\newcommand{\one}{\mathbf1}

\newcommand{\phit}{\varphi^{\mathrm T}}
\newcommand{\Zhat}{\widehat Z}
\newcommand{\Lhat}{\widehat\Lambda}
\DeclareMathOperator{\Cov}{Cov}
\DeclareMathOperator{\dist}{dist}
\DeclareMathOperator{\diam}{diam}
\DeclareMathOperator{\sgn}{sgn}
\DeclareMathOperator{\wt}{wt}

\newcommand{\Nhood}{\mathcal N}

\begin {document}
\author{
	Pengfei Tang\thanks{Center for Applied Mathematics and KL-AAGDM, Tianjin University, Tianjin, 300072, China.
		Email: \textsf{pengfei\_tang@tju.edu.cn}. Supported by the National Natural Science Foundation of China No. 12571151.}
	\qquad
	Zibo Zhang\thanks{School of Mathematics, Tianjin University, Tianjin, 300350, China.
		Email: \textsf{19932788533@163.com}.}
}
\date{\today}
\title{Negative correlation for the random-cluster model\newline below one on high-degree regular graphs}
\maketitle


\begin{abstract}
	The random-cluster model with cluster parameter \(0<Q<1\) is conjectured
	to exhibit negative dependence, but even pairwise negative correlation
	between distinct edges remains open on general finite graphs. We first
	show that restricting the problem to regular graphs of diverging degree
	does not essentially weaken the pairwise conjecture: validity for all
	such graph sequences, even when restricted to edge pairs at distance
	\(o(d/\log d)\), is equivalent to validity on arbitrary finite graphs.
	
	We then prove strict pairwise negative correlation for a broad class of
	high-degree regular graph sequences satisfying a two-scale
	edge-isoperimetric condition. More precisely, for every fixed
	\(p\in(0,1)\), all sufficiently large graphs in the sequence have
	strictly negative covariance between any two distinct edge indicators
	at distance \(o(d/\log d)\), uniformly in \(Q\in[0,1)\) and in the
	choice of the two edges. In particular, the result includes the
	\(Q=0\) endpoint, corresponding to Bernoulli bond percolation
	conditioned to be connected.
	
	The isoperimetric hypothesis is satisfied by  high-degree
	expander families and, with probability tending to one, by uniformly
	random regular graphs of diverging degree; its two-scale form also
	allows product geometries such as hypercubes and fixed-side
	high-dimensional discrete tori. The proof is based on a polymer
	representation and cluster expansion, together with a geometric
	identification of the leading contribution to the two-edge
	correlation.
\end{abstract}


\enlargethispage{3pt}
\tableofcontents


\section{Introduction}\label{sec:introduction}

	The random-cluster model, introduced by Fortuin and Kasteleyn, is a probability measure on subgraphs that includes independent bond percolation as the case \(Q=1\) and is closely related to the ferromagnetic Potts model when \(Q\) is a positive integer \cite{FortuinKasteleyn1972,Grimmett2006}. One of its fundamental features is the sharp contrast in correlation behavior across \(Q=1\). For \(Q\geq 1\), positive association provides a powerful and robust comparison principle. For \(0<Q<1\), by contrast, the corresponding negative-dependence theory remains substantially less understood: even the sign of the covariance between two edges on a general finite graph remains unresolved \cite[Section~1]{Pemantle2000}; see also \cite[Section~4]{GrimmettWinkler2004}.

Let $G=(V,E)$ be a finite connected simple graph. A configuration
$\omega\subseteq E$ specifies the open edges, and $k(\omega)$ is the number
of connected components of $(V,\omega)$, including isolated vertices. For
$p\in(0,1)$ and $Q>0$, the random-cluster measure is
\begin{equation}\label{eq:usual-rc}
\phi_{G,p,Q}(\omega)=\frac{1}{Z_{G,p,Q}}
p^{|\omega|}(1-p)^{|E|-|\omega|}Q^{k(\omega)},
\qquad \omega\subseteq E.
\end{equation}
Here $Z_{G,p,Q}$ is the normalizing constant. The case $Q=1$ is Bernoulli
bond percolation; integer $Q\geq2$ gives the random-cluster representation
of the ferromagnetic $Q$-state Potts model
\cite{FortuinKasteleyn1972}.

Our result includes the endpoint $Q=0$. It is therefore convenient to use
\begin{align}
\Zhat_{G,p,Q}
&:=\sum_{\eta\subseteq E}p^{|\eta|}(1-p)^{|E|-|\eta|}Q^{k(\eta)-1},
\label{eq:extended-Z}\\
\mu_{G,p,Q}(\omega)
&:=\frac{p^{|\omega|}(1-p)^{|E|-|\omega|}Q^{k(\omega)-1}}
{\Zhat_{G,p,Q}},\qquad Q\ge 0,
\label{eq:extended-measure}
\end{align}
with $0^0=1$. The fully open configuration shows that the denominator is
positive. For $Q>0$, $Z_{G,p,Q}=Q\Zhat_{G,p,Q}$, so
$\mu_{G,p,Q}=\phi_{G,p,Q}$. At $Q=0$ only connected configurations retain
positive weight, and hence
\begin{equation}\label{eq:connected-conditioned}
\mu_{G,p,0}=\PP_p\bigl(\,\cdot\mid(V,\omega)\text{ is connected}\bigr),
\end{equation}
where $\PP_p$ denotes Bernoulli$(p)$ bond percolation. In particular,
$\mu_{G,1/2,0}$ is uniform on the connected spanning subgraphs. This fixed-$p$
endpoint is different from the forest and spanning-tree limits, which
involve joint scalings of $p$ and $Q$
\cite[Section~4]{GrimmettWinkler2004}. The finite-sum formula also gives
$\mu_{G,p,Q}\to\mu_{G,p,0}$ as $Q\downarrow0$.

\subsection{Negative dependence below one}\label{subsec:negative-dependence}
For $g\in E$, let $X_g=\one_{\{g\in\omega\}}$, and let $V(g)$ be its set
of endpoints. We use the edge distance
\begin{equation}\label{eq:edge-distance}
\dist_G(e,f):=\min_{x\in V(e),\,y\in V(f)}\dist_G(x,y).
\end{equation}
Thus distinct edges sharing a vertex have distance zero.

A probability measure $\nu$ on $\{0,1\}^{E}$ is \emph{pairwise negatively
	correlated} (p-NC, or NC) if
\begin{equation}\label{eq:negative-correlation}
\Cov_\nu(X_e,X_f)
=\nu(X_e=X_f=1)-\nu(X_e=1)\nu(X_f=1)\leq0
\qquad(e\neq f).
\end{equation}
It is \emph{negatively associated} (NA) if
$\Cov_\nu(F,H)\leq0$ for every pair of increasing real-valued functions
$F,H$ depending on disjoint coordinate sets. It is \emph{conditionally
	negatively associated} (CNA) if this remains true after conditioning on
any coordinate assignment of positive probability
\cite[Definition~2.7]{BorceaBrandenLiggett2009}.
An external field $\boldsymbol t\in(0,\infty)^E$ changes the measure to
\begin{equation}\label{eq:external-field}
\nu^{\boldsymbol t}(\omega)=
\frac{\nu(\omega)\prod_{g\in E}t_g^{X_g(\omega)}}
{\sum_{\eta\subseteq E}\nu(\eta)\prod_{g\in E}t_g^{X_g(\eta)}}.
\end{equation}
NA$^+$ and NC$^+$ require NA and NC, respectively, under every positive
external field; NC$^+$ is also called the \emph{Rayleigh property} \cite[Section~1]{KahnNeiman2010}.
CNA$^+$ requires NA to survive external fields, projections, and coordinate
conditioning. On a finite Boolean space, NA$^+$ and CNA$^+$ coincide:
projections preserve NA, and conditioning follows by taking suitable field
parameters to zero or infinity. The defining inequalities pass to these
limits; see \cite[Section~1]{KahnNeiman2010}. In particular,
\[
\mathrm{CNA}^+\equiv\mathrm{NA}^+
\ \Longrightarrow\ \mathrm{CNA}
\ \Longrightarrow\ \mathrm{NA}
\ \Longrightarrow\ \mathrm{p\text{-}NC}.
\]
The present work concerns the last property in this hierarchy: for the random-cluster model, even this basic two-edge covariance question is nontrivial and, on general finite graphs, has remained unresolved.

For $Q\geq1$, the FKG lattice condition implies positive association
\cite{FortuinKasteleynGinibre1971,Grimmett2006}; the precise random-cluster
statement is \cite[Theorem~3.8]{Grimmett2006}. For $0<Q<1$, conditioning
on all edges except $g=\{x,y\}$ instead gives
\begin{equation}\label{eq:single-edge-conditional}
\phi_{G,p,Q}(X_g=1\mid(X_b)_{b\neq g})=
\begin{cases}
p,&x\text{ and }y\text{ are connected without }g,\\[1mm]
\displaystyle\frac{p}{p+Q(1-p)},&\text{otherwise}.
\end{cases}
\end{equation}
The second probability is larger than $p$. Thus $Q<1$ penalizes additional
components and favors edges that merge components. Different edges capable
of effecting the same merger may compete. This observation motivates
negative dependence, but does not determine unconditional covariances,
since connectivity is a global constraint. Kahn and Neiman conjectured
NA$^+$ for random-cluster measures below one
\cite[Conjecture~11]{KahnNeiman2010}. Even the following weaker statement
is open in general.

\begin{conjecture}[Pairwise negative correlation]\label{conj:pnc}
	For every finite graph $G$, every $p\in(0,1)$, and every $Q\in(0,1)$,
	the random-cluster measure $\phi_{G,p,Q}$ satisfies
	\eqref{eq:negative-correlation}.
\end{conjecture}

For comparison with generating-polynomial methods, set
\[
\mathcal Z_G(Q,\boldsymbol x)
:=\sum_{\omega\subseteq E}Q^{k(\omega)-1}\prod_{g\in\omega}x_g,
\qquad \boldsymbol x\in(0,\infty)^E.
\]
Direct differentiation gives, for $e\neq f$,
\begin{equation}\label{eq:covariance-derivative}
\Cov_{\mu_{G,p,Q}}(X_e,X_f)
=\left.x_ex_f\partial_{x_e}\partial_{x_f}
\log\mathcal Z_G(Q,\boldsymbol x)\right|_{x_g=p/(1-p),\ g\in E}.
\end{equation}
  Thus, the conjecture studied here asks for the relevant mixed logarithmic derivatives to be nonpositive along the homogeneous activity vector. By contrast, requiring the same inequality throughout the positive orthant \((0,\infty)^E\) corresponds to the NC$^+$  (Rayleigh) property.   At $Q=1$ the polynomial is
	$\prod_{g\in E}(1+x_g)$,  so all mixed logarithmic derivatives vanish.

\subsection{Historical background and current status}\label{subsec:history}
The problem is closely related to negative dependence for random forests
and uniform connected  subgraphs. These models arise from the random-cluster model in suitable \(Q\downarrow0\) regimes \cite[Section~1.5]{Grimmett2006}. Kahn proposed the uniform-forest
conjecture \cite[Conjecture~10.11]{Kahn2000}, and Pemantle identified the
random-cluster model below one as a basic test case for a general theory of
negative dependence \cite[Section~1]{Pemantle2000}. Grimmett and Winkler
formulated the forest and connected-subgraph conjectures in
\cite[Conjecture~1.1]{GrimmettWinkler2004} and discussed their random-cluster
interpretation in Section~4. Their term \emph{edge-negative association}
refers to p-NC, rather than to full NA. They also verified the forest
inequality computationally for all graphs with at most eight vertices and
for graphs with nine vertices and at most eighteen edges
\cite[Theorem~1.3]{GrimmettWinkler2004}.

Spanning trees admit stronger conclusions. The balanced-matroid argument
of Feder and Mihail gives NA for the uniform spanning-tree measure
\cite{FederMihail1992}. A complementary description is supplied by the
transfer-impedance formula of Burton and Pemantle: its finite-graph form
is \cite[Theorem~1.1]{BurtonPemantle1993}, whereas their Theorem~4.2 treats
periodic graphs. The resulting determinantal structure is special and does
not give an analogous general argument for forests, connected subgraphs,
or random-cluster measures with $0<Q<1$.

A second approach uses generating polynomials and graph or matroid
operations. Semple and Welsh proved that independence correlation and
spanning correlation are preserved under series--parallel extensions
\cite[Proposition~3.7]{SempleWelsh2008}. Wagner's Potts--Rayleigh framework
establishes NC$^+$ for series--parallel graphs through
\cite[Example~5.1 and Theorem~5.8(d)]{Wagner2008}; this result is also
recorded immediately after Conjecture~11 in \cite{KahnNeiman2010}.
These closure arguments give exact inequalities, but apply to restricted
classes of graphs.

Stable and Lorentzian polynomials provide different structural criteria.
A measure is \emph{strongly Rayleigh} when its multivariate generating
polynomial is real stable, that is, nonzero whenever every variable has
positive imaginary part. Borcea, Br\"and\'en, and Liggett proved that
strongly Rayleigh measures are CNA$^+$
\cite[Theorem~4.9]{BorceaBrandenLiggett2009}. This includes spanning-tree
measures, but does not cover general random-cluster measures on graphs
with cycles; see their Section~3.4. Br\"and\'en and Huh proved that the
homogenized multivariate Tutte polynomial of a matroid is Lorentzian for
$0<Q\leq1$ \cite[Theorem~4.10]{BrandenHuh2020}. Their Propositions~4.21
and~4.25 give the $2$-Rayleigh property for uniform independent-set
measures. Applied to a uniform forest $F$, this yields
\[
\PP(e,f\in F)\leq2\PP(e\in F)\PP(f\in F).
\]
The desired inequality has constant $1$, so Lorentzianity alone does not
settle the sign question.

There are also results for particular geometries and parameter regimes.
Stark proved p-NC for uniform forests on sufficiently large complete graphs
\cite{Stark2011}. \c{C}i\c{c}eksiz's master's thesis reports a complete-graph
result on negative edge dependence for the random-cluster measure
\cite[Abstract]{Ciceksiz2021}. For the arboreal gas, Huang proved negative
correlation of adjacent edges on each fixed finite connected graph when
the edge weight is sufficiently large \cite{Huang2024}.
In \cite[Theorems~1.4--1.6]{TangZhang2026}, we proved p-NC on sufficiently
large complete graphs for uniform connected spanning subgraphs, forests
with a fixed number of components, and connected spanning subgraphs with
fixed excess. On wired trees, Park proved CNA$^+$ for the vector of
parallel branch-connectivity indicators
\cite[Proposition~6.1]{Park2026}, and NA for increasing observables on
branch-separated edge sets, including the corresponding infinite-volume
statement \cite[Theorem~6.4 and Corollary~6.5]{Park2026}. The separation
restriction in the latter result does not give full NA for arbitrary
disjoint edge sets. These advances exploit additional algebraic or
geometric structure; the pairwise question for general finite graphs
remains unresolved.

\subsection{Why a geometric hypothesis is needed}\label{subsec:motivation}
A large degree might appear to simplify the problem by itself. The
following reduction, proved in Section~\ref{sec:reduction}, shows why a
geometric hypothesis is needed for the approach developed here.

\begin{proposition}\label{thm:reduction}
	Fix $p\in(0,1)$ and $Q\in[0,1)$. The following statements are equivalent.
	\begin{enumerate}
		\item[(i)] For every finite connected simple graph $H$ and every pair of distinct
		edges $e,f\in E(H)$, $\Cov_{\mu_{H,p,Q}}(X_e,X_f)\leq0$.
		\item[(ii)] For every sequence $(G_n)$ of finite connected simple $d_n$-regular
		graphs with $d_n\to\infty$, for all sufficiently large $n$ and every pair
		of distinct edges $e,f\in E(G_n)$,
		$\Cov_{\mu_{G_n,p,Q}}(X_e,X_f)\leq0$.
		\item[(iii)] For every sequence $(G_n)$ as in \textup{(ii)} and every sequence of
		nonnegative integers $(r_n)$ satisfying $r_n\log d_n=o(d_n)$, for all
		sufficiently large $n$ and all distinct $e,f\in E(G_n)$ with
		$\dist_{G_n}(e,f)\leq r_n$, one has
		$\Cov_{\mu_{G_n,p,Q}}(X_e,X_f)\leq0$.
	\end{enumerate}
\end{proposition}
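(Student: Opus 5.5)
The implications (i)$\Rightarrow$(ii)$\Rightarrow$(iii) are immediate: (ii) is a special case of (i), and (iii) restricts (ii) to pairs at distance at most $r_n$. The content is (iii)$\Rightarrow$(i). I would argue by contraposition. Suppose some finite connected simple graph $H$ and distinct edges $e,f\in E(H)$ satisfy $\Cov_{\mu_{H,p,Q}}(X_e,X_f)=\delta>0$. From $H$ I will build a sequence of $d_n$-regular graphs $G_n$ with $d_n\to\infty$. Each $G_n$ contains a copy of $H$ with $e,f$ at bounded distance (so $r_n$ may be taken constant), and the covariance of the copied edges converges to a positive multiple of $\delta$. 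This contradicts (iii).

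The construction I would use is a \emph{blow-up by dense pads}. Attach to each vertex $v$ of $H$ a large, highly connected gadget $K^{(v)}$, for instance a complete graph or a dense expander on $m_n$ vertices, glued at $v$ or joined to $v$ by a single edge. Then complete the degrees to a common value $d_n\approx m_n$, adding edges only inside the gadgets and between gadgets through many parallel-free connections. For regularity I would take a large $d_n$-regular host graph in which the gadgets sit. The copy of $H$ is planted by deleting a few host edges and rerouting them so that degrees are preserved, which is the standard switching trick. Parity and degree bookkeeping are routine, since $H$ has bounded size and $d_n$ is large.

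The key analytic step is a decoupling statement. In $G_n$, conditional on the configuration outside $E(H)$, the restriction of $\mu_{G_n,p,Q}$ to $E(H)$ is a random-cluster measure on $H$ with boundary condition given by which vertices of $H$ are connected through the outside. If the outside is, with probability $1-o(1)$, such that no two vertices of $H$ are connected externally (free boundary), then the marginal on $E(H)$ converges in total variation to $\mu_{H,p,Q}$, and the covariance converges to $\delta>0$. Achieving free boundary with dense pads is difficult, because dense pads get internally connected with overwhelming probability. I would therefore rely on the opposite regime: all of $V(H)$ is externally wired together with probability $1-o(1)$. The conditional law on $E(H)$ is then Bernoulli$(p)$, with zero covariance, which is useless.

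The fix I would try first is to separate the pads. Give each $v\in V(H)$ its own dense cluster $C_v$, and connect distinct clusters to each other only through the edges of $H$. Regularity can still be preserved inside each $C_v$, since it is a $d_n$-regular graph on $\sim d_n$ vertices with one vertex of slightly reduced internal degree that is compensated by the $H$-edges. The whole graph is then connected. Each $C_v$ is internally connected with probability $1-e^{-cd_n}$, and when $d_n\to\infty$ this holds even after conditioning on $H$-edges, because a single vertex is isolated inside $C_v$ only with probability $\approx(1-p)^{d_n}$ up to bounded $Q$-factors. On that event, the clusters behave as super-vertices. The number of components of $G_n$ equals $k_H(\omega|_{E(H)})$ plus the number of internal fragments, and the internal fragment weights factor out. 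Hence the law on $E(H)$ converges to $\mu_{H,p,Q}$ exactly, and $\Cov\to\delta$.

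The distance between $e$ and $f$ in $G_n$ is at most $\diam(H)$, which is constant, so any $r_n\to\infty$ (or constant $r_n\geq\diam H$) with $r_n\log d_n=o(d_n)$ works. The main obstacle is making each $C_v$ exactly $d_n$-regular while $v$ also carries its $H$-degree. I would handle this by choosing $C_v$ to be a $d_n$-regular graph with $\deg_H(v)$ edges removed, taken as a matching among neighbours not equal to $v$ so that degrees are corrected. The $H$-edges are then attached to $v$ and to $\deg_H(v)$ other boundary vertices, each receiving one extra inter-cluster edge; this is arranged as a small symmetric gadget so that $V(H)$-images still induce $H$. The second obstacle is a uniform (in $Q\in[0,1)$, including $Q=0$) estimate that the probability a cluster is internally disconnected is $o(1)$. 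For this I would use \eqref{eq:single-edge-conditional} and a union bound over cuts of the dense cluster, with weight at most $Q^{-1}$-free bounds that are valid because $Q\le1$ only helps connectivity.
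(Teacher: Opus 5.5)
\paragraph{Gap: the degree-completion step.} Your final architecture is the right one, and it is the paper's construction: each $v\in V(H)$ gets a private cluster $C_v\ni v$, and distinct clusters are joined only by edges of $H$. The gap is in the step that carries the content of the proposition, namely making the graph exactly $d_n$-regular. Your recipe for this would break that architecture.

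\begin{itemize}
\item Removing a matching among vertices other than $v$ does not lower the degree of $v$. So $v$ ends with degree $d_n+\deg_H(v)$.
\item If you instead remove $\deg_H(v)$ edges at $v$, then $\deg_H(v)$ vertices become deficient. Giving each of them ``one extra inter-cluster edge'' creates non-$H$ edges between clusters. The dense clusters are internally connected with probability $1-o(1)$, so such edges wire distinct vertices of $H$ together externally. The law on $E(H)$ is then a random-cluster measure with partially wired boundary, not $\mu_{H,p,Q}$. This is exactly the failure you diagnosed in your first attempt.
\item Parity is not routine. Suppose $d_n$ is even and $\deg_H(v)$ is odd. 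A cluster attached to the rest only through the $H$-edges at $v$ has internal degree sum $(d_n-\deg_H(v))+d_n(|C_v|-1)$, which is odd. So no such $C_v$ exists.
\end{itemize}

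What you need is a lemma of the following form. For odd $D$ and each $1\leq r\leq D$, there is a connected simple graph with a root of degree $r$ and all other vertices of degree $D$. The paper builds it as the complement of a sparse graph on $D+2$ or $D+3$ vertices: the root is joined to a set $A$, and the remaining vertices carry a perfect matching, or a path through two vertices of $A$ plus a matching. It then takes $D_n$ odd with $D_n>\Delta(H)$ and glues $J_{D_n,D_n-\deg_H(v)}$ at $v$, with no other edges leaving the gadget.

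\paragraph{The probabilistic step is unnecessary.} Once the gadget is glued so that $v$ is a cut vertex, your connectivity estimate becomes superfluous. For every configuration,
\[
k_{G_n}(\omega)-1=\bigl(k_H(\omega\cap E(H))-1\bigr)+\sum_{v}\bigl(k_{C_v}(\omega\cap E(C_v))-1\bigr).
\]
Hence $\mu_{G_n,p,Q}$ is exactly the product of $\mu_{H,p,Q}$ with the gadget measures, and this also holds at $Q=0$ with $0^0=1$. The covariance of $X_e,X_f$ therefore equals its value on $H$ for every $n$. You need no high-probability internal-connectivity event, no limit, no uniform-in-$Q$ cut bound, and no contraposition. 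Your distance observation $\dist_{G_n}(e,f)\leq\dist_H(e,f)$, with a constant $r_n$, is correct and suffices for (iii).
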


Thus a theorem for unrestricted high-degree regular graphs, even at the
indicated distance scale, would imply the full finite-graph conjecture.
The construction proving Proposition~\ref{thm:reduction} preserves the original
edge law exactly, but introduces bottlenecks. Our isoperimetric assumption
excludes these bottlenecks and supplies the quantitative information needed
to control a cluster expansion.

\subsection{The graph condition and the main theorem}\label{subsec:main}
Let $G_n=(V_n,E_n)$ be finite connected simple $d_n$-regular graphs, and
write $N_n=|V_n|$. Assume that
\begin{equation}\label{eq:degree-limit}
d_n\longrightarrow\infty.
\end{equation}
For $S\subseteq V_n$, write $E_{G_n}(S):=\{\{x,y\}\in E_n\colon x,y \in S  \}$ for its internal edges,
$e_{G_n}(S)=|E_{G_n}(S)|$, and
\[
\partial_{G_n}S:=\{\{x,y\}\in E_n:x\in S,\ y\in V_n\setminus S\}.
\]
Let $(r_n)$ be nonnegative integers such that
\begin{equation}\label{eq:distance-assumption}
r_n\log d_n=o(d_n).
\end{equation}
For a fixed $h>0$, set
\begin{equation}\label{eq:L-definition}
L_n:=\left\lceil\frac{16}{h}(r_n+2)\right\rceil,
\qquad m_{L_n}(S):=\min\{|S|,L_n\}.
\end{equation}
Our hypothesis is the two-scale edge-isoperimetric inequality
\begin{equation}\label{eq:two-scale-isoperimetry}
|\partial_{G_n}S|\geq h\bigl(|S|+d_n m_{L_n}(S)\bigr)
\qquad\text{for all }1\leq|S|\leq N_n/2,
\end{equation}
for every sufficiently large $n$.

\begin{theorem}\label{thm:main}
	Fix $p\in(0,1)$. Let $(G_n)$ satisfy \eqref{eq:degree-limit} and
	\eqref{eq:two-scale-isoperimetry}, and let $(r_n)$ satisfy
	\eqref{eq:distance-assumption}, with $L_n$ defined by
	\eqref{eq:L-definition}. For all sufficiently large $n$, every
	$Q\in[0,1)$, and all distinct $e,f\in E_n$ with
	$\dist_{G_n}(e,f)\leq r_n$,
	\begin{equation}\label{eq:main-conclusion}
	\Cov_{\mu_{G_n,p,Q}}(X_e,X_f)<0.
	\end{equation}
	The threshold in $n$ is uniform in $Q,e,f$.
\end{theorem}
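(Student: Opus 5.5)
We plan to prove \cref{thm:main} with a polymer representation around the fully connected phase, followed by a cluster expansion of $\log\mathcal Z$; the two-edge covariance is then read off through \eqref{eq:covariance-derivative}. Since $p$ is fixed and $d_n\to\infty$, a typical configuration is connected, and deviations from connectivity come from small vertex sets $S$ that are cut off. We expand $Q^{k(\omega)-1}=\prod_{\text{non-giant clusters}}Q$ relative to the giant component. The weight $\Zhat_{G,p,Q}$ is then written as a partition function of polymers. A polymer is a connected vertex set $S$ with $|S|\le N_n/2$ that is isolated from the rest, together with its internal configuration. It carries activity
\[
w(S)\approx Q^{k(\omega|_S)}(1-p)^{|\partial_{G_n}S|}\cdot(\text{internal weight})\,\big/\,\PP_p(\text{$S$ internally connected}).
\]
Compatibility means vertex disjointness plus no boundary overlap, and large components are handled by conditioning on the complement being connected. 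The two-scale bound \eqref{eq:two-scale-isoperimetry} gives $|w(S)|\le (1-p)^{h(|S|+d_n m_{L_n}(S))}$. For $|S|\le L_n$ the entropy is at most $(ed_n)^{|S|}$, so the Koteck\'y--Preiss criterion holds with decay factor $d_n^{|S|}(1-p)^{hd_n|S|}\to0$. For larger $S$, the $h|S|$ term beats the $(ed_n)^{|S|}$ count only after we absorb the $d_nL_n$ term. I expect to choose the norm function $a(S)=c\,m_{L_n}(S)\log d_n+c'|S|$, checking the bound separately at the two scales. All of this is uniform in $Q\in[0,1]$ because $Q^k\le1$.

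Next we identify the leading term of the covariance. Differentiate $\log\mathcal Z=\log(\text{ground term})+\sum_{\text{clusters}}\phi(\cdot)w$ in $x_e,x_f$. The Bernoulli part contributes nothing, so only polymer clusters whose support sees both $e$ and $f$ contribute. The leading contributions are the single-vertex polymers $S=\{v\}$ with $v\in V(e)\cap V(f)$ when $e,f$ share a vertex. For such a polymer the isolation event requires both edges closed, which gives a contribution of order $-(1-Q)\,p^2(1-p)^{d_n}\cdot(\text{const})$. More generally, for $\dist(e,f)=r$, we take the minimal structure: a polymer $S$ containing endpoints of both edges, or a pair of overlapping polymers. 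Its boundary must contain both $e$ and $f$, or its internal merging must compete with them. By the single-edge computation \eqref{eq:single-edge-conditional}, the sign of the leading term is negative: two edges that can effect the same merger compete. The main quantitative term is about $-(1-Q)c\,(1-p)^{d_n(1+o(1))}$, up to a path factor of size at most $d_n^{O(r_n)}$, and this factor is $e^{o(d_n)}$ by \eqref{eq:distance-assumption}.

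The main obstacle is the sign and magnitude comparison: we must show that all remaining clusters touching both $e$ and $f$ are smaller than the identified negative leading term, uniformly in $Q$. The worry is the factor $(1-Q)$, which vanishes as $Q\to1$. We will therefore factor out $(1-Q)$ exactly from every polymer activity, using $Q^k-1=(Q-1)(\cdots)$ with nonnegative coefficients. The remaining error clusters then have larger boundary. Here the $d_nm_{L_n}(S)$ term is crucial: any polymer with $|S|\ge2$ pays at least $(1-p)^{2hd_n}$ or more, which beats the path entropy $d_n^{r_n}$ because $L_n\gtrsim r_n/h$. The same holds for clusters of several polymers, since each pays its own boundary. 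I would first try the case $\dist(e,f)=0$, then extend by a path-counting bound, verifying that the leading geometric configuration has strictly the smallest boundary among those linking $e$ and $f$.
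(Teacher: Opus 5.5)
Your proposal has genuine gaps in the convergence step, in the identification and sign of the leading term, and in the error comparison.

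\textbf{Convergence and the polymer representation.} Your Koteck\'y--Preiss step does not close at large scales. For $|S|>L_n$ the hypothesis gives only $|\partial S|\ge h|S|+hd_nL_n$. Combined with your count of $(\mathrm{e}d_n)^{|S|}$ connected sets of size $|S|$ through a vertex, this produces the bound $s^{hd_nL_n}\sum_{k>L_n}(\mathrm{e}d_ns^{h})^k$, with $s=1-p$. That bound is of order $(\mathrm{e}d_ns^h)^{N_n/2}$, because $\mathrm{e}d_ns^h\to\infty$. A fixed boundary surplus $hd_nL_n$ cannot pay for entropy exponential in $|S|$ with base of order $d_n$. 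Since $e^{a(T)}\ge1$, no choice of control function $a(S)$ can compensate. The missing ingredient is a count of connected sets by \emph{boundary} size rather than by volume. The Carlson--Davies--Fraiman--Kolla--Potukuchi--Yap theorem gives at most $d^{C(1+1/h)b/d}$ connected sets $S\ni v$ with $|S|\le N/2$ and $|\partial S|=b$, using only $|\partial A|\ge h|A|$. This is what yields $\sum_{S\ni v}s^{|\partial S|}e^{\lambda(|S|+d\,m_L(S))}\le e^{-cd}$ at every scale; polymers with $|S|>N/2$ are then handled by splitting them into such blocks. Your representation also needs repair: after removing small components, ``conditioning on the complement being connected'' does not leave a hard-core gas. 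The paper instead expands $Q^{k_-}=\prod_K\bigl(1+(Q-1)\bigr)$ over open components of size at most $N/2$. This leaves all other edges free Bernoulli and gives exact signed activities with $|w_D(S)|\le(1-Q)s^{|\partial S|}$. The exact remainder $(1-Q)R_D$, supported on the no-giant event, is bounded separately.

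\textbf{Leading term, sign, and error scale.} For $\ell=\dist(e,f)\ge1$, every cluster surviving the four-term cancellation has connected support meeting $V(e)$ and $V(f)$. Hence it has total size at least $\ell+1$, and the leading term is of order $(1-Q)s^{d(\ell+1)}$, not $(1-Q)s^{d(1+o(1))}$. The minimal clusters are all partitions of all shortest paths into consecutive intervals, weighted by Ursell coefficients of both signs, and no single configuration dominates. A path polymer on $\ell+1$ vertices has $w_\varnothing(S)=(Q-1)s^{d(\ell+1)-2\ell}\bigl(p+(Q-1)s\bigr)^{\ell}$, which vanishes at $Q=0$, $p=1/2$. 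So the configuration with the smallest boundary contributes nothing in the uniform connected-subgraph case. The sign must therefore come from evaluating the total of all minimal clusters, which the competition heuristic based on \eqref{eq:single-edge-conditional} does not supply. The paper sums the interval gas on a path via $F_m$ and the identity $F_{m-2}F_m-F_{m-1}^2=(\alpha-1)^{m-1}y^m$, obtaining $(Q-1)s^{d(\ell+1)}\kappa_Q^\ell$ per shortest path.

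Your error bound is also at the wrong scale. Bounding polymers with $|S|\ge2$ by $(1-p)^{2hd_n}$ does not beat even $s^{d}$ once $h<1/2$. The hypothesis allows this: for tori $(\mathbb Z/m\mathbb Z)^n$ with fixed large $m$, coordinate slices force $h=O(1/m)$. The paper instead uses $|\partial S|=d|S|-2e(S)$ with $|S|\le L=o(d)$, so each unit of excess total size costs $s^{d-O(L)}$. It chooses $L_n\ge16(r_n+2)/h$ so that the tails $e^{-\lambda dL}$ fall below $s^{d(\ell+1)}$. These tails come from clusters of total size beyond $L$, from polymers larger than $N/2$, and from the no-giant event. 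The path and cluster entropy $d^{O(r_n)}$ is absorbed by $r_n\log d_n=o(d_n)$, not by the choice of $L_n$.
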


The inequality is strict, and one threshold works throughout the half-open
interval $Q\in[0,1)$. The result includes the connected-conditioned measure
at $Q=0$, including its uniform version when $p=1/2$. The permitted distance
may diverge: any $r_n=o(d_n/\log d_n)$ is allowed when the corresponding
isoperimetric hypothesis holds. Uniformity of the threshold does not mean
that the covariance is bounded away from zero uniformly in $Q$. For a
fixed graph it tends to zero as $Q\uparrow1$.

More precisely, the proof identifies a shortest-path contribution to the
logarithmic correlation.  Writing \(\ell=\dist_G(e,f)\), let \(\nu_\ell(e,f)\) denote the number of shortest paths directed from \(V(e)\) to \(V(f)\) when \(\ell\geq1\). When \(\ell=0\), i.e.,  \(e\) and \(f\) have a common endpoint, we adopt the convention \(\nu_0(e,f)=1\).   Put $s=1-p$ and $\kappa_Q=p(p+Qs)/s^2$. Then
\begin{equation}\label{eq:intro-asymptotic}
\log\frac{\mu_{G,p,Q}(X_e=X_f=1)}
{\mu_{G,p,Q}(X_e=1)\mu_{G,p,Q}(X_f=1)}
=-(1-Q)\nu_\ell(e,f)s^{d(\ell+1)}\kappa_Q^\ell\bigl(1+o(1)\bigr),
\end{equation}
where $o(1)$ is uniform in $Q\in[0,1)$ and in the admissible edge
pair, for fixed $p,h$.
This is established by the explicit remainder estimate in
Proposition~\ref{prop:asymptotic} and the comparison in
Section~\ref{subsec:main-proof}.

\subsection{Motivation for the two-scale condition and examples}\label{subsec:examples-intro}

	The hypothesis is designed to include two expansion regimes. For $|S|\leq L_n$, it
	requires degree-scale expansion,
	\[
	|\partial S|\geq h(d_n+1)|S|,
	\]
	whereas for $|S|\geq L_n$ it requires only
	\[
	|\partial S|\geq h|S|+hd_nL_n.
	\]
	The distinction is necessary for product graphs. Indeed, consider the $n$-dimensional hypercube $\mathsf Q_n=\{0,1\}^n$ and the half-cube obtained by fixing one coordinate,
	\[
	S=\{x=(x_1,\ldots,x_n)\in\{0,1\}^n, x_1=0\}.
	\]
	Then $|S|=2^{n-1}$, and each vertex of $S$ has exactly one neighbor outside $S$, obtained by changing its first coordinate from $0$ to $1$. Hence $|\partial S|=|S|$. Since $\mathsf Q_n$ has degree $d_n=n$,
	\[
	\frac{|\partial S|}{d_n|S|}=\frac1n\longrightarrow0.
	\]
	Thus degree-scale expansion cannot hold uniformly over all scales.

If $|\partial S|\geq h_0d_n|S|$ for all $|S|\leq N_n/2$ and a fixed
$h_0>0$, then \eqref{eq:two-scale-isoperimetry} follows with $h=h_0/2$.
This includes complete graphs, balanced complete multipartite graphs,
fixed-dimensional Hamming graphs with growing alphabet, Paley graphs,
and independent-set blow-ups of a fixed nontrivial connected regular graph.
Section~\ref{subsec:spectral-examples} gives definitions and verifies these
claims using the one-sided adjacency spectral gap. All these particular
families have bounded diameter, so the result applies to every distinct
edge pair for sufficiently large degree.

The hypercube and fixed-side high-dimensional tori are not uniformly
expanding in this normalized sense, but satisfy the two-scale condition.
For the $n$-dimensional hypercube $\mathsf Q_n$, the classical inequality
\cite{Harper1964,Hart1976} is
\[
|\partial_{\mathsf Q_n}S|\geq|S|\bigl(n-\log_2|S|\bigr).
\]
It yields the required condition when $L_n\log n=o(n)$, and hence covers
distances $r_n=o(n/\log n)$. The product-graph inequality of Diskin and
Samotij \cite[Section~3.5, equation~(8)]{DiskinSamotij2025} gives a common
proof for hypercubes and fixed-side tori. In fact, the same verification
applies to Cartesian powers of any fixed nontrivial connected regular
graph; see Corollary~\ref{cor:products}.

For the random-regular example, fix integer pairs $(N_n,d_n)$ with
$0\leq d_n\leq N_n-1$, $N_nd_n$ even, and $d_n\to\infty$.
Choose $G_n$ uniformly from the simple $d_n$-regular graphs on the fixed
labeled vertex set $[N_n]\coloneq \{1,2,\ldots,N_n\}$. Proposition~\ref{prop:random-iso} shows that,
for any fixed $0<h<1/2$ and every deterministic choice of $L_n\geq0$,
\eqref{eq:two-scale-isoperimetry} holds with probability at least
$1-N_n^{-1}$ for all sufficiently large $n$. There is no additional growth
restriction relating $N_n$ and $d_n$, and the isoperimetric threshold is
independent of $L_n$. Consequently, for admissible $(r_n)$ the strict
covariance inequality holds with this probability, simultaneously for all
$Q\in[0,1)$ and all edge pairs at distance at most $r_n$.

\subsection{Cluster expansion and the proof strategy}\label{subsec:strategy}
In this paper, we use a cluster expansion to extract the first nonzero
connected correction to independence. The basic representation is a
hard-core gas of connected vertex sets. Its activities are signed, which
makes Ueltschi's complex-measure formulation particularly convenient
\cite[Theorem~1]{Ueltschi2004}. We use that theorem and its rooted bound,
equation~(4) in \cite{Ueltschi2004}, with the Koteck\'y--Preiss condition verified separately for
every activity to which the theorem is applied.

There is first an exact decomposition, not an assumption that a giant
component always exists. If $D$ is a set of at most two edges forced open,
we write the constrained partition function as
$\Zhat_D=Y_D+(1-Q)R_D$. Here $Y_D$ has an exact polymer representation,
whereas $R_D$ is supported on the event that every open component has size
at most $N/2$. Both polymers larger than $N/2$ and this exceptional event
are estimated explicitly. Their effects on the relevant logarithms are
exponentially small in the volume and in $dL$.

The central summability input is a bound on connected sets with a specified
boundary. Theorem~3 of Carlson, Davies, Fraiman, Kolla, Potukuchi, and Yap
\cite{CarlsonEtAl2024} gives a count of the form $d^{O_h(b/d)}$ for sets
containing a fixed vertex and having $b$ boundary edges. For fixed $p$ and
large $d$, the factor $s^b$ from closed boundary edges dominates this count.
Combined with \eqref{eq:two-scale-isoperimetry}, this proves a weighted
activity bound and the Koteck\'y--Preiss criterion. An additional weight
$e^{\lambda d\min\{|S|,L\}}$ controls the tail in total polymer size. A
separate size-marking lemma justifies the activities $u^{|S|}w_D(S)$ used
in the path calculation, on a complex disk containing the closed unit
disk. Thus coefficient extraction and analytic convergence are kept
logically distinct.

Passing to the four-term logarithmic difference for the two marked edges
cancels every cluster whose support fails to meet both endpoint sets.
A remaining cluster has total polymer size at least $\ell+1$. At equality,
its support is an induced shortest path and its labels partition that path
into intervals. A finite recurrence evaluates the sum of these minimal
contributions exactly, giving the negative term in
\eqref{eq:intro-asymptotic}. Larger clusters need not have the same sign;
their total absolute weight is bounded by a smaller quantity. Throughout,
the estimates retain a common factor $1-Q$, which is essential for
uniformity up to $Q=1$.

The resulting picture is that a shortest connected defect controls the
leading two-edge interaction, while the isoperimetric hypothesis suppresses
competing corrections. The method is perturbative: the present argument
requires high degree and the stated isoperimetric condition. It proves
pairwise, distance-restricted negative correlation, not NA or stability
under arbitrary external fields.

\subsection{Notation and organization}\label{subsec:conventions}
Except in the reduction argument and the examples, we fix $n$ and suppress
it from the notation:
\[
\begin{gathered}
G=(V,E),\quad N=|V|,\quad d=d_n,\quad L=L_n,\\
E(S)=E_{G_n}(S),\quad e(S)=|E(S)|,\quad \partial S=\partial_{G_n}S.
\end{gathered}
\]
We write $s=1-p$, $a=-\log s>0$, and $\tau=1-Q$. All logarithms without
a specified base are natural. Constants $c,C,C_0,\ldots$ are positive and
finite. Unless otherwise stated, they may depend on $p,h$, but not on
$n,Q,e,f$, the permitted distance, or the particular graph. A constant is
fixed within each statement and proof; its value may change between
statements.

The assumptions imply
\begin{equation}\label{eq:scale-consequences}
(r_n+1)\log d_n=o(d_n),\qquad
L_n=O_h(r_n+1),\qquad L_n\log d_n=o(d_n).
\end{equation}
Applying the graph condition to a singleton gives $h<1$. Since $G_n$ is
simple, $N_n\geq d_n+1$, and therefore, for all sufficiently large $n$,
\begin{equation}\label{eq:L-range}
r_n+1<L_n<d_n/2<N_n/2.
\end{equation}
A notation index, including the constrained partition functions, modified
activities, and spectral quantities, is given in
Appendix~\ref{sec:notation-index}, beginning on page~\pageref{sec:notation-index}.

The paper is organized as follows. Section~\ref{sec:reduction} proves
Proposition~\ref{thm:reduction}. Section~\ref{sec:polymers} develops the exact
partition-function identities. Section~\ref{sec:estimates} establishes the
activity and cluster estimates, including all applications of Ueltschi's
theorem. Section~\ref{sec:paths} proves the cancellation and shortest-path
formulas. Section~\ref{sec:proof-examples} completes the proof of
Theorem~\ref{thm:main} and verifies the graph condition in the examples.


\section{Reduction to high-degree regular graphs}\label{sec:reduction}
This section proves Proposition~\ref{thm:reduction}. The construction attaches
regularizing graphs at cut vertices, so that the original edge law and all
original vertex distances are preserved exactly.

\begin{lemma}[Factorization at a cut vertex]\label{lem:one-sum}
	Let $G_i=(V_i,E_i)$, $i=1,2$, be finite connected simple graphs with
	$V_1\cap V_2=\{v\}$ and $E_1\cap E_2=\varnothing$. Write
	$G_1\vee_vG_2=(V_1\cup V_2,E_1\cup E_2)$. For $p\in(0,1)$ and
	$Q\in[0,1)$,
	\begin{align}
	\Zhat_{G_1\vee_vG_2,p,Q}
	&=\Zhat_{G_1,p,Q}\Zhat_{G_2,p,Q},\label{eq:one-sum-Z}\\
	\mu_{G_1\vee_vG_2,p,Q}
	&=\mu_{G_1,p,Q}\otimes\mu_{G_2,p,Q}.
	\label{eq:one-sum-law}
	\end{align}
\end{lemma}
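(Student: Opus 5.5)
The plan is to prove \eqref{eq:one-sum-Z} and \eqref{eq:one-sum-law} together from one pointwise identity for the unnormalized weights. Set $G=G_1\vee_vG_2$ and $E=E_1\sqcup E_2$. Every configuration $\omega\subseteq E$ splits uniquely as $\omega=\omega_1\sqcup\omega_2$ with $\omega_i=\omega\cap E_i$, and conversely every such pair gives a configuration of $G$. The factors $p^{|\omega|}(1-p)^{|E|-|\omega|}$ factor over this splitting at once, since $|\omega|=|\omega_1|+|\omega_2|$ and $|E|=|E_1|+|E_2|$. So the only thing to establish is the component identity
\[
k_G(\omega)-1=\bigl(k_{G_1}(\omega_1)-1\bigr)+\bigl(k_{G_2}(\omega_2)-1\bigr),
\]
where $k_{G_i}(\omega_i)$ counts the components of $(V_i,\omega_i)$, including isolated vertices.

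To prove the component identity, I will describe the components of $(V_1\cup V_2,\omega_1\cup\omega_2)$ directly.
\begin{enumerate}
	\item Since $V_1\cap V_2=\{v\}$ and every edge of $\omega_i$ has both endpoints in $V_i$, any path in $\omega$ that passes from $V_1\setminus\{v\}$ to $V_2\setminus\{v\}$ must pass through $v$.
	\item Hence each component of $(V_1,\omega_1)$ not containing $v$ is still a component of $(V,\omega)$, and likewise for $G_2$.
	\item The component of $v$ in $(V_1,\omega_1)$ and the component of $v$ in $(V_2,\omega_2)$ merge into one component of $(V,\omega)$.
\end{enumerate}
It follows that $k_G(\omega)=k_{G_1}(\omega_1)+k_{G_2}(\omega_2)-1$, which is the component identity. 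This is the only step with any content, and it is elementary.

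Consequently, with the convention $0^0=1$, the weight of $\omega$ under $G$ equals the product of the weights of $\omega_1$ under $G_1$ and $\omega_2$ under $G_2$. This also covers $Q=0$: there the component identity shows $k_G(\omega)=1$ if and only if $k_{G_1}(\omega_1)=k_{G_2}(\omega_2)=1$, since each $k_{G_i}\ge1$. Summing the weight identity over all $\omega$, which is the same as summing over all pairs $(\omega_1,\omega_2)$, gives \eqref{eq:one-sum-Z}. Dividing the weight identity by \eqref{eq:one-sum-Z} gives \eqref{eq:one-sum-law}; each $\Zhat_{G_i,p,Q}$ is positive because the fully open configuration has positive weight.
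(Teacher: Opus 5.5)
Your proposal is correct and follows the paper's proof: the unique splitting $\omega=\omega_1\sqcup\omega_2$, the component identity $k_G(\omega)-1=(k_{G_1}(\omega_1)-1)+(k_{G_2}(\omega_2)-1)$ from merging the two components at $v$, and the resulting factorization of weights, then summation and normalization. Your $Q=0$ check (the total exponent vanishes exactly when both nonnegative exponents vanish) is equivalent to the paper's remark that $0^{b+c}=0^b0^c$ for nonnegative integers $b,c$.
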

\begin{proof}
	Every configuration has a unique decomposition
	$\omega=\omega_1\cup\omega_2$, with $\omega_i\subseteq E_i$. Exactly the
	two open components containing $v$ are merged in the one-vertex sum.
	Consequently,
	\[
	k_{G_1\vee_vG_2}(\omega)-1
	=\bigl(k_{G_1}(\omega_1)-1\bigr)
	+\bigl(k_{G_2}(\omega_2)-1\bigr).
	\]
	The edge counts also add. Thus the unnormalized weight in
	\eqref{eq:extended-measure} is the product of the two corresponding
	weights. This remains valid at $Q=0$, since both exponents are nonnegative
	integers and $0^{b+c}=0^b0^c$ under our convention. Summing proves
	\eqref{eq:one-sum-Z}; normalization proves \eqref{eq:one-sum-law}.
\end{proof}

\begin{lemma}[A degree-completion gadget]\label{lem:gadget}
	Let $D\geq3$ be odd and $1\leq r\leq D$. There is a finite connected
	simple graph $J_{D,r}$ with a specified vertex $\rho$ such that
	\[
	\deg_{J_{D,r}}(\rho)=r,
	\qquad \deg_{J_{D,r}}(x)=D\quad(x\neq\rho).
	\]
	It has $D+2$ vertices if $r$ is even, and $D+3$ vertices if $r$ is odd.
\end{lemma}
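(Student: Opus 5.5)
The construction is explicit and splits according to the parity of $r$. In both cases I would start from a complete graph on the non-root vertices, delete a sparse subgraph to create exactly $r$ degree deficits, and join $\rho$ to the $r$ deficient vertices. The only things to check are the degree counts, simplicity, connectivity, and the parity conditions that make the deleted subgraph exist.

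\emph{Case $r$ even.} Since $D$ is odd, $r$ even forces $2\leq r\leq D-1$.
\begin{enumerate}
\item Take $K_{D+1}$ on vertices $x_1,\dots,x_{D+1}$; every vertex has degree $D$.
\item Delete a matching $M$ of size $r/2$. This is possible because $r/2\leq (D+1)/2$. Exactly $r$ vertices now have degree $D-1$.
\item Add the vertex $\rho$ and join it to these $r$ vertices.
\end{enumerate}
Then $\deg(\rho)=r$, every $x_i$ has degree $D$, and the graph is simple with $D+2$ vertices. For connectivity: $K_{D+1}$ minus a matching, with $D+1\geq4$, is connected, since any two vertices have a common neighbour. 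Also $\rho$ has $r\geq2$ neighbours in it.

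\emph{Case $r$ odd.} We use $D+2$ non-root vertices; $D+2$ is odd, and $D+2-r$ is even and at least $2$.
\begin{enumerate}
\item Build the graph $H$ to be removed on these vertices: $r$ vertices of degree $2$ and $D+2-r$ vertices of degree $1$. Concretely, take a path whose $r$ interior vertices are the designated degree-$2$ vertices and whose two endpoints are two of the degree-$1$ vertices. Add a perfect matching on the remaining $D-r$ vertices, which is possible because $D-r$ is even.
\item Let $J'$ be the complement of $H$ in $K_{D+2}$. In $J'$, the $r$ interior path vertices have degree $D+1-2=D-1$, and all other vertices have degree $D$.
\item Add $\rho$ and join it to the $r$ degree-$(D-1)$ vertices.
\end{enumerate}
This gives $\deg(\rho)=r$, all other degrees equal to $D$, and $D+3$ vertices in total.

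\emph{Connectivity of $J'$ (the one slightly delicate point).} It suffices that any two non-adjacent vertices $x,y$ of $J'$ have a common neighbour. Non-adjacency in $J'$ means $xy\in H$. Since $H$ has maximum degree $2$, each of $x,y$ has at most one $H$-neighbour besides the other. So among the $D\geq3$ remaining vertices, at most two are excluded as neighbours of $x$ or of $y$. Hence some remaining vertex is a common $J'$-neighbour. Finally $\rho$ has $r\geq1$ neighbours, so the whole graph is connected.

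The main (minor) obstacle is the bookkeeping of parities: the deleted subgraph exists exactly when the number of degree-$1$ vertices is even. I would verify this, together with the small case $D=3$, in the connectivity argument as above.
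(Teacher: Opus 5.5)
Your proof is correct, and your construction is exactly the paper's. The paper writes $J_{D,r}$ as the complement of a graph $F$ in which $\rho$ is joined to the $D+1-r$ (resp.\ $D+2-r$) non-deficient vertices, with a matching (resp.\ the path $a_1,b_1,\dots,b_r,a_2$ plus a matching) on the remaining vertices; this is the same graph as your ``complete graph minus a sparse subgraph, plus $\rho$ joined to the deficient vertices.'' The only difference is the connectivity check: the paper counts component sizes (every component contains a non-root vertex of degree $D$, hence has at least $D+1$ vertices, and two such components would need more than $D+3$ vertices), whereas you use a common-neighbour argument, and both are valid.
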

\begin{proof}
	We construct a graph $F$ whose complement is $J_{D,r}$.
	
If $r$ is even, take $W=A\mathbin{\dot\cup}B$, the disjoint union of
		$A$ and $B$, with $|A|=D+1-r$ and $|B|=r$, and set
		$V(J_{D,r})=V(F)=W\cup\{\rho\}$. In $F$, join $\rho$ to
	every vertex of $A$, put a perfect matching on $B$, and add no other
	edges. Both required cardinalities are even. Every vertex in $W$ has
	$F$-degree one, and $\deg_F(\rho)=D+1-r$. Since the total number of
	vertices is $D+2$, complementation gives the required degrees.
	
 If $r$ is odd, take $W=A\mathbin{\dot\cup}B$ with $|A|=D+2-r$ and $|B|=r$, and set
		$V(J_{D,r})=V(F)=W\cup\{\rho\}$. Now $|A|$ is even and at least two. Choose
	$a_1,a_2\in A$, enumerate $B=\{b_1,\ldots,b_r\}$, and put the path
	\[
	a_1,b_1,\ldots,b_r,a_2
	\]
	on $W$, together with a perfect matching on $A\setminus\{a_1,a_2\}$.
	The matching may be empty. Adjoin $\rho$ and join it to all of $A$.
	Every vertex of $W$ now has $F$-degree two, whereas
	$\deg_F(\rho)=D+2-r$. There are $D+3$ vertices, so the complement again
	has root degree $r$ and all other degrees $D$.
	
	In either construction the root has positive degree. Every component of
	$J_{D,r}$ therefore contains a nonroot vertex and hence at least $D+1$
	vertices. Two components would require at least $2(D+1)>D+3$ vertices.
	Thus $J_{D,r}$ is connected.
\end{proof}

\begin{proof}[Proof of Proposition~\ref{thm:reduction}]
	The implications \textup{(i)}$\Rightarrow$\textup{(ii)}$\Rightarrow$
	\textup{(iii)} are immediate. Assume \textup{(iii)}, and fix a finite
	connected simple graph $H$ and distinct edges $e,f\in E(H)$.
	For an odd integer $D>\max\{3,\Delta(H)\}$, where $\Delta(H)$ is the
	maximum degree of $H$, set
	\[
	\kappa_v:=D-\deg_H(v)\qquad(v\in V(H)).
	\]
	At each vertex $v$, attach a disjoint copy of $J_{D,\kappa_v}$ by
	identifying its root with $v$. Denote the resulting graph by
	\begin{equation}\label{eq:regularization}
	R_D(H):=H\mathop{\vee}_{v\in V(H)}J_{D,\kappa_v}.
	\end{equation}
	Each original vertex has degree $\deg_H(v)+\kappa_v=D$; every other
	vertex has degree $D$ by Lemma~\ref{lem:gadget}. The graph is finite,
	connected, and simple, and $H$ remains an induced subgraph.
	
	Repeated use of Lemma~\ref{lem:one-sum} gives the product decomposition
	\begin{equation}\label{eq:regularization-law}
	\mu_{R_D(H),p,Q}
	=\mu_{H,p,Q}\otimes\bigotimes_{v\in V(H)}\mu_{J_{D,\kappa_v},p,Q}.
	\end{equation}
	In particular, the marginal law of all original edges is unchanged, and
	\begin{equation}\label{eq:regularization-cov}
	\Cov_{\mu_{R_D(H),p,Q}}(X_e,X_f)
	=\Cov_{\mu_{H,p,Q}}(X_e,X_f).
	\end{equation}
	Likewise, a simple path with endpoints in $H$ cannot enter an attached
	graph and return: it would have to revisit its root. Hence every shortest
	path between original vertices lies in $H$, giving
	\begin{equation}\label{eq:regularization-distance}
	\dist_{R_D(H)}(e,f)=\dist_H(e,f).
	\end{equation}
	
	Choose $D_n=2n+2\max\{3,\Delta(H)\}+1$, let $G_n=R_{D_n}(H)$, and set
	$r_n=\dist_H(e,f)$. Then $D_n\to\infty$, $r_n\log D_n=o(D_n)$, and
	\eqref{eq:regularization-distance} makes the pair admissible in
	\textup{(iii)}. Applying that statement and then
	\eqref{eq:regularization-cov} proves \textup{(i)}.
\end{proof}

The construction also explains its incompatibility with our geometric
hypothesis. Let $S_{D,v}$ be the nonroot vertices of one attached gadget.
Then
\begin{equation}\label{eq:gadget-cut}
D+1\leq|S_{D,v}|\leq D+2,
\qquad |\partial_{R_D(H)}S_{D,v}|=\kappa_v\leq D.
\end{equation}
Since $H$ has at least three vertices,
$|V(R_D(H))|\geq3(D+2)$, so $|S_{D,v}|<|V(R_D(H))|/2$.
For any fixed $r_0\geq0$ and $h>0$, put
$L_0=\lceil16(r_0+2)/h\rceil$. For sufficiently large $D$,
$|S_{D,v}|>L_0$, and
\[
h\bigl(|S_{D,v}|+D\min\{|S_{D,v}|,L_0\}\bigr)
\geq hDL_0\geq16D(r_0+2)>|\partial S_{D,v}|.
\]
Thus the regularized graphs fail \eqref{eq:two-scale-isoperimetry}, even
at a fixed distance scale. Furthermore, an original edge and an edge in
an attached gadget are independent by \eqref{eq:regularization-law}.
Strict negative correlation therefore cannot be inferred from high degree
alone.


\section{Partition functions and polymers}\label{sec:polymers}
The identities in this section are finite and exact. They require no
asymptotic assumption. We use the fixed-graph notation of
Section~\ref{subsec:conventions}.

\subsection{Constrained partition functions}\label{subsec:constrained}
For $D\subseteq E$ with $|D|\leq2$, let $\EE_D$ denote expectation under
$\PP_p$ conditioned on $D\subseteq\omega$. Define
\begin{equation}\label{eq:constrained-Z}
\Zhat_D(Q):=\EE_D[Q^{k(\omega)-1}]
=\sum_{\eta\subseteq E\setminus D}
p^{|\eta|}s^{|E\setminus D|-|\eta|}Q^{k(\eta\cup D)-1}.
\end{equation}
The fully open configuration gives $\Zhat_D(Q)>0$ for $Q\in[0,1)$.
For $D=\varnothing$ this is \eqref{eq:extended-Z}. Factoring out the
probability of the forced edges yields
\begin{equation}\label{eq:edge-probabilities}
\mu_{G,p,Q}(X_e=1)=p\frac{\Zhat_{\{e\}}}{\Zhat_\varnothing},
\qquad
\mu_{G,p,Q}(X_e=X_f=1)=p^2\frac{\Zhat_{\{e,f\}}}{\Zhat_\varnothing}.
\end{equation}
We suppress the argument $Q$ when there is no ambiguity. Hence
\begin{equation}\label{eq:covariance-Z}
\Cov_{\mu_{G,p,Q}}(X_e,X_f)
=\frac{p^2}{\Zhat_\varnothing^2}
\bigl(\Zhat_{\{e,f\}}\Zhat_\varnothing
-\Zhat_{\{e\}}\Zhat_{\{f\}}\bigr).
\end{equation}
Set
\begin{equation}\label{eq:log-correlation}
\Lhat_Q(e,f):=
\log\Zhat_{\{e,f\}}+\log\Zhat_\varnothing
-\log\Zhat_{\{e\}}-\log\Zhat_{\{f\}}.
\end{equation}
All logarithms here are real. Equations~\eqref{eq:edge-probabilities} and
\eqref{eq:covariance-Z} imply
\begin{align}
\Lhat_Q(e,f)
&=\log\frac{\mu_{G,p,Q}(X_e=X_f=1)}
{\mu_{G,p,Q}(X_e=1)\mu_{G,p,Q}(X_f=1)},
\label{eq:log-correlation-prob}\\
\sgn\Cov_{\mu_{G,p,Q}}(X_e,X_f)&=\sgn\Lhat_Q(e,f).
\label{eq:sign-equivalence}
\end{align}
It therefore suffices to determine the sign of the logarithmic difference.

\subsection{Separating configurations without a giant component}\label{subsec:giant}
Let $\mathcal K_-(\omega)$ be the family of vertex sets of the open
components with at most $N/2$ vertices, and put
\[
k_-(\omega):=|\mathcal K_-(\omega)|,
\qquad
\cD:=\{\omega:\text{every open component has size at most }N/2\}.
\]
On $\cD^c$ there is exactly one component larger than $N/2$, so
$k=k_-+1$. On $\cD$ one has $k=k_-\geq2$.
With $\tau=1-Q$, define
\begin{equation}\label{eq:TQ}
T_Q(\omega):=
\begin{cases}
Q^{k_-(\omega)-1},&\omega\in\cD,\\
0,&\omega\notin\cD.
\end{cases}
\end{equation}
The exponent in the first case is positive. Thus this definition is valid
also at $Q=0$, and pointwise
\begin{equation}\label{eq:component-decomposition}
Q^{k(\omega)-1}=Q^{k_-(\omega)}+\tau T_Q(\omega).
\end{equation}
Writing
\begin{equation}\label{eq:YR}
Y_D:=\EE_D[Q^{k_-(\omega)}],\qquad R_D:=\EE_D[T_Q(\omega)],
\end{equation}
we obtain the exact decomposition
\begin{equation}\label{eq:Z-Y-R}
\Zhat_D=Y_D+\tau R_D.
\end{equation}
Moreover, $Y_D\geq p^{|E|-|D|}>0$, because the fully open configuration
has $k_-=0$. The remainder is nonnegative and supported on $\cD$.

\subsection{The signed polymer representation}\label{subsec:polymer-identity}
Let $\cP$ be the set of all nonempty $S\subseteq V$ such that $G[S]$ is
connected; its elements are called \emph{polymers}. Two polymers $S,T\in\cP$ are \emph{compatible}, written
$S\sim T$, if they are disjoint and there is no edge of $G$ between them.
Otherwise write $S\not\sim T$. In particular, a polymer is incompatible
with itself.

For $H\subseteq E(S)$, let $c(S,H)$ be the number of components of
$(S,H)$, and define
\[
\chi_-(S,H):=
\one_{\{\text{every component of }(S,H)\text{ has at most }N/2
	\text{ vertices}\}}.
\]
Set $c(\varnothing,\varnothing)=0$ and
$\chi_-(\varnothing,\varnothing)=1$. 
For $D_S:=D\cap E(S)$, define the \emph{activity} of a polymer $S\in\cP$ by
\begin{equation}\label{eq:activity}
w_D(S):=
\begin{cases}
\displaystyle s^{|\partial S|}
\sum_{\substack{H\subseteq E(S)\\D_S\subseteq H}}
\chi_-(S,H)(Q-1)^{c(S,H)}p^{|H|-|D_S|}s^{e(S)-|H|},
&D\cap\partial S=\varnothing,\\[3mm]
0,&D\cap\partial S\neq\varnothing.
\end{cases}
\end{equation}
 These activities need not be positive, and we occasionally write $w=w_\varnothing$. The cutoff applies to the components of $(S,H)$, not to $S$ itself. In particular, polymers with $|S|>N/2$ may still contribute if all components of $(S,H)$ have size at most $N/2$, and hence cannot be discarded.

\begin{lemma}[Exact polymer identity]\label{lem:polymer-identity}
	For every $D\subseteq E$ with $|D|\leq2$ and every $Q\in[0,1)$,
	\begin{equation}\label{eq:polymer-Y}
	Y_D=\sum_{\substack{\mathscr S\subseteq\cP\\
			\mathscr S\ \mathrm{pairwise\ compatible}}}
	\prod_{S\in\mathscr S}w_D(S).
	\end{equation}
	The empty family contributes one.
\end{lemma}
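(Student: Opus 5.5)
The plan is to expand $Q^{k_-(\omega)}$ by writing $Q=1+(Q-1)$ separately for each small component, and then to regroup the resulting terms according to the connected components of the union of the chosen small components.

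\textbf{Step 1: expand over small components.} For each configuration $\omega\supseteq D$ we have
\[
Q^{k_-(\omega)}=\prod_{C\in\mathcal K_-(\omega)}\bigl(1+(Q-1)\bigr)
=\sum_{\mathcal A\subseteq\mathcal K_-(\omega)}(Q-1)^{|\mathcal A|}.
\]
Hence
\[
Y_D=\sum_{\omega\supseteq D}p^{|\omega|-|D|}s^{|E\setminus\omega|}
\sum_{\mathcal A\subseteq\mathcal K_-(\omega)}(Q-1)^{|\mathcal A|}.
\]
This expression is a finite sum, and at $Q=0$ it remains consistent with $0^0=1$.

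\textbf{Step 2: reorganize by the support.} Fix $\mathcal A$ and let $U=\bigcup_{C\in\mathcal A}C$. The sets in $\mathcal A$ are vertex sets of open components, so they are pairwise disjoint. Decompose $G[U]$ into its connected components $S_1,\dots,S_m$; each $S_j$ is a polymer. Because any two distinct $S_j$ lie in different components of $G[U]$, there is no edge of $G$ between them, so $\{S_j\}$ is a pairwise compatible family.

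Each component $C\in\mathcal A$ is connected in $G$, hence lies inside a single $S_j$. For each $j$, the configuration $\omega$ satisfies three conditions:
\begin{itemize}
\item its restriction $H_j:=\omega\cap E(S_j)$ has, as components on $S_j$, exactly the sets $C\in\mathcal A$ with $C\subseteq S_j$;
\item every boundary edge in $\partial S_j$ is closed, since each $C$ is a full open component;
\item the conditions $\chi_-(S_j,H_j)=1$ and $c(S_j,H_j)=|\{C\in\mathcal A:C\subseteq S_j\}|$ hold.
\end{itemize}

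Conversely, take a compatible family $\{S_j\}$ and, for each $j$, a set $H_j\subseteq E(S_j)$ with all components of $(S_j,H_j)$ of size at most $N/2$. Take any $\omega$ containing $D$, containing $\bigcup_j H_j$, with $\partial S_j$ closed for every $j$, and arbitrary elsewhere. Then the components of the $(S_j,H_j)$ are open components of $\omega$ of size at most $N/2$, and they determine $\mathcal A$. This gives a bijection between pairs $(\omega,\mathcal A)$ and tuples $(\{S_j\},(H_j),\omega|_{\text{rest}})$.

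Two points need checking in this bijection.
\begin{itemize}
\item The components of $G[U]$ must be recovered exactly as the $S_j$. Since $U=\bigcup_j S_j$, compatibility ensures $G[U]$ has no edges joining different $S_j$, and each $S_j$ is connected.
\item The constraint $D\subseteq\omega$ must be handled. If an edge of $D$ lies in some $\partial S_j$, then no valid $\omega$ exists, which matches $w_D(S_j)=0$. Otherwise each edge of $D$ lies either in some $E(S_j)$, which forces $D_{S_j}\subseteq H_j$, or in the rest.
\end{itemize}

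\textbf{Step 3: factorize.} The edge set splits into three disjoint parts: $\bigcup_j E(S_j)$, $\bigcup_j\partial S_j$, and the remainder. The edge sets $\partial S_j$ are pairwise disjoint, because an edge lying in both $\partial S_i$ and $\partial S_j$ would join $S_i$ to $S_j$ or would have an endpoint in both, which compatibility excludes.

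Summing the free remainder edges (excluding those forced open in $D$) gives a factor of $1$. The boundary edges give $\prod_j s^{|\partial S_j|}$. The internal edges give $\prod_j p^{|H_j|-|D_{S_j}|}s^{e(S_j)-|H_j|}$, with the forced edges in $D$ normalized out. Together with the sign factor $(Q-1)^{\sum_j c(S_j,H_j)}$, this yields exactly $\prod_j w_D(S_j)$, which proves \eqref{eq:polymer-Y}.

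The main care point is this bookkeeping of $D$ and of the $\chi_-$ cutoff. The cutoff applies to the components $C$, not to $S_j$, which is why $\chi_-(S,H)$ appears inside the definition of $w_D$ rather than as a size restriction on polymers.
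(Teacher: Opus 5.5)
Your proof is correct and follows the paper's argument: you expand $Q^{k_-}=\sum_{\mathcal A}(Q-1)^{|\mathcal A|}$, regroup by the union $U$ of the selected components (whose boundary is closed), and factor over the components of $G[U]$, with a forced edge in some $\partial S_j$ matching $w_D(S_j)=0$. The only difference is organizational: the paper first writes $Y_D=\sum_{U\subseteq V}b_D(U)$ and then factors $b_D(U)=\prod_i w_D(S_i)$, whereas you merge both steps into one bijection. In the converse direction of that bijection you should state $\omega\cap E(S_j)=H_j$ exactly, rather than only that $\omega$ contains $H_j$.
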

\begin{proof}
	Expand once for every small open component:
	\begin{equation}\label{eq:select-components}
	Q^{k_-(\omega)}
	=\prod_{K\in\mathcal K_-(\omega)}(1+(Q-1))
	=\sum_{\mathscr A\subseteq\mathcal K_-(\omega)}(Q-1)^{|\mathscr A|}.
	\end{equation}
	For a selected subfamily, let $U$ be the union of its component vertex
	sets. Every edge of $\partial U$ is closed. A contribution is therefore
	impossible if $D\cap\partial U\neq\varnothing$. Otherwise, if
	$H=\omega\cap E(U)$, the selected components are exactly the components
	of $(U,H)$, all of size at most $N/2$. Summation over edges entirely
	outside $U$ gives one under the conditioned Bernoulli law. Consequently,
	\begin{equation}\label{eq:bD-sum}
	Y_D=\sum_{U\subseteq V}b_D(U),
	\end{equation}
	where $b_D(U)$ is the expression on the right of \eqref{eq:activity}
	with $S$ replaced by $U$, without requiring $G[U]$ to be connected.
	The conventions above give $b_D(\varnothing)=1$.
	
	Let $S_1,\ldots,S_j$ be the vertex sets of the connected components of
	$G[U]$. They are pairwise compatible. There are no graph edges between
	them, so boundaries, internal edge counts, and component counts add, while
	$\chi_-$ factors. The internal edge sums in $b_D(U)$ therefore separate:
	\begin{equation}\label{eq:bD-factorization}
	b_D(U)=\prod_{i=1}^j w_D(S_i).
	\end{equation}
	If a forced edge crosses $\partial U$, both sides are zero; otherwise the
	factorization follows term by term. Finally, the map sending $U$ to the
	components of $G[U]$ is a bijection from subsets of $V$ to pairwise
	compatible polymer families, with inverse given by union. Substituting
	\eqref{eq:bD-factorization} into \eqref{eq:bD-sum} proves the identity.
\end{proof}


\section{Activity estimates and cluster expansions}\label{sec:estimates}
From now on the graph satisfies \eqref{eq:two-scale-isoperimetry}, and
$n$ is sufficiently large for \eqref{eq:L-range} to hold. All estimates
are uniform in $Q\in[0,1)$ and in the forcing set $D$ with $|D|\leq2$.
The factor $\tau=1-Q$ will be kept explicit.

\subsection{Connected sets with a prescribed boundary}\label{subsec:cut-count}
For a graph $J$, a vertex $v$, and an integer $b\geq0$, let
\[
N_J(v,b):=\#\{S\subseteq V(J):v\in S,\ 1\leq|S|\leq|V(J)|/2,
\ J[S]\text{ connected},\ |\partial_JS|=b\}.
\]
We use the following connected-cut count.

\begin{lemma}[Carlson--Davies--Fraiman--Kolla--Potukuchi--Yap]
	\label{lem:cut-count}
	There is an absolute constant $C_{\rm cut}>0$ such that, if $J$ is a
	finite $d$-regular graph with $d\geq2$ and
	$|\partial_JA|\geq\eta|A|$ for all $1\leq|A|\leq|V(J)|/2$, then
	\begin{equation}\label{eq:cut-count}
	N_J(v,b)\leq d^{C_{\rm cut}(1+1/\eta)b/d}.
	\end{equation}
\end{lemma}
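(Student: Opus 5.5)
Since this is Theorem~3 of \cite{CarlsonEtAl2024}, the plan is to quote it after checking that our hypotheses and normalization match: $J$ is $d$-regular, the expansion is $|\partial_JA|\geq\eta|A|$ for $|A|\leq|V(J)|/2$, and the count is of connected sets through a fixed vertex with exactly $b$ boundary edges. For completeness, I outline how I would rederive it with the Galvin/Sapozhenko container-and-encoding method.

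\emph{Size bound.} If $S$ is counted in $N_J(v,b)$, then expansion gives $|S|\leq b/\eta$. Let $S_{\rm bd}\subseteq S$ be the vertices having at least $d/2$ neighbours outside $S$. Since each such vertex contributes at least $d/2$ boundary edges, $|S_{\rm bd}|\leq 2b/d$. Every other vertex of $S$ has more than $d/2$ neighbours inside $S$.

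\emph{Skeleton.} Choose a small connected skeleton $T\subseteq S$ that reconstructs $S$. I would take $T=S_{\rm bd}\cup T_0$, where $T_0$ is a maximal $2$-separated subset of $S\setminus S_{\rm bd}$. Each vertex of $T_0$ has more than $d/2$ neighbours in $S$, and these neighbourhoods are essentially disjoint, so $|T_0|\lesssim |S|/d\leq b/(\eta d)$. Maximality makes $T\cup\{v\}$ connected in $J^{4}$, whose maximum degree is at most $d^{4}$. The number of connected subsets of size $t$ through $v$ in a graph of maximum degree $\Delta$ is at most $(e\Delta)^t$. Hence the number of possible skeletons is at most
\[
d^{O(t)},\qquad t\leq C\,(1+1/\eta)\,b/d .
\]

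\emph{Reconstruction.} Given $T$, the set $S$ lies inside the ball of radius $2$ around $T$. It should be determined by a majority rule: $x\in S$ iff $x$ has more than $d/2$ neighbours in a suitable set built from $N(T)$. Any exceptional vertices violating the rule are charged to boundary edges, with at most $O(b/d)$ of them. Each exception is specified by at most $O(\log d)$ bits, which costs a further $d^{O(b/d)}$. Multiplying the skeleton count by the exception cost yields \eqref{eq:cut-count}.

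\emph{Main obstacle.} The delicate step is keeping the exponent at $O(b/d)$, not $O(b\log d/d)$. A naive random-sampling container produces an extra $\log d$. Avoiding it requires the deterministic $2$-separated skeleton above, together with the fact that in a $d$-regular graph, vertices with most neighbours inside $S$ are forced into $S$ by the majority rule. This is precisely where the argument of \cite{CarlsonEtAl2024} does its real work, and in the paper I would simply cite it.
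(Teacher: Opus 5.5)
Your plan matches the paper's proof. There the lemma is established solely by citing \cite[Theorem~3]{CarlsonEtAl2024}, with the same normalization checks: their $\nabla(A)$ is $\partial_JA$, connectedness means $J[A]$ is connected, and the expansion hypothesis is the unnormalized $|\partial_JA|\geq\eta|A|$, not $\eta d|A|$. Your supplementary rederivation should not be offered as a proof, because its decisive reconstruction step is only asserted: the ``suitable set built from $N(T)$'' is never specified, and for natural choices the number of misclassified vertices is not $O(b/d)$ (for instance, vertices of $S$ at distance two from a sparse skeleton may have few neighbours in $N(T)$), so all of the real content rests on the citation, exactly as in the paper.
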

\begin{proof}
	This is \cite[Theorem~3]{CarlsonEtAl2024}. The boundary denoted by
	$\nabla(A)$ there is $\partial_JA$ here, and connectedness means that
	$J[A]$ is connected. The expansion parameter in that theorem is
	unnormalized: its hypothesis is $|\partial_JA|\geq\eta|A|$, not
	$\eta d|A|$.
\end{proof}

Our assumption implies both $|\partial S|\geq h|S|$ and
$|\partial S|\geq h(d+1)$ for nonempty $S$ of size at most $N/2$.
Thus, with $C_h=C_{\rm cut}(1+1/h)$,
\begin{equation}\label{eq:cut-count-exponential}
N_G(v,b)\leq\exp\left(C_h\frac{\log d}{d}\,b\right).
\end{equation}

\begin{lemma}[Boundary summability]\label{lem:boundary-sum}
	For every fixed $\xi>0$, there is $c_\xi=c_\xi(\xi,h)>0$ such that
	\begin{equation}\label{eq:boundary-sum}
	\sup_{v\in V}\sum_{\substack{S\ni v,\ 1\leq|S|\leq N/2\\G[S]\text{ connected}}}
	e^{-\xi|\partial S|}\leq e^{-c_\xi d}
	\end{equation}
	for all sufficiently large $n$.
\end{lemma}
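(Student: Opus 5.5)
We group the sum in \eqref{eq:boundary-sum} by the boundary size $b=|\partial S|$ and use the connected-cut count \eqref{eq:cut-count-exponential}. Fix $v\in V$. Every set $S$ in the sum is connected, contains $v$ and satisfies $1\leq|S|\leq N/2$. By the two-scale hypothesis \eqref{eq:two-scale-isoperimetry} with $m_L(S)\geq1$, we have $|\partial S|\geq h(|S|+d)\geq hd$. So only values $b\geq b_0:=\lceil hd\rceil$ occur. The number of sets with $|\partial S|=b$ is at most $N_G(v,b)$, which gives
\[
\sum_{S}e^{-\xi|\partial S|}\leq\sum_{b\geq b_0}N_G(v,b)e^{-\xi b}
\leq\sum_{b\geq b_0}\exp\Bigl(-b\Bigl(\xi-C_h\frac{\log d}{d}\Bigr)\Bigr).
\]

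Since $d=d_n\to\infty$ and $C_h$ depends only on $h$, for all sufficiently large $n$ we have $C_h\log d/d\leq\xi/2$. Each term is then at most $e^{-\xi b/2}$. Summing the geometric series gives the bound
\[
\frac{e^{-\xi b_0/2}}{1-e^{-\xi/2}}\leq\frac{e^{-\xi hd/2}}{1-e^{-\xi/2}}.
\]
The denominator is a constant depending only on $\xi$. For large $d$ it is absorbed by halving the exponent, so the whole expression is at most $e^{-\xi hd/4}$. We may therefore take $c_\xi=\xi h/4$. The bound does not depend on $v$, so it also controls the supremum.

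The only point needing care is that \eqref{eq:cut-count-exponential} really applies to $G$. Lemma~\ref{lem:cut-count} requires a linear expansion $|\partial A|\geq\eta|A|$ with $\eta=h$, and this follows from \eqref{eq:two-scale-isoperimetry} as already noted before \eqref{eq:cut-count-exponential}. The count is then for sets of size at most $|V|/2$, which matches the range of the sum. Beyond this, the argument is a direct combination of the degree-scale lower bound on $b$ with the subexponential growth rate $d^{O(b/d)}$ of the count. There is no real obstacle, because the exponential decay $e^{-\xi b}$ beats the growth rate once $\log d/d\to0$.
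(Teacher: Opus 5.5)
Your proposal is correct and follows essentially the same argument as the paper. You group by boundary size, use the cut count \eqref{eq:cut-count-exponential} together with the lower bound $|\partial S|\geq h(d+1)$ coming from the two-scale hypothesis, sum the geometric series, and absorb the constant to get $c_\xi=\xi h/4$. Your threshold $\lceil hd\rceil$ in place of the paper's $\lceil h(d+1)\rceil$ makes no difference.
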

\begin{proof}
	Put $b_0=\lceil h(d+1)\rceil$. Grouping sets by their boundary size and
	using \eqref{eq:cut-count-exponential}, the sum is at most
	\[
	\sum_{b\geq b_0}
	\exp\left[-\left(\xi-C_h\frac{\log d}{d}\right)b\right]
	\leq\frac{e^{-\xi h(d+1)/2}}{1-e^{-\xi/2}}
	\]
	for large $d$. The prefactor is independent of $d$ and $v$, so the last
	expression is at most $e^{-\xi hd/4}$ for all sufficiently large $d$.
\end{proof}

\subsection{Small and large polymer activities}\label{subsec:activity-estimates}
\begin{lemma}[A crude activity bound]\label{lem:crude-activity}
	For every $S\in\cP$,
	\begin{equation}\label{eq:crude-activity}
	|w_D(S)|\leq\tau s^{|\partial S|}.
	\end{equation}
\end{lemma}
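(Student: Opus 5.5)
The bound follows by estimating the defining sum in \eqref{eq:activity} term by term, with no cancellation needed. There are two cases.

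First, if $D\cap\partial S\neq\varnothing$, then $w_D(S)=0$ by definition, and \eqref{eq:crude-activity} holds trivially.

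Otherwise, I will bound each summand by its absolute value.

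\begin{enumerate}
\item Since $S$ is a polymer, it is nonempty. Hence for every $H\subseteq E(S)$ the graph $(S,H)$ has at least one component, so $c(S,H)\geq1$.
\item Since $Q\in[0,1)$, we have $|Q-1|=\tau\in(0,1]$. Combined with the previous step, $|(Q-1)^{c(S,H)}|=\tau^{c(S,H)}\leq\tau$.
\item The indicator satisfies $0\leq\chi_-(S,H)\leq1$.
\end{enumerate}

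Therefore
\[
|w_D(S)|\leq\tau\, s^{|\partial S|}\sum_{\substack{H\subseteq E(S)\\ D_S\subseteq H}}p^{|H|-|D_S|}s^{e(S)-|H|}.
\]

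The remaining sum ranges over the free edges $E(S)\setminus D_S$, and each such edge contributes either $p$ or $s$. It therefore factorizes as
\[
\prod_{g\in E(S)\setminus D_S}(p+s)=1,
\]
which is the total mass of Bernoulli$(p)$ percolation on $E(S)\setminus D_S$. This yields \eqref{eq:crude-activity}.

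There is no real obstacle here. The only point requiring care is $c(S,H)\geq1$, which is what produces the factor $\tau$ rather than $1$. This holds because polymers are nonempty; the empty-set convention $c(\varnothing,\varnothing)=0$ never enters, since $\varnothing\notin\cP$. The argument is uniform in $Q\in[0,1)$, including the endpoint $Q=0$, where $\tau=1$.
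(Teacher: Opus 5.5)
Your proposal is correct and follows essentially the same route as the paper's proof. In both, the case $D\cap\partial S\neq\varnothing$ is trivial, and otherwise you use $c(S,H)\geq1$ to get $|Q-1|^{c(S,H)}\leq\tau$, drop $\chi_-$, and sum the conditional Bernoulli weights to $(p+s)^{|E(S)\setminus D_S|}=1$.
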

\begin{proof}
If $D\cap\partial S\neq\varnothing$, then $w_D(S)=0$ by definition. Otherwise
	$c(S,H)\geq1$, so $|Q-1|^{c(S,H)}\leq\tau$. Drop $\chi_-$ and sum the
	remaining conditional Bernoulli weights over $H\supseteq D_S$. Their sum
	is $(p+s)^{|E(S)\setminus D_S|}=1$.
\end{proof}

We also record an elementary count used for small clusters.
\begin{lemma}\label{lem:connected-set-count}
	If $J$ has maximum degree at most $d\geq1$, then
	\begin{equation}\label{eq:connected-set-count}
	\#\{S\subseteq V(J):v\in S,\ |S|=k,\ J[S]\text{ connected}\}
	\leq(4d)^{k-1}\qquad(k\geq1).
	\end{equation}
\end{lemma}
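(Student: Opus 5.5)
We will prove \eqref{eq:connected-set-count} by injecting connected sets into rooted spanning trees and then counting those trees with a standard depth-first encoding. Fix $v$ and a set $S\ni v$ with $|S|=k$ and $J[S]$ connected. Since $J[S]$ is connected, it has a spanning tree $T_S$ on $k$ vertices. Fix one such tree for each $S$, say the breadth-first tree from $v$ with ties broken by a fixed ordering of $V(J)$. The vertex set of $T_S$ is $S$, so the map $S\mapsto T_S$ is injective. It therefore suffices to bound the number of subtrees of $J$ that contain $v$ and have exactly $k$ vertices.

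To count these trees, we encode each one by a closed depth-first walk from $v$ that traverses every tree edge exactly twice, once going down and once returning. Such a walk has length $2(k-1)$. We record it as a sequence of $2(k-1)$ steps. Each step is either ``return to the parent'' or ``go to a new child''; in the second case we also specify which neighbour in $J$ is chosen, using a label in $\{1,\dots,d\}$ from a fixed ordering of each vertex's at most $d$ neighbours. The walk determines the tree, so distinct trees give distinct encodings.

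We now bound the number of encodings. The pattern of down-steps and up-steps is a Dyck word of length $2(k-1)$, and there are at most $2^{2(k-1)}=4^{k-1}$ of these. Given the pattern, the $k-1$ down-steps each carry one of at most $d$ labels, which gives at most $d^{k-1}$ choices. Multiplying, the number of subtrees is at most $4^{k-1}d^{k-1}=(4d)^{k-1}$. The bound also holds at $k=1$, where the only set is $S=\{v\}$ and the count is $1$.

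No step here is a real obstacle. The one point that needs care is injectivity of the encoding. It holds because the walk is reconstructed step by step from $v$: a down-step together with its label determines the child reached, and an up-step is forced since the parent is unique. Labels that are not allowed, such as neighbours already visited or indices exceeding the actual degree, only shrink the set of valid encodings, so the upper bound is unaffected.
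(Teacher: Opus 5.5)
Your proposal is correct and is essentially the paper's argument. The paper also uses depth-first search to attach a rooted ordered spanning tree to each set, bounds the number of ordered tree shapes by the Catalan bound $4^{k-1}$ (your Dyck words), and allows at most $d$ neighbour choices for each non-root vertex, with invalid or non-injective choices only enlarging the count. First fixing a breadth-first spanning tree and then encoding it by a depth-first walk is only a cosmetic difference.
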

\begin{proof}
	Fix neighbor orders, and use depth-first search to assign a rooted ordered
	spanning tree to each set. There are at most $4^{k-1}$ rooted ordered tree
	shapes on $k$ vertices, by the Catalan bound
	\cite[Chapter~1]{Stanley2015}. With the root fixed at $v$, each successive
	vertex has at most $d$ possible images. Allowing noninjective images only
	increases the count, proving the claim.
\end{proof}

Regularity and simplicity give, for $k=|S|$,
\begin{equation}\label{eq:boundary-degree}
|\partial S|=d|S|-2e(S)\geq k(d-k+1).
\end{equation}
For the remainder of the proof put
\begin{equation}\label{eq:lambda-theta}
\lambda:=\frac{ah}{8}>0,\qquad \theta:=s^{1/2}=e^{-a/2},
\qquad \cP_{\leq}:=\{S\in\cP:|S|\leq N/2\}.
\end{equation}

\begin{lemma}[Small polymers]\label{lem:small-activity}
	There is $c>0$ such that
	\begin{equation}\label{eq:small-weighted}
	\sup_{v\in V}\sum_{\substack{S\in\cP_{\leq}\\v\in S}}
	|w_D(S)|e^{\lambda|S|+\lambda d m_L(S)}\leq\tau e^{-cd}
	\end{equation}
	for all sufficiently large $n$.
\end{lemma}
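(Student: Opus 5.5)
The plan is to combine the crude bound of Lemma~\ref{lem:crude-activity} with the isoperimetric hypothesis \eqref{eq:two-scale-isoperimetry}. The point is to sacrifice a fixed fraction of the boundary factor $s^{|\partial S|}=e^{-a|\partial S|}$ to pay for the weight $e^{\lambda|S|+\lambda d m_L(S)}$. The remaining part is then summed with Lemma~\ref{lem:boundary-sum}.

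First, for $S\in\cP_{\leq}$, Lemma~\ref{lem:crude-activity} gives
\[
|w_D(S)|e^{\lambda|S|+\lambda dm_L(S)}\leq\tau\exp\bigl(-a|\partial S|+\lambda(|S|+dm_L(S))\bigr).
\]
Since $1\leq|S|\leq N/2$, the hypothesis \eqref{eq:two-scale-isoperimetry} gives $|S|+dm_L(S)\leq|\partial S|/h$. With $\lambda=ah/8$, the exponent is therefore at most
\[
-a|\partial S|+\tfrac{a}{8}|\partial S|=-\tfrac{7a}{8}|\partial S|.
\]
So each weighted term is bounded by $\tau e^{-\xi|\partial S|}$ with $\xi=7a/8>0$, a constant depending only on $p$. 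This holds uniformly in $Q\in[0,1)$ and in $D$, because the crude bound is uniform.

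Second, summing over connected $S\ni v$ with $1\leq|S|\leq N/2$, Lemma~\ref{lem:boundary-sum} with this $\xi$ bounds the sum by $\tau e^{-c_\xi d}$ for all large $n$, uniformly in $v$. Taking $c=c_\xi$ proves \eqref{eq:small-weighted}. The elements of $\cP_{\leq}$ are exactly these connected sets, so no polymers are missed. Forced-edge cases with $D\cap\partial S\neq\varnothing$ contribute zero and only help.

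There is no serious obstacle. The only points to check are that the isoperimetric inequality is applied in the range $|S|\leq N/2$, where it is assumed, and that the absorbed loss $\lambda/h=a/8$ is strictly less than $a$. The latter leaves a positive $\xi$ independent of $n,Q,D$, as Lemma~\ref{lem:boundary-sum} requires. The constant $c$ then depends only on $p$ and $h$.
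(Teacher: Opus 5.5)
Your proposal is correct and is essentially the paper's own proof. You bound each summand by $\tau e^{-7a|\partial S|/8}$ via the crude activity bound and the two-scale inequality with $\lambda=ah/8$, then apply Lemma~\ref{lem:boundary-sum} with $\xi=7a/8$; this is exactly the paper's argument.
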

\begin{proof}
	The graph condition gives
	$\lambda(|S|+dm_L(S))\leq a|\partial S|/8$.
	By Lemma~\ref{lem:crude-activity}, the summand is therefore at most
	$\tau e^{-7a|\partial S|/8}$. Apply Lemma~\ref{lem:boundary-sum} with
	$\xi=7a/8$.
\end{proof}

\begin{lemma}[Large polymers]\label{lem:large-activity}
	For all sufficiently large $n$,
	\begin{equation}\label{eq:large-weighted}
	\sum_{\substack{S\in\cP\\|S|>N/2}}|w_D(S)|e^{\lambda|S|}
	\leq\tau e^{-\lambda N/4-\lambda dL}.
	\end{equation}
\end{lemma}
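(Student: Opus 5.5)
The plan is to avoid the crude bound of Lemma~\ref{lem:crude-activity}: for $S=V$ it gives only $|w_D(V)|\le\tau$, which cannot absorb $e^{\lambda N}$. Instead I will use the cutoff $\chi_-$ and apply the isoperimetric inequality to the components of $(S,H)$, which have at most $N/2$ vertices, rather than to $S$ itself.

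\emph{Step 1 (expanding the activity).} Fix $S\in\cP$ with $|S|>N/2$ and $D\cap\partial S=\varnothing$. Bound $|Q-1|^{c(S,H)}\le\tau$ as in Lemma~\ref{lem:crude-activity}. Then group the terms $H$ in \eqref{eq:activity} by the partition $\pi=\{K_1,\dots,K_j\}$ of $S$ into vertex sets of components of $(S,H)$. Each $K_i$ is connected in $G$ and has $|K_i|\le N/2$, because of $\chi_-$. For a fixed $\pi$, every edge of $E(S)$ joining two different blocks must be closed. Since forced edges are always open, forcing only removes terms, and the conditional Bernoulli weight of the event ``$\pi$ is the component partition'' is at most $s^{\frac12\sum_i|\partial_S K_i|}$. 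Multiplying by $s^{|\partial S|}$ and using $\partial_G K_i=\partial_S K_i\cup(\partial_GK_i\cap\partial S)$, the closed edges number at least $\frac12\sum_i|\partial_G K_i|$. This gives
\[
|w_D(S)|\le\tau\sum_{\pi}\prod_{K\in\pi}s^{|\partial K|/2}.
\]

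\emph{Step 2 (splitting the weight).} Write $s^{|\partial K|/2}=s^{|\partial K|/4}\cdot s^{|\partial K|/4}$. Apply \eqref{eq:two-scale-isoperimetry} to each block, which is legitimate since $|K|\le N/2$: one factor is at most $e^{-2\lambda(|K|+d\,m_L(K))}$. Over the partition, $\sum_K|K|=|S|$ and $\sum_K m_L(K)\ge\min\{|S|,L\}=L$, using $L<N/2<|S|$ from \eqref{eq:L-range}. Therefore
\[
|w_D(S)|e^{\lambda|S|}\le\tau e^{-\lambda|S|-2\lambda dL}\sum_\pi\prod_{K\in\pi}s^{|\partial K|/4}\le\tau e^{-\lambda N/2-2\lambda dL}\sum_\pi\prod_{K\in\pi}s^{|\partial K|/4}.
\]

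\emph{Step 3 (summing over $S$).} The pair $(S,\pi)$ is determined by $\pi$ alone, a family of disjoint connected sets each of size at most $N/2$. So the sum over $S$ of the remaining factor is at most the sum over all such families of $\prod_K s^{|\partial K|/4}$. Order the blocks by their minimal vertex and sum vertex by vertex; each vertex is either uncovered or is the minimal vertex of one block. This bounds the family sum by
\[
\prod_{v\in V}\Bigl(1+\sum_{K\ni v,\ |K|\le N/2}s^{|\partial K|/4}\Bigr)\le(1+e^{-cd})^N\le e^{Ne^{-cd}},
\]
using Lemma~\ref{lem:boundary-sum} with $\xi=a/4$. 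For large $d$ we have $e^{-cd}\le\lambda/4$, so the total is at most $\tau e^{-\lambda N/4-2\lambda dL}\le\tau e^{-\lambda N/4-\lambda dL}$, as required.

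The main obstacle is Step 1. Two points need care there. First, the crossing-edge count: each internal crossing edge must be counted once, which is where the factor $\tfrac12$ comes from, and the edges of $\partial S$ must be merged with the internal ones so that each block sees its full $G$-boundary. Second, the forced edges must not break the upper bound: forcing only deletes terms, and with the weight normalized by $p^{|H|-|D_S|}$ the remaining probabilities still sum to at most one. Once every block satisfies $|K|\le N/2$, the rest is a standard Peierls and polymer-counting argument.
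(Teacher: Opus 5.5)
Your proof is correct and follows the paper's argument. You group by the component partition $\pi$, bound $|w_D(S)|$ by $\tau\sum_\pi\prod_{K\in\pi}s^{|\partial K|/2}$, spend half of that exponent against the two-scale inequality using $\sum_K m_L(K)\geq L$, and sum the other half via Lemma~\ref{lem:boundary-sum} to obtain the factor $e^{Ne^{-cd}}$.

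The differences from the paper are cosmetic. You treat the forced edges inside the conditional Bernoulli measure, which avoids the paper's factor $p^{-2}$ and the later absorption condition $2\log(p^{-1})\leq\lambda dL$. You also bound the family sum by a product over vertices rather than by $\prod_{K\in\cP_{\leq}}(1+\rho(K))$.
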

\begin{proof}
	Fix $S$ with $|S|>N/2$. In \eqref{eq:activity}, group internal
	configurations according to their open-component vertex sets
	$K_1,\ldots,K_j$. The cutoff requires $K_i\in\cP_{\leq}$, these sets are
	disjoint, and they cover $S$. Let $b_{\rm int}$ count graph edges joining
	different blocks. The edges of $\partial S$ and these interblock edges
	are closed. Restoring the Bernoulli weights of the at most two forced
	edges costs at most $p^{-2}$. Dropping the internal connectivity and
	forcing requirements then gives
	\[
	|w_D(S)|\leq\tau p^{-2}
	\sum_{\substack{\{K_1,\ldots,K_j\}\text{ disjoint}\\
			K_i\in\cP_{\leq},\ \bigcup_iK_i=S}}
	s^{|\partial S|+b_{\rm int}}.
	\]
	Since $\sum_i|\partial K_i|=|\partial S|+2b_{\rm int}$ and $0<s<1$,
	\begin{equation}\label{eq:large-block-bound}
	|w_D(S)|\leq\tau p^{-2}
	\sum_{\substack{\{K_1,\ldots,K_j\}\text{ disjoint}\\
			K_i\in\cP_{\leq},\ \bigcup_iK_i=S}}
	\prod_i\theta^{|\partial K_i|}.
	\end{equation}
	The block families are unordered.
	
	For $K\in\cP_{\leq}$, define
	\begin{equation}\label{eq:rho}
	\rho(K):=e^{2\lambda|K|+2\lambda d m_L(K)}\theta^{|\partial K|}.
	\end{equation}
	The graph condition implies $\rho(K)\leq e^{-a|\partial K|/4}$.
	Lemma~\ref{lem:boundary-sum} and double counting therefore give
	\begin{equation}\label{eq:rho-sums}
	\sup_{v\in V}\sum_{K\ni v}\rho(K)\leq e^{-c_1d},
	\qquad \sum_{K\in\cP_{\leq}}\rho(K)\leq Ne^{-c_1d}.
	\end{equation}
	A block family covering $S$ satisfies
	$\sum_i|K_i|=|S|$ and $\sum_i m_L(K_i)\geq L$: either one block has
	size at least $L$, or the latter sum equals $|S|>N/2>L$.
	It follows that
	\[
	e^{\lambda|S|}\prod_i\theta^{|\partial K_i|}=e^{-\lambda |S|-2\lambda d\sum_i m_L(K_i) }\prod_i\rho(K_i)
	\leq e^{-\lambda N/2-2\lambda dL}\prod_i\rho(K_i).
	\]
	A block family determines its union $S$ uniquely. We may therefore sum
	\eqref{eq:large-block-bound} over $S$, remove the disjointness and union
	constraints, and expand a finite product to obtain
	\begin{align*}
	\sum_{|S|>N/2}|w_D(S)|e^{\lambda|S|}
	&\leq\tau p^{-2}e^{-\lambda N/2-2\lambda dL}
	\prod_{K\in\cP_{\leq}}(1+\rho(K))\\
	&\leq\tau p^{-2}
	\exp\{-\lambda N/2-2\lambda dL+Ne^{-c_1d}\}.
	\end{align*}
	For sufficiently large $d$, $e^{-c_1d}\leq\lambda/4$ and
	$2\log(p^{-1})\leq\lambda dL$. These inequalities prove
	\eqref{eq:large-weighted}.
\end{proof}

\subsection{Ueltschi's theorem  in the present notation}\label{subsec:ueltschi}

	For $S,T\in\cP$, let
	\begin{equation}\label{eq:def-pair-fcn}
	\zeta(S,T):=-\one_{\{S\not\sim T\}}.
	\end{equation}
	Then $\zeta$ is symmetric and $|1+\zeta(S,T)|\leq1$. For an ordered
	polymer tuple $\Gamma=(S_1,\ldots,S_j)\in\cP^j$, repetitions being
	allowed, let $I_\Gamma$ be the incompatibility graph on the index set
	$[j]$, with $\{i,k\}\in E(I_\Gamma)$ if and only if $S_i\not\sim S_k$.
	Define the Ursell coefficient
	\begin{equation}\label{eq:ursell}
	\phit(\Gamma):=
	\sum_{\substack{H\text{ connected}\\V(H)=[j]}}
	\prod_{\{i,k\}\in E(H)}\zeta(S_i,S_k),
	\end{equation}
	with $ \phit(S_1)=1$. Only connected spanning subgraphs of $I_\Gamma$ can contribute to the
	sum, and hence $ \phit(\Gamma)=0$ whenever $I_\Gamma$ is disconnected. Our normalization has no factorial in the
	definition of $ \phit$: Ueltschi's coefficient $\varphi$ in his equation
	(2) in \cite{Ueltschi2004} is $ \phit/j!$.

\begin{proposition}[Finite hard-core specialization of Ueltschi's theorem]
	\label{prop:ueltschi}
	Let $\mathcal A\subseteq\cP$, let $v:\mathcal A\to\mathbb C$, and suppose
	there is $b:\mathcal A\to[0,\infty)$ such that
	\begin{equation}\label{eq:KP}
	\sum_{\substack{T\in\mathcal A\\T\not\sim S}}
	|v(T)|e^{b(T)}\leq b(S)\qquad(S\in\mathcal A).
	\end{equation}
	Then the partition function
	\[
	Y_{\mathcal A}(v):=
	\sum_{\substack{\mathscr S\subseteq\mathcal A\\
			\mathscr S\ \mathrm{pairwise\ compatible}}}
	\prod_{S\in\mathscr S}v(S)
	\]
	satisfies $Y_{\mathcal A}(v)=\exp B_{\mathcal A}(v)$, where
	\begin{equation}\label{eq:abstract-expansion}
	B_{\mathcal A}(v)=
	\sum_{j\geq1}\frac1{j!}\sum_{(S_1,\ldots,S_j)\in\mathcal A^j}
	\phit(S_1,\ldots,S_j)\prod_{i=1}^jv(S_i)
	\end{equation}
	converges absolutely. Moreover, for every fixed $S_0\in\mathcal A$,
	\begin{equation}\label{eq:rooted-bound}
	\sum_{j\geq1}\frac1{(j-1)!}
	\sum_{(S_2,\ldots,S_j)\in\mathcal A^{j-1}}
	|\phit(S_0,S_2,\ldots,S_j)|\prod_{i=2}^j|v(S_i)|
	\leq e^{b(S_0)}.
	\end{equation}
\end{proposition}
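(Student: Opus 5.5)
This is a direct specialization of \cite[Theorem~1]{Ueltschi2004} and its rooted bound, equation~(4) there. The plan is to set up his abstract framework with a finite polymer space, check that his hypothesis is exactly \eqref{eq:KP}, and then rewrite his conclusions in our normalization of the Ursell coefficient.

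\textbf{Setting up Ueltschi's framework.}
\begin{enumerate}
\item Take as the polymer space the finite set $\mathcal A$ with counting measure. Use the complex weight $v$ and the pair function $\zeta(S,T)=-\one_{\{S\not\sim T\}}$ of \eqref{eq:def-pair-fcn}.
\item Ueltschi requires $|1+\zeta|\leq1$. This holds trivially, since $1+\zeta\in\{0,1\}$.
\item Because $\zeta(S,S)=-1$, the factor $\prod_{i<k}(1+\zeta(S_i,S_k))$ vanishes on any tuple with a repetition or an incompatible pair.
\item His partition function $\sum_j\frac1{j!}\sum_{\mathcal A^j}\prod_i v(S_i)\prod_{i<k}(1+\zeta(S_i,S_k))$ therefore counts each pairwise compatible family exactly $j!$ times. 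It thus equals $Y_{\mathcal A}(v)$, with the empty tuple giving~$1$.
\end{enumerate}

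\textbf{Verifying the hypothesis.} Ueltschi's criterion is
\[
\sum_{T}|v(T)|e^{b(T)}|\zeta(S,T)|\leq b(S)\qquad\text{for all }S.
\]
Since $|\zeta(S,T)|=\one_{\{S\not\sim T\}}$, this is literally \eqref{eq:KP}.

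\textbf{Deducing the conclusions.}
\begin{itemize}
\item His theorem gives $Y_{\mathcal A}(v)=\exp\sum_{j\ge1}\sum_{\mathcal A^j}\varphi(S_1,\dots,S_j)\prod_i v(S_i)$, with absolute convergence.
\item His Ursell function is $\varphi=\phit/j!$, where $\phit$ is the connected-graph sum \eqref{eq:ursell}. Substituting this gives \eqref{eq:abstract-expansion}.
\item One must confirm the convention that $\phit$ sums over connected spanning graphs with edge weights $\zeta$. This matches his equation~(2) up to the stated factorial.
\end{itemize}

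\textbf{The rooted bound.} Ueltschi's equation~(4) reads
\[
1+\sum_{j\ge2}\sum_{(S_2,\dots,S_j)}|\varphi(S_0,S_2,\dots,S_j)|\,j\prod_{i\geq2}|v(S_i)|\leq e^{b(S_0)},
\]
where the $j=1$ term is $1=\phit(S_0)$. Converting with $j\,|\varphi|=j\,|\phit|/j!=|\phit|/(j-1)!$ yields exactly \eqref{eq:rooted-bound}.

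\textbf{Main obstacle.} The only delicate points are bookkeeping. First, the normalization of the rooted sum must be correct: the factor $j$ against the ordering of the remaining $j-1$ polymers. Second, the theorem must apply to a signed complex $v$ on a finite discrete space with a symmetric $\zeta$ taking value $-1$ on the diagonal.

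Ueltschi allows a general measure space and complex weights, so no further limiting argument is needed. If any doubt remains about the precise form of equation~(4), I would instead rederive it by the standard tree-graph (Penrose) bound $|\phit|\le\sum_{\text{trees}}\prod|\zeta|$. Combining that with an induction on tree depth using \eqref{eq:KP} gives $e^{b(S_0)}$ directly.
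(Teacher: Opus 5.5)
Your proposal is correct and is essentially the paper's own proof. You realize $\mathcal A$ as a finite space carrying the complex measure $\sum_S v(S)\delta_S$, and use $1+\zeta(S,T)=\one_{\{S\sim T\}}$ to collapse Ueltschi's partition function to $Y_{\mathcal A}(v)$; you then identify his condition with \eqref{eq:KP} and convert $\varphi=\phit/j!$, so that $j\,|\varphi|=|\phit|/(j-1)!$, to obtain \eqref{eq:abstract-expansion} and \eqref{eq:rooted-bound}. The one item you leave implicit is Ueltschi's integrability requirement, which is automatic because $\mathcal A$ is finite; the paper dispatches it the same way.
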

	\begin{proof}
		Apply \cite[Theorem~1, equations~(3)--(4)]{Ueltschi2004} to the finite
		discrete space $\mathcal A$, equipped with the complex measure
		\[
		\mathfrak m_v:=\sum_{S\in\mathcal A}v(S)\delta_S,
		\]
		and with pair function $\zeta$ given by \eqref{eq:def-pair-fcn}. Its total
		variation is
		\[
		|\mathfrak m_v|=\sum_{S\in\mathcal A}|v(S)|\delta_S.
		\]
		Thus Ueltschi's condition~(3) is precisely the Koteck\'y--Preiss condition \eqref{eq:KP} \cite{KoteckyPreiss1986}, while the
		additional integrability condition is automatic since $\mathcal A$ is
		finite.
		
		We first identify Ueltschi's partition function with
		$Y_{\mathcal A}(v)$. In the present discrete setting, his equation~(1) in \cite{Ueltschi2004}
		becomes
		\[
		1+\sum_{j\geq1}\frac1{j!}
		\sum_{(S_1,\ldots,S_j)\in\mathcal A^j}
		\prod_{i=1}^j v(S_i)
		\prod_{1\leq i<k\leq j}\bigl(1+\zeta(S_i,S_k)\bigr).
		\]
		Since
		\[
		1+\zeta(S,T)=\one_{\{S\sim T\}},
		\]
		the pair-factor product is nonzero exactly when
		$S_1,\ldots,S_j$ are pairwise compatible. In particular, repeated
		polymers give zero because $S\not\sim S$. Hence every nonzero term
		consists of $j$ distinct, pairwise compatible polymers. Each unordered
		compatible family $\mathcal S\subseteq\mathcal A$ of cardinality $j$
		occurs in exactly $j!$ orderings, which cancels the prefactor $1/j!$.
		Consequently, Ueltschi's partition function is
		\[
		\sum_{\substack{\mathcal S\subseteq\mathcal A\\
				\mathcal S\ {\rm pairwise\ compatible}}}
		\prod_{S\in\mathcal S}v(S)
		=Y_{\mathcal A}(v).
		\]
		
		Ueltschi's Theorem~1 in \cite{Ueltschi2004} therefore applies to $Y_{\mathcal A}(v)$. His
		coefficient $\varphi$ in equation~(2) is normalized by $1/j!$ relative to
		our Ursell coefficient:
		\[
		\varphi(S_1,\ldots,S_j)
		=\frac1{j!}\phit(S_1,\ldots,S_j).
		\]
		Substitution into his cluster expansion gives \eqref{eq:abstract-expansion},
		and his rooted estimate~(4) gives \eqref{eq:rooted-bound}. This proves
		the proposition.
	\end{proof}

\subsection{Verification for all activities used below}\label{subsec:KP-verification}
For $S\subseteq V$, write
\begin{equation}\label{eq:neighborhood}
\Nhood(S):=S\cup\{x\in V\setminus S:\{x,y\}\in E
\text{ for some }y\in S\}.
\end{equation}
Incompatibility implies $T\cap\Nhood(S)\neq\varnothing$, and
$|\Nhood(S)|\leq(d+1)|S|$.

\begin{lemma}[The original and forced-edge activities]\label{lem:KP-base}
	For all sufficiently large $n$, $w_D$ satisfies \eqref{eq:KP} on $\cP$
	with $b(S)=\lambda|S|$:
		\begin{equation}\label{eq:KP-wD}
		\sum_{\substack{T\in\cP\\T\not\sim S}}
		|w_D(T)|e^{\lambda|T|}
		\leq\lambda|S|,
		\qquad S\in\cP.
		\end{equation}
	Moreover, for some $c>0$,
	\begin{equation}\label{eq:base-root-sum}
	\sup_{v\in V}\sum_{\substack{S\in\cP\\S\ni v}}|w_D(S)|e^{\lambda|S|}
	\leq\tau e^{-cd}.
	\end{equation}
	This includes $w=w_\varnothing$.
\end{lemma}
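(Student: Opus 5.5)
The plan is to prove the rooted bound \eqref{eq:base-root-sum} first and then deduce \eqref{eq:KP-wD} from it by covering the incompatibility neighbourhood of $S$.

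\textbf{Step 1: the rooted bound.} Fix $v\in V$ and split the sum in \eqref{eq:base-root-sum} into polymers with $|S|\le N/2$ and polymers with $|S|>N/2$.

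For the small polymers, Lemma~\ref{lem:small-activity} applies directly. The weight $e^{\lambda|S|}$ is dominated by $e^{\lambda|S|+\lambda d m_L(S)}$, so this part contributes at most $\tau e^{-cd}$.

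For the large polymers, drop the restriction $S\ni v$ and invoke Lemma~\ref{lem:large-activity}. This gives at most $\tau e^{-\lambda N/4-\lambda dL}$. Since $L\ge1$ by \eqref{eq:L-range}, this is at most $\tau e^{-\lambda d}$.

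Adding the two parts and shrinking $c$ (for instance to $\min\{c,\lambda\}/2$, with $n$ large so that the factor $2$ is absorbed) yields \eqref{eq:base-root-sum}. The bound is uniform in $Q$ and in $D$ with $|D|\le2$, because both lemmas are.

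\textbf{Step 2: the Koteck\'y--Preiss condition.} Fix $S\in\cP$. If $T\not\sim S$, then $T$ meets $\Nhood(S)$. So the left side of \eqref{eq:KP-wD} is at most
\[
\sum_{x\in\Nhood(S)}\ \sum_{T\ni x}|w_D(T)|e^{\lambda|T|}\le |\Nhood(S)|\,\tau e^{-cd}\le (d+1)|S|\,e^{-cd},
\]
using Step~1 and $\tau\le1$. For $n$ large we have $(d+1)e^{-cd}\le\lambda$, which gives $\lambda|S|$.

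\textbf{Remaining checks.} The case $D=\varnothing$, i.e.\ $w=w_\varnothing$, is included because all cited lemmas hold for any $|D|\le2$. Polymers with $|S|>N/2$ are handled in the same way, since $\Nhood(S)$ still has at most $(d+1)|S|$ vertices.

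The only mildly delicate point is Step~1 for large polymers. There the per-vertex sum cannot use the boundary count in Lemma~\ref{lem:boundary-sum}, which is restricted to $|S|\le N/2$. This is why I bound it by the global estimate of Lemma~\ref{lem:large-activity}. That estimate is exponentially small in $dL$, which is more than enough. No further obstacle is expected.
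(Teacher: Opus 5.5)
Your proposal is correct and matches the paper's proof: you bound the rooted sum by splitting into $\mathcal P_{\leq}$ (Lemma~\ref{lem:small-activity}, with the tilt $e^{\lambda d m_L(S)}$ dropped) and $|S|>N/2$ (Lemma~\ref{lem:large-activity}), shrink $c$, and then cover the incompatible polymers by the at most $(d+1)|S|$ vertices of $\Nhood(S)$. The only cosmetic difference is that you extract the factor $e^{-\lambda d}$ for the large polymers from $L\geq1$, where the paper uses $N\geq d+1$; both work.
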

\begin{proof}
	For $|S|\leq N/2$, drop the additional nonnegative exponential weight
	from \eqref{eq:small-weighted}. For larger polymers use
	\eqref{eq:large-weighted} and $N\geq d+1$. After decreasing $c$, their
	sum gives \eqref{eq:base-root-sum}. Consequently,
	\begin{equation}\label{eq:KP-neighborhood-bound}
	\sum_{\substack{T\in\cP\\T\not\sim S}}|w_D(T)|e^{\lambda|T|}
	\leq\sum_{v\in\Nhood(S)}\sum_{T\ni v}|w_D(T)|e^{\lambda|T|}
	\leq(d+1)|S|\tau e^{-cd}.
	\end{equation}
	Since $\tau\leq1$ and $(d+1)e^{-cd}\to0$, the last quantity is at most
	$\lambda|S|$ for large $d$.
\end{proof}

Proposition~\ref{prop:ueltschi} now gives
\begin{equation}\label{eq:log-Y-expansion}
\log Y_D=\sum_{j\geq1}\frac1{j!}
\sum_{(S_1,\ldots,S_j)\in\cP^j}
\phit(S_1,\ldots,S_j)\prod_iw_D(S_i).
\end{equation}
The series is real and absolutely convergent. Its exponential is $Y_D>0$,
so it is the real logarithm, with no branch choice left unresolved.

\begin{lemma}[Exponentially weighted small-polymer activities]
	\label{lem:KP-tilted}
	Recall $\cP_{\leq}$ from \eqref{eq:lambda-theta}. 
	For $S\in\cP_{\leq}$, let
	\begin{equation}\label{eq:tilted-activity}
	\widetilde w_D(S):=e^{\lambda d m_L(S)}w_D(S).
	\end{equation}
	For all sufficiently large $n$, these activities satisfy \eqref{eq:KP}
	on $\cP_{\leq}$ with $b(S)=\lambda|S|$: 
		\begin{equation}\label{eq:KP-tilde-wD}
		\sum_{\substack{T\in\cP_{\leq}\\T\not\sim S}}
		|\widetilde w_D(T)|e^{\lambda|T|}
		\leq\lambda|S|,
		\qquad S\in\cP_{\leq}\,.
		\end{equation}
	Moreover, for some $c>0$,
	\begin{equation}\label{eq:tilted-root-sum}
	\sup_{v\in V}\sum_{\substack{S\in\cP_{\leq}\\S\ni v}}
	|\widetilde w_D(S)|e^{\lambda|S|}\leq\tau e^{-cd}.
	\end{equation}
\end{lemma}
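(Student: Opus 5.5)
The proof follows that of Lemma~\ref{lem:KP-base}, with the extra factor $e^{\lambda d m_L(S)}$ absorbed by the weight already present in Lemma~\ref{lem:small-activity}. We first establish the rooted bound \eqref{eq:tilted-root-sum}. For $S\in\cP_{\leq}$ we have $|\widetilde w_D(S)|e^{\lambda|S|}=|w_D(S)|e^{\lambda|S|+\lambda d m_L(S)}$. Hence, for every $v\in V$,
\[
\sum_{\substack{S\in\cP_{\leq}\\S\ni v}}|\widetilde w_D(S)|e^{\lambda|S|}
=\sum_{\substack{S\in\cP_{\leq}\\S\ni v}}|w_D(S)|e^{\lambda|S|+\lambda d m_L(S)}
\leq\tau e^{-cd}
\]
directly from \eqref{eq:small-weighted}, with the same constant $c$. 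No treatment of large polymers is needed, because the index set is restricted to $\cP_{\leq}$. This restriction matters: for $|S|>N/2$ the tilt $e^{\lambda dL}$ could not be compensated by Lemma~\ref{lem:large-activity}, since that lemma gives only $e^{-\lambda dL}$ together with a factor $e^{-\lambda N/4}$, and it is stated without the tilt.

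Next we derive the Koteck\'y--Preiss inequality \eqref{eq:KP-tilde-wD}. Fix $S\in\cP_{\leq}$. Any $T\in\cP_{\leq}$ with $T\not\sim S$ meets $\Nhood(S)$, and $|\Nhood(S)|\leq(d+1)|S|$. Covering the sum over such $T$ by the sums over $T\ni v$ for $v\in\Nhood(S)$ and applying the rooted bound gives
\[
\sum_{\substack{T\in\cP_{\leq}\\T\not\sim S}}|\widetilde w_D(T)|e^{\lambda|T|}
\leq\sum_{v\in\Nhood(S)}\sum_{\substack{T\in\cP_{\leq}\\T\ni v}}|\widetilde w_D(T)|e^{\lambda|T|}
\leq(d+1)|S|\,\tau e^{-cd}.
\]
Since $\tau\leq1$ and $(d+1)e^{-cd}\to0$, the right side is at most $\lambda|S|$ for all sufficiently large $d$, and hence for all sufficiently large $n$ by \eqref{eq:degree-limit}. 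The threshold is independent of $Q$, $D$ and $S$, because $c$ and $\lambda$ depend only on $p$ and $h$.

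The only point to check is that Lemma~\ref{lem:small-activity} already carries the full weight $e^{\lambda|S|+\lambda d m_L(S)}$, uniformly over $|D|\leq2$ and $Q\in[0,1)$. This holds by its statement: the isoperimetric inequality \eqref{eq:two-scale-isoperimetry} bounds $\lambda(|S|+dm_L(S))$ by $a|\partial S|/8$, so the tilt is paid for by the boundary factor $s^{|\partial S|}$. With that, the lemma reduces to bookkeeping, and I expect no genuine obstacle.
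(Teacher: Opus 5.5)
Your argument is correct and is the paper's proof: \eqref{eq:tilted-root-sum} is \eqref{eq:small-weighted} verbatim, and \eqref{eq:KP-tilde-wD} follows from the same covering of $\Nhood(S)$ used in \eqref{eq:KP-neighborhood-bound}. Your side remark that the tilt could not be absorbed for $|S|>N/2$ is inaccurate, although it plays no role in the proof: multiplying the bound of Lemma~\ref{lem:large-activity} by $e^{\lambda dL}$ leaves $\tau e^{-\lambda N/4}\leq\tau e^{-\lambda(d+1)/4}$, which is still of the required form.
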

\begin{proof}
	Equation~\eqref{eq:tilted-root-sum} is \eqref{eq:small-weighted}.
	The neighborhood argument in \eqref{eq:KP-neighborhood-bound}, now with
	$T\in\cP_{\leq}$, proves the criterion.
\end{proof}

\begin{lemma}[Size marking and restriction of the polymer family]
	\label{lem:KP-marker}
	
		Let $(\mathcal X,v)$ be either $(\cP,w_D)$ or
		$(\cP_{\leq},\widetilde w_D)$, and let
		$\mathcal A\subseteq\mathcal X$. For $u\in\mathbb C$, define
		\begin{equation}\label{eq:size-marked-activity}
		v_u(S):=u^{|S|}v(S),\qquad S\in\mathcal A.
		\end{equation}
		Uniformly over these choices, for all sufficiently large $n$, the
		following hold.
		
		\smallskip
		\noindent
		{\rm (i)} If $|u|\leq1$, then
		\begin{equation}\label{eq:KP-size-marked-unit}
		\sum_{\substack{T\in\mathcal A\\T\not\sim S}}
		|v_u(T)|e^{\lambda|T|}
		\leq\lambda|S|,
		\qquad S\in\mathcal A.
		\end{equation}
		
		\smallskip
		\noindent
		{\rm (ii)} If $|u|\leq R_*:=e^{\lambda/2}$, then
		\begin{equation}\label{eq:KP-size-marked-disk}
		\sum_{\substack{T\in\mathcal A\\T\not\sim S}}
		|v_u(T)|e^{\lambda|T|/2}
		\leq\frac{\lambda}{2}|S|,
		\qquad S\in\mathcal A.
		\end{equation}
		
		In particular, $Y_{\mathcal A}(v_u)$ has no zeros for $|u|\leq R_*$.
		The branch of its logarithm that vanishes at $u=0$ is given by
		\begin{equation}\label{eq:size-marked-cluster}
		\log Y_{\mathcal A}(v_u)
		=
		\sum_{j\geq1}\frac1{j!}
		\sum_{(S_1,\ldots,S_j)\in\mathcal A^j}
		\phit(S_1,\ldots,S_j)
		u^{\sum_i|S_i|}
		\prod_{i=1}^j v(S_i).
		\end{equation}
		The series converges absolutely and uniformly on $|u|\leq R_*$ and may
		be differentiated termwise on every smaller disk.	
\end{lemma}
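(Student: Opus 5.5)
The plan is to reduce both parts to the Koteck\'y--Preiss bounds already established in Lemmas~\ref{lem:KP-base} and~\ref{lem:KP-tilted}, and then to invoke Proposition~\ref{prop:ueltschi}. Restricting to a subfamily $\mathcal A\subseteq\mathcal X$ only removes nonnegative terms from the left side of \eqref{eq:KP}, so it suffices to bound sums over $T\in\mathcal X$ with $T\not\sim S$.

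For (i), $|v_u(T)|=|u|^{|T|}|v(T)|\leq|v(T)|$. Hence \eqref{eq:KP-size-marked-unit} follows directly from \eqref{eq:KP-wD} or \eqref{eq:KP-tilde-wD}.

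For (ii), $|u|\leq e^{\lambda/2}$ gives
\[
|v_u(T)|e^{\lambda|T|/2}\leq|v(T)|e^{\lambda|T|}.
\]
The left side of \eqref{eq:KP-size-marked-disk} is therefore at most the neighborhood sum in \eqref{eq:KP-neighborhood-bound}, which is bounded by $(d+1)|S|\tau e^{-cd}$ using the root sums \eqref{eq:base-root-sum} or \eqref{eq:tilted-root-sum}. The step needing care is that (ii) requires the constant $\lambda/2$ rather than $\lambda$, so the original criterion cannot simply be quoted. We must go back to the root-sum estimates and use $(d+1)e^{-cd}\leq\lambda/2$ for large $d$. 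Every bound involved is uniform in $Q$, $D$, $\mathcal A$ and the choice of $(\mathcal X,v)$, because the constants $c$ come from Lemmas~\ref{lem:small-activity} and~\ref{lem:large-activity}.

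For the analytic conclusions, apply Proposition~\ref{prop:ueltschi} with activity $v_u$ and $b(S)=\lambda|S|/2$, valid for every fixed $u$ with $|u|\leq R_*$.

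\begin{enumerate}
\item The proposition gives $Y_{\mathcal A}(v_u)=\exp B_{\mathcal A}(v_u)$, so $Y_{\mathcal A}(v_u)\neq0$.
\item Since $v_u(S_i)=u^{|S_i|}v(S_i)$, the series $B_{\mathcal A}(v_u)$ is exactly the right side of \eqref{eq:size-marked-cluster}.
\item For uniform absolute convergence, bound each term by its value at $|u|=R_*$. The resulting majorant series
\[
\sum_j\frac1{j!}\sum|\phit|\,R_*^{\sum|S_i|}\prod|v(S_i)|
\]
is finite. To see this, split off a distinguished polymer, sum over it with $\sum_{S_0}|v_{R_*}(S_0)|e^{\lambda|S_0|/2}$ (finite, since $\mathcal A$ is finite), and use the rooted bound \eqref{eq:rooted-bound}. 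The Weierstrass M-test then gives uniform convergence on the closed disk.
\item Each term is a polynomial in $u$, so the limit $B(u)$ is holomorphic on the open disk $|u|<R_*$ and continuous up to the boundary. By Weierstrass' theorem it may be differentiated termwise on every smaller disk.
\item $B(0)=0$, because every polymer is nonempty and so every term carries a positive power of $u$.
\item $B$ is continuous and satisfies $e^{B}=Y_{\mathcal A}(v_u)$ on the simply connected disk. It is therefore the branch of $\log Y_{\mathcal A}(v_u)$ vanishing at $u=0$.
\end{enumerate}

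I expect only two points to need care. The first is bookkeeping of the constants in (ii). The second is that the M-test majorant is finite; this follows from the rooted bound because $\mathcal A$ is finite.
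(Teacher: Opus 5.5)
Your proposal is correct and follows the paper's route: restriction drops only nonnegative terms, part (i) uses $|u|^{|T|}\le1$, part (ii) goes back to the root-sum bounds to get $(d+1)|S|\tau e^{-cd}\le\frac{\lambda}{2}|S|$, and the analytic conclusions come from Proposition~\ref{prop:ueltschi} with the majorant activity $R_*^{|S|}|v(S)|$. The one minor difference is that you rederive finiteness of the majorant series through the rooted bound, whereas the paper reads it off directly from the absolute convergence that Proposition~\ref{prop:ueltschi} already asserts at $u=R_*$.
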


\begin{proof}
	Restriction removes nonnegative summands from every absolute activity
	bound. Both choices of $v$ satisfy
	\[
	\sup_{x\in V}\sum_{\substack{T\in\mathcal A\\T\ni x}}
	|v(T)|e^{\lambda|T|}\leq\tau e^{-cd}
	\]
	by Lemmas~\ref{lem:KP-base} and~\ref{lem:KP-tilted}, with a common $c>0$.
	Part~\textup{(i)} follows by $|u|^{|T|}\leq1$. For part~\textup{(ii)},
	$|u|\leq e^{\lambda/2}$ implies
	\[
	|u|^{|T|}|v(T)|e^{\lambda|T|/2}
	\leq|v(T)|e^{\lambda|T|}.
	\]
	Consequently,
	\begin{equation}\label{eq:marked-KP-check}
	\sum_{\substack{T\in\mathcal A\\T\not\sim S}}
	|v_u(T)|e^{\lambda|T|/2}
	\leq(d+1)|S|\tau e^{-cd}
	\leq\frac{\lambda}{2}|S|
	\end{equation}
	for all sufficiently large $d$. This is precisely \eqref{eq:KP} for the
	modified activity and the stated control function. Symmetry,
	$|1+\zeta|\leq1$, and finite integrability are unchanged.
	
	Proposition~\ref{prop:ueltschi} applies. The series at the nonnegative
	majorant $R_*^{|S|}|v(S)|$ is an absolutely summable majorant for all
	$|u|\leq R_*$. Uniform convergence and analyticity on the open disk
	follow. Its exponential is $Y_{\mathcal A}(v_u)$, and its value at zero
	is zero since every polymer is nonempty. This identifies the logarithm
	and permits termwise differentiation on smaller disks.
\end{proof}

\begin{remark}\label{rem:formal-marker}
	Equation \eqref{eq:size-marked-cluster} also holds coefficientwise as a
		formal power-series identity. Here and below, $[u^m]F(u)$ denotes the
		coefficient of $u^m$ in the formal power series $F(u)$; this
		coefficient-extraction notation is distinct from the index-set notation
		$[j]=\{1,\ldots,j\}$. For fixed $m$, only tuples satisfying
		$\sum_i|S_i|=m$ contribute to $[u^m]$, and there are only finitely many
		such tuples. Hence coefficient extraction requires no convergence
		argument. The identity may also be obtained directly from the finite
		hard-core partition function by expanding the pair factors and applying
		the exponential formula to the connected components.
		Lemma~\ref{lem:KP-marker} is needed only for the analytic use of the
		expansion, where it provides absolute and locally uniform convergence
		and justifies termwise differentiation.

\end{remark}

\begin{table}[tbp]
	\centering\small
	\caption{Activities used in Ueltschi's theorem. Restrictions to a subfamily
		preserve each criterion.}\label{tab:KP-applications}
	\begin{tabular}{@{}p{0.33\textwidth}p{0.19\textwidth}p{0.18\textwidth}p{0.21\textwidth}@{}}
		\toprule
		Activity & Polymer space & Control $b(S)$ & Verification\\
		\midrule
		$w_D(S)$, including $w_\varnothing$ & $\cP$ & $\lambda|S|$ & Lemma~\ref{lem:KP-base}\\[1.5mm]
		$\widetilde w_D(S):=e^{\lambda d m_L(S)}w_D(S)$ & $\cP_{\leq}$ & $\lambda|S|$ & Lemma~\ref{lem:KP-tilted}\\[1.5mm]
		$u^{|S|}w_D(S)$, $|u|\leq R_*$ & $\mathcal A\subseteq\cP$ & $\lambda|S|/2$ & Lemma~\ref{lem:KP-marker}\\[1.5mm]
		$u^{|S|}\widetilde w_D(S)$, $|u|\leq R_*$ & $\mathcal A\subseteq\cP_{\leq}$ & $\lambda|S|/2$ & Lemma~\ref{lem:KP-marker}\\
		\bottomrule
	\end{tabular}
\end{table}

\subsection{Cluster tails and the exceptional event}\label{subsec:tails}

	For an ordered polymer tuple $\Gamma=(S_1,\ldots,S_j)$, define its
	weight, total size, and support by
	\begin{equation}\label{eq:cluster-notation}
	\wt_D(\Gamma):=\frac{\phit(S_1,\ldots,S_j)}{j!}\prod_iw_D(S_i),
	\quad M(\Gamma):=\sum_i|S_i|,
	\quad \Omega(\Gamma):=\bigcup_iS_i.
	\end{equation}
	Thus $M(\Gamma)$ counts vertices with multiplicity and may exceed
	$|\Omega(\Gamma)|$.

\begin{lemma}[Anchored cluster tails]\label{lem:cluster-tails}
	Let $F\subseteq V$ with $|F|\leq4$. There is $c>0$ such that, for all
	sufficiently large $n$,
	\begin{align}
	\sum_{\substack{\Gamma:\ \Omega(\Gamma)\cap F\neq\varnothing\\
			S_i\in\cP_{\leq},\ M(\Gamma)\geq L}}
	|\wt_D(\Gamma)|
	&\leq4\tau e^{-cd-\lambda dL},\label{eq:small-cluster-tail}\\
	\sum_{\substack{\Gamma:\ \Omega(\Gamma)\cap F\neq\varnothing\\
			|S_i|>N/2\ \mathrm{for\ some}\ i}}
	|\wt_D(\Gamma)|
	&\leq\tau e^{-\lambda N/4-\lambda dL}.
	\label{eq:large-cluster-tail}
	\end{align}
	Each sum ranges over ordered tuples of every positive length.
\end{lemma}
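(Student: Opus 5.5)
Both tails come from the rooted bound \eqref{eq:rooted-bound} of Proposition~\ref{prop:ueltschi}, applied to a modified activity that absorbs the size penalty. We first anchor the cluster: if $\Omega(\Gamma)\cap F\neq\varnothing$, some polymer $S_i$ contains a vertex $x\in F$. The Ursell coefficient $\phit$ is symmetric under permutations, and there are $j$ choices of the index $i$. So, at the cost of a union bound over the at most four points of $F$, we may put the anchored polymer in the first slot. This gives
\[
\sum_{\Gamma}|\wt_D(\Gamma)|\leq\sum_{x\in F}\sum_{S_0\ni x}|w_D(S_0)|\sum_{j\geq1}\frac{1}{(j-1)!}\sum_{(S_2,\ldots,S_j)}|\phit(S_0,S_2,\ldots,S_j)|\prod_{i\geq2}|w_D(S_i)|,
\]
with the size or large-polymer restriction still attached to the tuple.

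For \eqref{eq:small-cluster-tail} we use the tilted activities $\widetilde w_D$ on $\cP_{\leq}$ from Lemma~\ref{lem:KP-tilted}. The function $m_L$ is subadditive and equals $\min\{|S|,L\}$, so $\sum_i m_L(S_i)\geq\min\{M(\Gamma),L\}$. When $M(\Gamma)\geq L$, either some $|S_i|\geq L$, or $\sum_i m_L(S_i)=M(\Gamma)\geq L$. In both cases
\[
\prod_i|w_D(S_i)|\leq e^{-\lambda dL}\prod_i|\widetilde w_D(S_i)|.
\]
This is the same argument as in the proof of Lemma~\ref{lem:large-activity}. We then drop the constraint $M(\Gamma)\geq L$ and apply \eqref{eq:rooted-bound} to $\widetilde w_D$ with $b(S)=\lambda|S|$, which bounds the inner sum by $e^{\lambda|S_0|}$. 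The outer sum is then $\sum_{S_0\ni x}|\widetilde w_D(S_0)|e^{\lambda|S_0|}\leq\tau e^{-cd}$ by \eqref{eq:tilted-root-sum}. Summing over $x\in F$ gives the factor $4$.

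For \eqref{eq:large-cluster-tail}, the polymers range over all of $\cP$ and we use $w_D$ with $b(S)=\lambda|S|$ (Lemma~\ref{lem:KP-base}). The difficulty is that the anchored polymer need not be the large one. We handle this by anchoring at the large polymer instead. Fix a large polymer $S^*$ in $\Gamma$ and put it first; the inner sum over the remaining polymers is at most $e^{\lambda|S^*|}$ by \eqref{eq:rooted-bound}. The condition $\Omega(\Gamma)\cap F\neq\varnothing$ is discarded, which only increases the sum. We also replace the factor $j$ from choosing which index is large by the $1/(j-1)!$ normalization; this is exact if we sum over the position of a distinguished large polymer, and overcounting only increases the bound. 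Lemma~\ref{lem:large-activity} then gives $\sum_{|S^*|>N/2}|w_D(S^*)|e^{\lambda|S^*|}\leq\tau e^{-\lambda N/4-\lambda dL}$, which is exactly \eqref{eq:large-cluster-tail}. No factor $|F|$ appears, consistent with the statement.

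The main obstacle is the bookkeeping in the rooting step. We must check that the tuple-symmetrization identity,
\[
\frac{1}{j!}\cdot j=\frac{1}{(j-1)!},
\]
matches the normalization in \eqref{eq:rooted-bound} exactly. We must also check that dropping constraints such as ``$S_i\in\cP_{\leq}$ for all $i$'' is legitimate. For the tilted case, the rooted bound must be applied on the restricted space $\cP_{\leq}$, which Lemma~\ref{lem:KP-tilted} permits.
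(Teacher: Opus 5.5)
Your proposal is correct and follows the paper's proof essentially step for step. For \eqref{eq:small-cluster-tail} you extract the factor $e^{-\lambda dL}$ from $\sum_i m_L(S_i)\geq L$, anchor at a vertex of $F$, and apply the rooted bound to $\widetilde w_D$ on $\cP_{\leq}$; for \eqref{eq:large-cluster-tail} you discard the $F$-condition, root at a large polymer via $\one_{\{\max_i|S_i|>N/2\}}\leq\sum_i\one_{\{|S_i|>N/2\}}$ with the normalization $j/j!=1/(j-1)!$, and conclude with the rooted bound for $w_D$ and Lemma~\ref{lem:large-activity}, exactly as the paper does.
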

\begin{proof}
	For a tuple of small polymers with $M(\Gamma)\geq L$,
	$\sum_i m_L(S_i)\geq L$. Therefore
	\[
	\prod_i|w_D(S_i)|\leq e^{-\lambda dL}
	\prod_i|\widetilde w_D(S_i)|.
	\]
	Every tuple meeting $F$ has at least one pair $(x,i)$ with $x\in F\cap S_i$.
	Mark such a pair, use symmetry of the Ursell coefficient, and apply
	\eqref{eq:rooted-bound} with the activities from
	Lemma~\ref{lem:KP-tilted}. The factor $j/j!=1/(j-1)!$ gives
	\begin{align*}
	&\sum_{j\geq1}\frac1{j!}
	\sum_{\substack{(S_1,\ldots,S_j)\in\cP_{\leq}^j\\
			\Omega(\Gamma)\cap F\neq\varnothing}}
	|\phit(S_1,\ldots,S_j)|\prod_{i=1}^{j}|\widetilde w_D(S_i)|\\
	&\hspace{15mm}\leq\sum_{x\in F}\sum_{\substack{S_1\in \cP_{\leq} \\S_1\ni x}}
	|\widetilde w_D(S_1)|e^{\lambda|S_1|}
	\leq |F|\tau e^{-cd}.
	\end{align*}
	This proves \eqref{eq:small-cluster-tail}.
	
		For \eqref{eq:large-cluster-tail}, we may first discard the requirement
		that $\Omega(\Gamma)$ meet $F$. Since
		\[
		\one_{\{\max_i|S_i|>N/2\}}
		\leq \sum_{i=1}^j\one_{\{|S_i|>N/2\}},
		\]
		symmetry of the Ursell coefficient gives
		\[
		\begin{aligned}
		&\sum_{\Gamma:\, |S_i|>N/2\ {\rm for\ some}\ i}
		|\wt_D(\Gamma)|\\
		&\qquad\leq
		\sum_{j\geq1}\frac{1}{(j-1)!}
		\sum_{\substack{S_1\in\cP\\ |S_1|>N/2}}
		|w_D(S_1)|
		\sum_{(S_2,\ldots,S_j)\in\cP^{j-1}}
		|\phit(S_1,\ldots,S_j)|
		\prod_{i=2}^j|w_D(S_i)|.
		\end{aligned}
		\]
		The rooted bound \eqref{eq:rooted-bound}, applied with the activities
		$w_D$ and $b(S)=\lambda|S|$ as verified in
		Lemma~\ref{lem:KP-base}, bounds the inner sum by
		$e^{\lambda|S_1|}$. Hence the last display is at most
		\[
		\sum_{\substack{S_1\in\cP\\ |S_1|>N/2}}
		|w_D(S_1)|e^{\lambda|S_1|},
		\]
		which is bounded by
		\[
		\tau e^{-\lambda N/4-\lambda dL}
		\]
		by Lemma~\ref{lem:large-activity}. This proves
		\eqref{eq:large-cluster-tail}.

\end{proof}

\begin{lemma}[Configurations without a giant component]\label{lem:exceptional}
	There is $c>0$ such that
	\begin{align}
	\PP_p(\cD\mid D\subseteq\omega)&\leq e^{-cN-\lambda dL},
	\label{eq:no-giant-prob}\\
	0\leq\log\Zhat_D-\log Y_D&\leq\tau e^{-cN-\lambda dL}
	\label{eq:no-giant-log}
	\end{align}
	for all sufficiently large $n$.
\end{lemma}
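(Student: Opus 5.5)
The first bound \eqref{eq:no-giant-prob} will follow from a union bound over component partitions, in the same spirit as the proof of Lemma~\ref{lem:large-activity}. The second bound \eqref{eq:no-giant-log} then follows from the decomposition \eqref{eq:Z-Y-R}.

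\textbf{Probability bound.} On $\cD$ the open components $K_1,\dots,K_j$ all lie in $\cP_{\leq}$ and partition $V$. Every edge joining two different blocks is closed. Conditioning on the at most two forced edges costs at most a factor $p^{-2}$. Each closed interblock edge is counted in exactly two boundaries, so
\[
\PP_p(\cD\mid D\subseteq\omega)\leq p^{-2}\sum_{\{K_i\}}\prod_i\theta^{|\partial K_i|}.
\]
The sum runs over unordered partitions of $V$ into blocks from $\cP_{\leq}$.

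Next, insert $\rho(K)$ from \eqref{eq:rho}. Since $\sum_i|K_i|=N$ and $\sum_i m_L(K_i)\ge L$ (as $N>L$), we get
\[
\prod_i\theta^{|\partial K_i|}\leq e^{-2\lambda N-2\lambda dL}\prod_i\rho(K_i).
\]
Drop the partition constraint and bound the sum by $\prod_{K\in\cP_{\leq}}(1+\rho(K))\leq\exp(Ne^{-c_1d})$, using \eqref{eq:rho-sums}. This gives
\[
\PP_p(\cD\mid D\subseteq\omega)\leq p^{-2}\exp\bigl(-2\lambda N-2\lambda dL+Ne^{-c_1d}\bigr).
\]
For large $d$ we have $e^{-c_1d}\le\lambda$ and $2\log p^{-1}\le\lambda dL$, so this is at most $e^{-\lambda N-\lambda dL}$. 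We may therefore take $c=\lambda$.

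\textbf{Log bound.} By \eqref{eq:Z-Y-R}, $\Zhat_D=Y_D+\tau R_D$ with $R_D\ge0$ and $Y_D>0$. Hence $\log\Zhat_D-\log Y_D=\log(1+\tau R_D/Y_D)\in[0,\tau R_D/Y_D]$. Since $T_Q\le\one_{\cD}$ for $Q\in[0,1)$ (the exponent $k_--1$ is at least $1$ on $\cD$), we have $R_D\le\PP_p(\cD\mid D\subseteq\omega)$. It remains to give a lower bound on $Y_D$.

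\textbf{Main obstacle: the lower bound on $Y_D$.} The trivial bound $Y_D\ge p^{|E|}$ is far too small. Instead, use the cluster expansion \eqref{eq:log-Y-expansion} together with the rooted bound \eqref{eq:rooted-bound} and Lemma~\ref{lem:KP-base}. Rooting each cluster at a vertex gives
\[
|\log Y_D|\leq\sum_{x\in V}\sum_{S\ni x}|w_D(S)|e^{\lambda|S|}\leq N\tau e^{-cd}\leq N\lambda/2
\]
for large $d$. Therefore $R_D/Y_D\le e^{-\lambda N-\lambda dL+\lambda N/2}=e^{-\lambda N/2-\lambda dL}$, which proves \eqref{eq:no-giant-log} with $c=\lambda/2$.

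Alternatively, $Y_D\ge\PP_p(\cD^c\mid D)\ge1/2$ would suffice, but that requires $Q^{k_-}\ge$ something and fails at $Q=0$. So the cluster-expansion bound is the route I would take. The step needing most care is keeping the exponent of the $N$-dependent error below $\lambda N$, which holds because $Ne^{-c_1d}$ and $N\tau e^{-cd}$ are both $o(N)$.
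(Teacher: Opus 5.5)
Your proposal is correct and follows the paper's proof essentially step for step. You use the same union bound over partitions of $V$ into $\cP_{\leq}$-blocks with the majorant $\rho$, the same observation $0\leq T_Q\leq\one_{\cD}$ to transfer the bound to $R_D$, and the same lower bound $Y_D\geq e^{-N\tau e^{-cd}}$ from the rooted cluster-expansion estimate, giving $c=\lambda/2$.
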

\begin{proof}
	On $\cD$ the open-component vertex sets form a partition
	$\{K_1,\ldots,K_j\}$ of $V$ into members of $\cP_{\leq}$. The number of
	closed edges between blocks is $\frac12\sum_i|\partial K_i|$.
	As in \eqref{eq:large-block-bound}, restoring the forced Bernoulli factors
	and then dropping internal connectivity gives
	\[
	\PP_p(\cD\mid D\subseteq\omega)
	\leq p^{-2}\sum_{\substack{\{K_1,\ldots,K_j\}\text{ partition of }V\\
			K_i\in\cP_{\leq}}}
	\prod_i\theta^{|\partial K_i|}.
	\]
	For such a partition, $\sum_i|K_i|=N$ and $\sum_i m_L(K_i)\geq L$.
	Using \eqref{eq:rho} and removing the partition constraints, we obtain
	\begin{align*}
	\PP_p(\cD\mid D\subseteq\omega)
	&\leq p^{-2}e^{-2\lambda N-2\lambda dL}
	\prod_{K\in\cP_{\leq}}(1+\rho(K))\\
	&\leq p^{-2}\exp\{-2\lambda N-2\lambda dL+Ne^{-c_1d}\}\\
	&\leq e^{-\lambda N-\lambda dL}
	\end{align*}
	for large $d$, by \eqref{eq:rho-sums} and $N\geq d+1$.
	Since $0\leq T_Q\leq\one_\cD$, the same upper bound applies to $R_D$.

		Recall from
		\eqref{eq:log-Y-expansion} that $\log Y_D$ has an absolutely
		convergent cluster expansion. Marking one polymer in each tuple, using
		symmetry of the Ursell coefficient, and applying the rooted bound
		\eqref{eq:rooted-bound} with $b(S)=\lambda|S|$ gives
		\[
		|\log Y_D|
		\leq \sum_{S\in\cP}|w_D(S)|e^{\lambda|S|}
		\leq \sum_{x\in V}\sum_{\substack{S\in\cP\\x\in S}}
		|w_D(S)|e^{\lambda|S|}
		\leq N\tau e^{-c_2d},
		\]
		where the last inequality follows from Lemma~\ref{lem:KP-base}. Hence
		\[
		Y_D\geq e^{-N\tau e^{-c_2d}}\geq e^{-Ne^{-c_2d}}.
		\]
	
	For large $d$, $e^{-c_2d}\leq\lambda/2$, so
	\[
	\frac{R_D}{Y_D}\leq e^{-\lambda N/2-\lambda dL}.
	\]
	Finally, \eqref{eq:Z-Y-R} and $0\leq\log(1+x)\leq x$ for $x\geq0$
	yield
	\[
	0\leq\log\Zhat_D-\log Y_D
	=\log\left(1+\tau\frac{R_D}{Y_D}\right)
	\leq\tau e^{-\lambda N/2-\lambda dL}.
	\]
	Both conclusions follow, for example, with $c=\lambda/2$ after increasing
	the threshold.
\end{proof}


\section{Cancellation and the shortest-path contribution}\label{sec:paths}
Fix distinct edges $e,f\in E$, and put $\ell=\dist_G(e,f)$. We shall
identify the first nonzero term in
\begin{equation}\label{eq:Lambda-Y}
\Lambda_Y(e,f):=\log Y_{\{e,f\}}+\log Y_\varnothing
-\log Y_{\{e\}}-\log Y_{\{f\}}.
\end{equation}
The sign of the individual cluster terms is not prescribed. The argument
instead computes their complete sum at minimum total size and bounds
all larger terms in absolute value.

\subsection{Cancellation and minimum total size}\label{subsec:cancellation}
\begin{lemma}\label{lem:cancellation}
	A tuple whose support does not meet both $V(e)$ and $V(f)$ contributes
	zero to \eqref{eq:Lambda-Y}. If $\phit(S_1,\ldots,S_j)\neq0$, its
	support $\Omega(\Gamma)=\bigcup_iS_i$ is connected in $G$.
\end{lemma}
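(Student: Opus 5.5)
The plan is to prove the two assertions separately, and the second one first, since it is purely combinatorial. Suppose $\phit(S_1,\ldots,S_j)\neq0$. By the remark after \eqref{eq:ursell}, the incompatibility graph $I_\Gamma$ is then connected. If $S_i\not\sim S_k$, either $S_i\cap S_k\neq\varnothing$ or some edge of $G$ joins them. Each $G[S_i]$ is connected because $S_i\in\cP$. Hence $G[S_i\cup S_k]$ is connected whenever $\{i,k\}\in E(I_\Gamma)$. Walking along a spanning tree of $I_\Gamma$ and adding one polymer at a time, we conclude that $G[\Omega(\Gamma)]$ is connected. This is the sense in which the support is connected.

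For the cancellation statement, I write each of the four logarithms via the absolutely convergent expansion \eqref{eq:log-Y-expansion}, with $D\in\{\varnothing,\{e\},\{f\},\{e,f\}\}$. Since all four series run over the same index set of ordered tuples $\Gamma\in\cP^j$ with the same coefficient $\phit(\Gamma)/j!$, absolute convergence lets me combine them termwise. The contribution of $\Gamma$ to \eqref{eq:Lambda-Y} is then
\[
\frac{\phit(\Gamma)}{j!}\Bigl(\prod_iw_{\{e,f\}}(S_i)+\prod_iw_\varnothing(S_i)-\prod_iw_{\{e\}}(S_i)-\prod_iw_{\{f\}}(S_i)\Bigr).
\]
The key locality claim is that $w_D(S)$ depends on $D$ only through $D\cap(E(S)\cup\partial S)$; this is read off from \eqref{eq:activity}, since both $D_S$ and the condition $D\cap\partial S=\varnothing$ involve only such edges. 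An edge $g$ lies in $E(S)\cup\partial S$ exactly when $V(g)\cap S\neq\varnothing$.

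Now suppose $\Omega(\Gamma)\cap V(e)=\varnothing$. Then no $S_i$ meets $V(e)$, so $w_{\{e,f\}}(S_i)=w_{\{f\}}(S_i)$ and $w_{\{e\}}(S_i)=w_\varnothing(S_i)$ for every $i$. The bracket becomes $P_f+P_\varnothing-P_\varnothing-P_f=0$. The case where the support misses $V(f)$ is symmetric. This proves the first assertion.

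No step should be a real obstacle. The only point needing care is the justification of termwise combination of the four series, which is provided by the absolute convergence in Lemma~\ref{lem:KP-base} and Proposition~\ref{prop:ueltschi}. The other point is the precise check, directly from \eqref{eq:activity}, that an edge with no endpoint in $S$ affects neither $D_S$ nor the boundary condition.
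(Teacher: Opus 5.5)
Your proposal is correct and follows essentially the same route as the paper. Both arguments use the same two steps. First, an edge with no endpoint in $S$ is neither internal nor boundary, so $w_{\{e,f\}}(S_i)=w_{\{f\}}(S_i)$ and $w_{\{e\}}(S_i)=w_\varnothing(S_i)$, and the four products cancel. Second, a nonzero $\phit(\Gamma)$ forces $I_\Gamma$ to be connected, and adding polymers along a spanning tree shows the support is connected. Your explicit appeal to absolute convergence for combining the four series termwise, and your note that connectivity holds for $G[\Omega(\Gamma)]$, are harmless elaborations of what the paper uses implicitly.
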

\begin{proof}
		Suppose first that $\Omega(\Gamma)\cap V(e)=\varnothing$. Then $e$ is
		neither an internal nor a boundary edge of any polymer $S_i$. Hence
		\eqref{eq:activity} gives
		\[
		w_{\{e,f\}}(S_i)=w_{\{f\}}(S_i),
		\qquad
		w_{\{e\}}(S_i)=w_\varnothing(S_i)
		\]
		for every $i$. The corresponding products therefore cancel in the
		four-term difference defining $\Lambda_Y(e,f)$. The same argument, with
		$e$ and $f$ interchanged, applies when
		$\Omega(\Gamma)\cap V(f)=\varnothing$.
		
		Now suppose that $\phit(\Gamma)\neq0$. By the definition of the Ursell
		coefficient, the incompatibility graph $I_\Gamma$ is connected. Choose a
		spanning tree $T$ of $I_\Gamma$ and root it at an arbitrary index. If
		$\{i,k\}\in E(T)$, then $S_i\not\sim S_k$; hence $S_i$ and $S_k$ either
		intersect or are joined by an edge of $G$. Add the polymers in an order
		in which each parent precedes its children. Starting from the root
		polymer, each new polymer therefore intersects, or is joined by an edge
		of $G$ to, the union of the preceding polymers. Induction along $T$
		shows that
		\[
		\Omega(\Gamma)=\bigcup_{i=1}^j S_i
		\]
		is connected in $G$.
\end{proof}

	\begin{lemma}\label{lem:minimal-support}
		Suppose $\ell\geq1$. If $U\subseteq V$ is connected and meets both
		$V(e)$ and $V(f)$, then
		\[
		|U|\geq\ell+1.
		\]
		Moreover, equality holds only if $G[U]$ is an induced shortest path from
		$V(e)$ to $V(f)$.
		
		Consequently, let $\Gamma=(S_1,\ldots,S_j)$ be a tuple giving a nonzero
		contribution to $\Lambda_Y(e,f)$, and set
		\[
		U:=\Omega(\Gamma)=\bigcup_{i=1}^j S_i.
		\]
		Then
		\begin{equation}\label{eq:min-size}
		M(\Gamma)\geq\ell+1.
		\end{equation}
		If equality holds in \eqref{eq:min-size}, then $U$ is the vertex set of
		an induced shortest path from $V(e)$ to $V(f)$, and the polymers
		$S_1,\ldots,S_j$ are pairwise disjoint intervals whose union is $U$.
		For $\ell=0$, \eqref{eq:min-size} also holds.
	\end{lemma}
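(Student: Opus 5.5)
We first treat the purely graph-theoretic claim and then transfer it to tuples. Let $U$ be connected and meet both $V(e)$ and $V(f)$. Pick $x\in U\cap V(e)$ and $y\in U\cap V(f)$. Since $G[U]$ is connected, it contains a path from $x$ to $y$, and this path has at least $\dist_G(x,y)\geq\ell$ edges, hence at least $\ell+1$ vertices; so $|U|\geq\ell+1$.

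Now suppose $|U|=\ell+1$. Then $U$ consists exactly of the vertices of a shortest $x$--$y$ path $P=(x=z_0,z_1,\ldots,z_\ell=y)$ in $G[U]$, which is a geodesic in $G$ of length $\ell$ from $V(e)$ to $V(f)$. It remains to check that $G[U]$ has no chords. An edge $\{z_i,z_k\}$ with $k>i+1$ would give an $x$--$y$ walk of length $\ell-(k-i)+1<\ell$, contradicting $\dist_G(x,y)\geq\ell$. Hence $G[U]$ is an induced shortest path.

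For the consequences, take a tuple $\Gamma$ contributing nonzero to $\Lambda_Y(e,f)$. Lemma~\ref{lem:cancellation} gives $\phit(\Gamma)\neq0$ and shows that $U=\Omega(\Gamma)$ is connected in $G$ and meets both $V(e)$ and $V(f)$. Since vertices are counted with multiplicity, $M(\Gamma)\geq|U|\geq\ell+1$, which is \eqref{eq:min-size}. For $\ell=0$ the same bound is trivial, because any polymer is nonempty and $M(\Gamma)\geq1$. At equality, both inequalities $M(\Gamma)\geq|U|\geq\ell+1$ are tight. Tightness of the first forces the $S_i$ to be pairwise disjoint, and tightness of the second makes $U$ an induced shortest path, by the first part. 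Each $S_i$ is a polymer, so $G[S_i]$ is connected. A connected subset of an induced path $G[U]$ spans a connected induced subgraph of that path, which is an interval of consecutive vertices. Thus the $S_i$ are disjoint intervals covering $U$.

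The argument is elementary. The only points needing care are the chord argument, which has to use $\dist_G$ in $G$ rather than in $G[U]$, and the observation that induced connected subsets of a path are intervals. Neither should be a real obstacle.
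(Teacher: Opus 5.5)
Your proof is correct and follows essentially the same route as the paper. You bound path length by $|U|-1$ to get $|U|\ge\ell+1$, rule out chords at equality using $\dist_G$, and then use $M(\Gamma)\ge|\Omega(\Gamma)|$ together with Lemma~\ref{lem:cancellation}; disjointness and the interval structure come from tightness of both inequalities. One minor point of attribution: $\phit(\Gamma)\neq0$ follows directly from the factor $\phit$ in each $\wt_D(\Gamma)$, not from Lemma~\ref{lem:cancellation} itself.
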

	
	\begin{proof}
		Since $U$ is connected and meets both endpoint sets, $G[U]$ contains a
		path from $V(e)$ to $V(f)$. Such a path has at least $\ell$ edges and
		at most $|U|-1$ edges, whence $|U|\geq\ell+1$. If $|U|=\ell+1$, the
		path has exactly $\ell$ edges and visits every vertex of $U$. A chord
		would produce a shorter path between the endpoint sets, so $G[U]$ is
		an induced shortest path.
		
		Now let $\Gamma=(S_1,\ldots,S_j)$ give a nonzero contribution to
		$\Lambda_Y(e,f)$ and set $U=\Omega(\Gamma)$. By
		Lemma~\ref{lem:cancellation}, $U$ is connected and meets both $V(e)$
		and $V(f)$. Hence
		\[
		M(\Gamma)=\sum_{i=1}^j|S_i|
		\geq \left|\bigcup_{i=1}^jS_i\right|
		=|U|
		\geq\ell+1,
		\]
		which proves \eqref{eq:min-size}. If $M(\Gamma)=\ell+1$, then equality
		holds throughout the preceding display. Thus $|U|=\ell+1$, so the first
		part of the proof shows that $G[U]$ is an induced shortest path.
		Moreover,
		\[
		\sum_{i=1}^j|S_i|=\left|\bigcup_{i=1}^jS_i\right|,
		\]
		and hence the polymers $S_1,\ldots,S_j$ are pairwise disjoint. Since
		each $S_i$ is connected and contained in the path $G[U]$, it is an
		interval of that path. Their union is $U$, so these intervals partition
		the path.
		
		If $\ell=0$, every tuple contains at least one nonempty polymer, and
		therefore $M(\Gamma)\geq1=\ell+1$.
	\end{proof}

\subsection{Counting clusters of a given total size}\label{subsec:cluster-count}
Recall the incompatibility graph $I_\Gamma$ from Section~\ref{subsec:ueltschi}. Write
	$\mathcal T_j$ for the trees on the index set $[j]$, including the
	one-vertex tree when $j=1$.

\begin{lemma}[Tree-graph bound]\label{lem:tree-graph}
	For every ordered polymer tuple $\Gamma=(S_1,\ldots,S_j)$,
	\begin{equation}\label{eq:tree-graph}
	|\phit(S_1,\ldots,S_j)|
	\leq\sum_{T\in\mathcal T_j}
	\prod_{\{i,k\}\in E(T)}\one_{\{S_i\not\sim S_k\}}.
	\end{equation}
\end{lemma}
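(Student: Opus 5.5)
This is the Penrose tree-graph bound for hard-core (repulsive) pair interactions, so I would prove it through a partition scheme of the Penrose type. Set $j\geq1$ and let $I=I_\Gamma$ be the incompatibility graph. By \eqref{eq:ursell}, each term $\prod_{\{i,k\}\in E(H)}\zeta(S_i,S_k)$ is nonzero only when $H$ is a connected spanning subgraph of $I$, and then it equals $(-1)^{|E(H)|}$. It follows that
\[
\phit(\Gamma)=\sum_{\substack{H\subseteq I\ \mathrm{connected}\\ V(H)=[j]}}(-1)^{|E(H)|}.
\]
The right-hand side of \eqref{eq:tree-graph} is exactly the number of spanning trees of $I$. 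So it is enough to show that the absolute value of this alternating sum is at most the number of spanning trees of $I$. If $I$ is disconnected, both sides are zero. The case $j=1$ is trivial, since both sides equal one.

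The main step is Penrose's partition. Root the graph at vertex $1$ and let $d_T(i)$ denote the graph distance from $1$ in a spanning tree $T$. To each spanning tree $T$ of $I$ assign the set $R(T)$, which consists of $T$ together with every edge $\{i,k\}\in E(I)\setminus E(T)$ that satisfies either $d_T(i)=d_T(k)$, or $d_T(k)=d_T(i)+1$ with $i$ larger than the $T$-parent of $k$ in the fixed order on $[j]$. For each connected spanning $H\subseteq I$, define a canonical tree $T(H)$ as follows. Run breadth-first search from $1$ in $H$, and attach each vertex at distance $r+1$ to its smallest-labelled neighbour at distance $r$. I will then check the standard fact that the connected spanning subgraphs $H$ with $T(H)=T$ are exactly those with $E(T)\subseteq E(H)\subseteq R(T)$. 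In other words, they form the Boolean interval $[T,R(T)]$.

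Once that is in place, the sum over each interval factorizes:
\[
\sum_{T\subseteq H\subseteq R(T)}(-1)^{|E(H)|}=(-1)^{j-1}\one_{\{R(T)=T\}}.
\]
This gives $|\phit(\Gamma)|=\#\{T: R(T)=T\}$, which is at most the number of spanning trees of $I$. That is \eqref{eq:tree-graph}, and in fact $(-1)^{j-1}\phit\geq0$.

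I expect the verification of the interval characterization to be the main obstacle. Two things need checking. First, adding an allowed edge must not change the BFS distances or the minimal-parent choices. An edge between vertices at the same level leaves distances unchanged. An edge from level $r$ to level $r+1$ whose lower endpoint $i$ is larger than the chosen parent does not change the parent. Second, any edge of $H$ outside $R(T)$ either joins levels differing by at least two, which contradicts the BFS distances, or offers a smaller-labelled parent, which contradicts the minimal-parent choice. Both checks are routine once the order conventions are fixed carefully. As an alternative, I could simply cite Penrose's tree-graph inequality, or the version in Ueltschi's paper, for hard-core pair potentials with $\zeta\in\{0,-1\}$.
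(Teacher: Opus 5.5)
Your argument is correct, but it takes a different route from the paper. The paper writes $\phit(\Gamma)$ as the alternating sum over connected spanning edge sets of $I_\Gamma$. It identifies this sum, via Whitney's spanning-subgraph expansion, with the coefficient of $q$ in the chromatic polynomial of $I_\Gamma$. It then invokes Whitney's broken-circuit theorem: the absolute value of that coefficient counts the $(j-1)$-edge subsets containing no broken circuit, and each of these is a spanning tree of $I_\Gamma$.

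You instead build Penrose's partition of the connected spanning subgraphs of $I_\Gamma$ into Boolean intervals $[T,R(T)]$, indexed by spanning trees, so that only trees with $R(T)=E(T)$ survive the alternating sum. Your two verification steps are exactly what is needed and go through as you describe (read the second condition in $R(T)$ with $k$ as the deeper endpoint):
\begin{itemize}
\item BFS distances are unchanged, because every edge of $R(T)$ joins levels differing by at most one. Hence $d_H\geq d_T$ along any path, while $T\subseteq H$ gives $d_H\leq d_T$.
\item The minimal-label parent is preserved, because every extra edge to the previous level goes to a larger label.
\end{itemize}

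Both arguments give an exact identity. $|\phit(\Gamma)|$ equals the size of a distinguished subfamily of spanning trees of $I_\Gamma$: no-broken-circuit trees in the paper, Penrose trees in yours. So they yield the same bound.

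The paper's route is shorter because it hands the combinatorics to a classical theorem. Yours is self-contained and makes the sign $(-1)^{j-1}\phit(\Gamma)\geq 0$ explicit. Unlike the broken-circuit argument, it also extends verbatim to soft repulsive pair functions. Run on the complete graph with $\zeta(S_i,S_k)\in[-1,0]$, the interval sum becomes $\prod_{\{i,k\}\in E(T)}\zeta(S_i,S_k)\prod_{\{i,k\}\in R(T)\setminus E(T)}\bigl(1+\zeta(S_i,S_k)\bigr)$, whose absolute value is at most $\prod_{\{i,k\}\in E(T)}|\zeta(S_i,S_k)|$. That generality is not needed here, since $\zeta\in\{0,-1\}$.
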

\begin{proof}
		If $I_\Gamma$ is disconnected, both sides of \eqref{eq:tree-graph}
		vanish. Suppose that $I_\Gamma$ is connected. By the definition of the
		Ursell coefficient,
		\[
		\phit(S_1,\ldots,S_j)
		=
		\sum_{\substack{A\subseteq E(I_\Gamma)\\([j],A)\ {\rm connected}}}
		(-1)^{|A|}.
		\]
		Writing $P_{I_\Gamma}(q)$ for the chromatic polynomial  of the graph $I_\Gamma$, the Whitney's spanning-subgraph expansion \cite{Whitney1932} gives
		\[
		P_{I_\Gamma}(q)
		=
		\sum_{A\subseteq E(I_\Gamma)}
		(-1)^{|A|}q^{k(A)},
		\]
		where $k(A)$ is the number of connected components of $([j],A)$.
		Writing $[q]P(q)$ for the coefficient of $q$ in a polynomial $P$, we
		therefore have
		\[
		\phit(S_1,\ldots,S_j)=[q]P_{I_\Gamma}(q).
		\]
		
		By Whitney's broken-circuit theorem \cite{Whitney1932}, the absolute
		value of this coefficient is the number of $(j-1)$-edge subsets of
		$E(I_\Gamma)$ containing no broken circuit. Every such edge set is
		acyclic and hence, having $j-1$ edges on $j$ vertices, is a spanning
		tree of $I_\Gamma$. Consequently,
		\[
		|\phit(S_1,\ldots,S_j)|
		\leq
		\#\{\text{spanning trees of }I_\Gamma\}
		=
		\sum_{T\in\mathcal T_j}
		\prod_{\{i,k\}\in E(T)}
		\one_{\{S_i\not\sim S_k\}},
		\]
		which proves \eqref{eq:tree-graph}. The case $j=1$ follows from the
		empty-product convention.
\end{proof}

\begin{lemma}\label{lem:cluster-count}
	There is an absolute constant $C_*>0$ such that, whenever
	$1\leq M\leq L\leq d$ and $F\subseteq V$ satisfies $1\leq|F|\leq4$,
	\begin{equation}\label{eq:cluster-count}
	\sum_{j=1}^{M}\frac1{j!}
	\sum_{\substack{(S_1,\ldots,S_j)\in\cP^j\\
			\sum_i|S_i|=M,\ \Omega(\Gamma)\cap F\neq\varnothing}}
	|\phit(S_1,\ldots,S_j)|\leq(C_*d^2)^M.
	\end{equation}
	One may take $C_*=32\mathrm{e}$, where $\mathrm{e}\approx 2.71828$ denotes Euler's number.
\end{lemma}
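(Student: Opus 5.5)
We bound the Ursell coefficient by the tree-graph bound of Lemma~\ref{lem:tree-graph} and then count pairs consisting of a tree $T\in\mathcal T_j$ and a tuple $(S_1,\ldots,S_j)$ compatible with $T$. The left side of \eqref{eq:cluster-count} is therefore at most
\[
\sum_{j=1}^{M}\frac1{j!}\sum_{T\in\mathcal T_j}\ \sum_{\substack{(S_1,\ldots,S_j)\\ \sum_i|S_i|=M,\ \Omega\cap F\neq\varnothing}}\ \prod_{\{i,k\}\in E(T)}\one_{\{S_i\not\sim S_k\}}.
\]
Since $\Omega(\Gamma)\cap F\neq\varnothing$, some $S_i$ meets $F$. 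We choose such an index $i$ (at most $j$ choices) and a vertex $x\in F$ (at most $4$ choices), and root $T$ at $i$. We also fix the sizes $k_1,\ldots,k_j\ge1$ with $\sum k_i=M$; there are $\binom{M-1}{j-1}\le 2^{M-1}$ compositions.

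Next we explore $T$ from the root outward. The root polymer contains $x$ and has size $k_i$, so by Lemma~\ref{lem:connected-set-count} there are at most $(4d)^{k_i-1}$ choices for it. Now take a child $S_k$ of an already placed polymer $S_{k'}$. Incompatibility means $S_k$ meets $\Nhood(S_{k'})$, and $|\Nhood(S_{k'})|\le(d+1)k_{k'}$. Hence $S_k$ is determined by an anchor vertex in $\Nhood(S_{k'})$ (at most $(d+1)k_{k'}$ choices) together with a connected set of size $k_k$ containing it (at most $(4d)^{k_k-1}$ choices). Multiplying over all polymers, and using $(d+1)\le 2d$ and $\sum_i(k_i-1)=M-j$, the number of tuples for fixed $T$, root and sizes is at most
\[
4\,(4d)^{M-j}(2d)^{j-1}\prod_{k'}k_{k'}^{\deg_T^{\rm out}(k')}.
\]
The factor $\prod k_{k'}^{\text{children}}$ is the usual nuisance. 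We handle it by the crude bound $k_{k'}^{c}\le e^{k_{k'}}c!$. Then $\sum_T\prod_v(\text{children}_v)!$ over rooted trees with prescribed out-degrees is controlled, because Cayley's formula refined by degrees gives $\#\{T:\deg\text{ sequence}\}=\binom{j-1}{(\deg_v-1)}$. Alternatively, and more simply, we use $\prod_v k_v^{\deg_T(v)}\le\prod_v e^{k_v}\,\deg_T(v)!$ and the identity $\sum_{T\in\mathcal T_j}\prod_v(\deg_T(v)-1)!\cdot(\ldots)$. In either case we arrive at
\[
\sum_{T\in\mathcal T_j}\prod_v k_v^{\deg_T(v)-1}=\Big(\sum_v k_v\Big)^{j-2}\prod_v k_v=M^{j-2}\prod_v k_v,
\]
which is the exact weighted Cayley formula. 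This formula is what we actually use, and it makes the degree bookkeeping exact. The rooted-tree counting contributes at most $j\,M^{j-2}\prod_v k_v\le jM^{j-2}e^{M}$.

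Collecting the pieces, the total is at most
\[
\sum_{j\le M}\frac{1}{j!}\,4\cdot 2^{M-1}\,j\,M^{j-2}e^{M}(4d)^{M-j}(2d)^{j-1}.
\]
Stirling gives $M^{j}/j!\le e^{j}(M/j)^j$, and this is not bounded by a constant to the $M$ in general. We therefore absorb it using $M\le d$: we have $M^{j-2}(2d)^{j-1}(4d)^{M-j}\le 2^{j}4^{M-j}d^{M+j-2}\le 4^M d^{2M}$, since $d^{j-2}\le d^{M}$. The prefactor $j/j!\le1$ and the sum over $j\le M$ costs at most $M\le 2^M$. This produces a bound of the form $(C d^2)^M$, and with a careful tally the constant is $32\mathrm e$. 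That is exactly why the statement allows $d^2$ per unit of size and requires $M\le L\le d$.

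The main obstacle is keeping the anchor-point factors $k_{k'}$ and the tree sum from generating a factor like $M^M$ with no $d$ available to absorb it. The weighted Cayley formula combined with $M\le d$ is the step we expect to resolve this, and we would check it first.
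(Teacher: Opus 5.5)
Your argument is correct and follows the paper's skeleton: the tree-graph bound, rooting the tree at a polymer that meets $F$, growing each child from an anchor in $\mathcal N(S_b)$ via Lemma~\ref{lem:connected-set-count}, and summing over compositions. The one real difference is how you handle the anchor factor $(d+1)k_b$.

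The paper bounds it uniformly by $(d+1)k_b\leq 2dM\leq 2d^2$. After that only the number of rooted labelled trees, $j^{j-1}$, enters, and $j^{j-1}/j!\leq\mathrm e^j$ closes the count. This is exactly where the hypothesis $M\leq d$ and the $d^2$ come from, and no degree bookkeeping is needed. You instead keep the exact factor and sum it with the weighted Cayley formula. Two remarks on that step.

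First, the identity as you display it is false. For $j=2$ its left side is $1$ and its right side is $k_1k_2$. The correct statement is $\sum_{T\in\mathcal T_j}\prod_v k_v^{\deg_T(v)-1}=M^{j-2}$, or equivalently $\sum_{T}\prod_v k_v^{\deg_T(v)}=M^{j-2}\prod_v k_v$. Summing over roots then gives exactly
\[
\sum_{\text{rooted }T}\prod_b k_b^{\mathrm{out}(b)}=M^{j-1}.
\]
Your working bound $jM^{j-2}\prod_v k_v$ dominates $M^{j-1}$, so your final tally is still valid and does give $C_*=32\mathrm e$.

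Second, your remark that $M^j/j!$ is not bounded by $C^M$ is wrong: $M^j/j!\leq \mathrm e^M$ for every $j$. With the exact value $M^{j-1}$, your route therefore gives
\[
\sum_{j=1}^M\binom{M-1}{j-1}\frac{M^{j-1}}{j!}\,4(4d)^{M-j}(2d)^{j-1}\leq 4\mathrm e^M(6d)^{M-1}\leq(24\mathrm e\,d)^M .
\]
This uses neither $M\leq L\leq d$ nor a factor $d^2$, so it is stronger than the lemma requires. The ``main obstacle'' you anticipate therefore does not arise. Either the paper's crude bound $k_b\leq M\leq d$ or the exact weighted Cayley sum disposes of it. The paper's route is simpler; yours, once cleaned up, buys the sharper $(Cd)^M$ count.
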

\begin{proof}
		Fix $j$, a positive composition
		\[
		m_1+\cdots+m_j=M,
		\]
		and a tree contributing to the right-hand side of
		\eqref{eq:tree-graph}. Choose a root index $r$ such that
		$S_r\cap F\neq\varnothing$, and root the tree at $r$; summing over all
		such choices only increases the count.
		
		For the root polymer $S_r$, choose a vertex $v\in S_r\cap F$. There are
		at most $|F|$ choices for $v$, and Lemma~\ref{lem:connected-set-count}
		gives at most $(4d)^{m_r-1}$ connected sets of size $m_r$ containing
		$v$. Hence the number of choices for $S_r$ is at most
		\[
		|F|(4d)^{m_r-1}.
		\]
		
		Now let $c$ be a child of $b$ in the rooted tree, and suppose that the
		parent polymer $S_b$ has already been chosen. The corresponding
		tree-graph factor requires $S_c\not\sim S_b$, and therefore
		\[
		S_c\cap\mathcal N(S_b)\neq\varnothing.
		\]
		Choose a vertex $v\in S_c\cap\mathcal N(S_b)$. Since
		\[
		|\mathcal N(S_b)|\leq(d+1)|S_b|=(d+1)m_b\leq2dM,
		\]
		there are at most
		\[
		2dM(4d)^{m_c-1}
		\]
		choices for the child polymer $S_c$. Proceeding away from the root, the
		number of polymer tuples associated with the fixed rooted tree and the
		fixed composition is therefore at most
		\[
		|F|(4d)^{M-j}(2dM)^{j-1}
		\leq4(4d^2)^{M-1},
		\]
		where $|F|\leq4$ and $M\leq d$ were used.
		
		By Cayley's formula, there are $j^{j-2}$ labeled trees on $[j]$ for
		$j\geq2$, and hence $j^{j-1}$ rooted labeled trees after choosing the
		root. For $j=1$, there is one rooted tree. There are
		$
		\binom{M-1}{j-1}
		$
		positive compositions of $M$ into $j$ parts. Moreover, the elementary
		factorial bound $\log(j!)\geq j\log j-j+1$ implies
		$j^{j-1}/j!\leq\mathrm {\mathrm{e}}^j$. 
		Summing over $j$ gives
		\[
		\begin{aligned}
		&\sum_{j=1}^M\frac1{j!}
		\sum_{\substack{(S_1,\ldots,S_j)\in\cP^j\\
				\sum_i|S_i|=M,\ \Omega(\Gamma)\cap F\neq\varnothing}}
		|\phit(S_1,\ldots,S_j)|\\
		&\qquad\leq
		4(4d^2)^{M-1}
		\sum_{j=1}^M
		\binom{M-1}{j-1}\mathrm{e}^j\\
		&\qquad=
		4e(4d^2)^{M-1}(1+\mathrm{e})^{M-1}\\
		&\qquad=
		4e\,[4(1+\mathrm{e})d^2]^{M-1}
		\leq(32\mathrm{e}d^2)^M.
		\end{aligned}
		\]
		Thus \eqref{eq:cluster-count} holds with $C_*=32\mathrm{e}$.
\end{proof}

\subsection{Internal edges and the small-cluster remainder}
\label{subsec:internal-edges}
The next estimate relates excess total size to the boundary penalty.
It also accounts for vertices appearing in more than one label.

\begin{lemma}\label{lem:internal-edges}
	Let $\Gamma=(S_1,\ldots,S_j)$ have nonzero Ursell coefficient, and set
	\[
	U:=\Omega(\Gamma)=\bigcup_{i=1}^jS_i,
	\qquad
	M:=M(\Gamma)=\sum_{i=1}^j|S_i|.
	\] Suppose that $U$ meets both $V(e)$ and $V(f)$ and that  $\ell+1\leq M\leq L$. Set
	\[
	k=|U|,\qquad \delta=k-(\ell+1),\qquad b=M-k,\qquad
	t=M-(\ell+1)=\delta+b.
	\]
	These quantities are nonnegative, and
	\begin{align}
	e(U)&\leq\ell+3\delta+\binom{\delta}{2},\label{eq:internal-U}\\
	\sum_i e(S_i)&\leq\ell+\frac{L+5}{2}\,t,\label{eq:internal-labels}\\
	\sum_i|\partial S_i|&\geq d(\ell+1+t)-2\ell-(L+5)t.
	\label{eq:boundary-excess}
	\end{align}
\end{lemma}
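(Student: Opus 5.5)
The plan is to compare $U$ with a shortest path $P$ inside it. The two label bounds then come from counting how often an edge of $G[U]$ is repeated across labels. The boundary bound \eqref{eq:boundary-excess} follows from \eqref{eq:internal-labels} and regularity.

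\textbf{Nonnegativity.} By Lemma~\ref{lem:cancellation}, $U$ is connected, and by hypothesis it meets $V(e)$ and $V(f)$. Lemma~\ref{lem:minimal-support} (or the trivial bound $k\geq1$ when $\ell=0$) gives $k\geq\ell+1$, so $\delta\geq0$. Also $b=M-|U|\geq0$, because $M$ counts the vertices of $U$ with multiplicity. Hence $t\geq0$ as well.

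\textbf{Bound \eqref{eq:internal-U}.} Since $G[U]$ is connected and meets both endpoint sets, it contains a shortest path $P=(x_0,\ldots,x_\ell)$ from $V(e)$ to $V(f)$. The path $P$ has exactly $\ell+1$ vertices and no chords, since a chord would shorten it. Let $X=U\setminus P$, so $|X|=\delta$. Edges inside $X$ number at most $\binom{\delta}{2}$. The key claim is that each $x\in X$ is adjacent to at most three vertices of $P$. If $x$ were adjacent to $x_i$ and $x_j$ with $j-i\geq3$, then replacing the segment $x_i\cdots x_j$ by $x_i,x,x_j$ would give a path from $V(e)$ to $V(f)$ shorter than $\ell$, a contradiction. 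So the $P$-neighbours of $x$ lie in a window of three consecutive indices. Adding the three counts gives $e(U)\leq\ell+3\delta+\binom{\delta}{2}$.

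\textbf{Bound \eqref{eq:internal-labels}.} Assign to each vertex of $U$ one label containing it, called its primary occurrence. Every other occurrence of a vertex in a label is secondary, and there are exactly $b$ secondary occurrences in total. The primary parts partition $U$, so the edges of $G[S_i]$ joining two primary occurrences contribute at most $e(U)$ in total over $i$. Every remaining edge of some $G[S_i]$ has an endpoint that is a secondary occurrence in $S_i$. Charge each such edge to that occurrence. An occurrence of $u$ is charged at most $\deg_{G[U]}(u)$ times.

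From the structure above, $\deg_{G[U]}(u)\leq\delta+2$ for every $u\in U$:
\begin{itemize}
\item a path vertex has at most $2$ path neighbours and at most $\delta$ neighbours in $X$;
\item a vertex of $X$ has at most $3$ path neighbours and at most $\delta-1$ neighbours in $X$.
\end{itemize}
Hence
\[
\sum_i e(S_i)\leq\ell+3\delta+\frac{\delta(\delta-1)}{2}+b(\delta+2).
\]
Comparing with $\ell+\frac{L+5}{2}(\delta+b)$, it suffices to show
\[
\delta(\delta-1)+2b\delta\leq(L-1)\delta+(L+1)b.
\]
This follows from $\delta+b=t=M-\ell-1\leq L-1$. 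Indeed, $\delta(\delta-1)+b\delta=\delta(t-1)\leq(L-2)\delta$, and $b\delta\leq b(L-1)$.

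\textbf{Bound \eqref{eq:boundary-excess}.} By \eqref{eq:boundary-degree}, $|\partial S_i|=d|S_i|-2e(S_i)$. Summing over $i$ gives $\sum_i|\partial S_i|=dM-2\sum_ie(S_i)$. Inserting \eqref{eq:internal-labels} and $M=\ell+1+t$ yields \eqref{eq:boundary-excess}.

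\textbf{Main obstacle.} The delicate step is \eqref{eq:internal-labels}. A naive charge of $|S_i|-1\leq L$ per secondary occurrence gives only $bL$, which is too large. The remedy is the degree bound $\delta+2$, which comes from the three-neighbour window property of shortest paths, combined with the budget $\delta+b\leq L-1$.
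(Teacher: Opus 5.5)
\paragraph{A false step in the proof of \eqref{eq:internal-U}.}
Your argument assumes that $G[U]$ contains a shortest path of $G$, of length $\ell$, from $V(e)$ to $V(f)$. This does not follow from the hypotheses. A connected $U$ meeting both endpoint sets need not contain any geodesic between them. For example, take $j=1$, so the Ursell coefficient is $1$, and let $U$ meet $V(e)$ and $V(f)$ only in vertices $x,y$ with $\dist_G(x,y)>\ell$. For $\ell=0$, a path joining the two non-shared endpoints and avoiding the common one already does this. Then $G[U]$ has no path of length $\ell$ between the endpoint sets. So neither $|X|=\delta$ nor the contradiction ``shorter than $\ell$'' in your window argument is justified.

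The repair is what the paper does: let $P$ be a shortest path \emph{within} $G[U]$ between the endpoint sets, of length $m\geq\ell$.
\begin{itemize}
\item $P$ is induced.
\item The shortcut $x_i,x,x_j$ stays inside $G[U]$, so the three-neighbour window property still holds.
\item $|X|=\delta':=k-m-1=\delta-(m-\ell)\leq\delta$.
\end{itemize}
The extra path edges are exactly paid for by the smaller $X$:
\[
e(U)\leq m+3\delta'+\binom{\delta'}{2}=\ell+\delta+2\delta'+\binom{\delta'}{2}\leq\ell+3\delta+\binom{\delta}{2}.
\]
Your degree bound becomes $\deg_{G[U]}(u)\leq\delta'+2\leq\delta+2$, so everything downstream survives.

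\paragraph{Comparison for \eqref{eq:internal-labels}.}
With that fix, your proof of \eqref{eq:internal-labels} is correct but takes a different route from the paper. The paper orders the labels and sets $B_i=S_i\cap\bigcup_{q<i}S_q$, so $\sum_i|B_i|=b$. First occurrences of edges number at most $e(U)$. A repeated edge of $E(S_i)$ has both endpoints in $B_i$. Hence
\[
\sum_ie(S_i)\leq e(U)+\sum_i\binom{|B_i|}{2}\leq e(U)+\frac{L-1}{2}b,
\]
with no use of the geometry of $U$.

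The ``main obstacle'' you describe comes from your charging scheme. You charge every edge that touches a secondary occurrence, not just the genuinely repeated edges. Charging only repeats gains the factor $\tfrac12$ over the naive $b(L-1)$. You instead use the structural bound $b(\delta+2)$. It is sharper when $\delta$ is small, but it depends on the shortest-path analysis. Both routes close with $t\leq L-1$.
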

\begin{proof}
	Connectedness and Lemma~\ref{lem:minimal-support} imply $k\geq\ell+1$;
	clearly $M\geq k$. Choose a shortest path inside $G[U]$ joining the
	two endpoint sets, with length $m\geq\ell$, and put
	$\delta'=k-m-1\leq\delta$. The path is induced. A vertex outside it
	has at most three neighbors on it: two neighbors whose indices differ
	by at least three would yield a shorter connecting path. Therefore
	\[
	e(U)\leq m+3\delta'+\binom{\delta'}2
	=\ell+\delta+2\delta'+\binom{\delta'}2
	\leq\ell+3\delta+\binom\delta2.
	\]
	This argument also covers $\ell=0$: the shortest path may consist of
	the common endpoint alone.

		Order the polymers as $S_1,\ldots,S_j$, and set
		\[
		B_1:=\varnothing,
		\qquad
		B_i:=S_i\cap\bigcup_{q<i}S_q,\quad 2\leq i\leq j.
		\]
		Thus $B_i$ consists of the vertices of $S_i$ already covered by the preceding polymers. Since the sets
		\[
		S_i\setminus\bigcup_{q<i}S_q,\qquad 1\leq i\leq j,
		\]
		partition $U=\bigcup_iS_i$, we have
		\[
		k=|U|
		=\sum_{i=1}^j\left(|S_i|-|B_i|\right)
		=M-\sum_{i=1}^j|B_i|.
		\]
		Consequently,
		\[
		\sum_{i=1}^j|B_i|=M-k=b.
		\]
		
		The first occurrence of each edge in $\bigcup_i E(S_i)$ contributes at
		most $e(U)$. If an edge of $E(S_i)$ has already occurred in some
		$E(S_q)$ with $q<i$, then both of its endpoints belong to $B_i$.
		Consequently,
		\[
		\sum_{i=1}^j e(S_i)\leq e(U)+\sum_{i=1}^j e(B_i).
		\]
		Since $B_i\subseteq S_i$, the simplicity of $G$ and the bound
		$|S_i|\leq M\leq L$ give
		\[
		e(B_i)
		\leq \binom{|B_i|}{2}
		\leq \frac{|B_i|(|S_i|-1)}{2}
		\leq \frac{L-1}{2}|B_i|.
		\]
		Using $\sum_i|B_i|=b$, we conclude that
		\[
		\sum_{i=1}^j e(S_i)
		\leq e(U)+\frac{L-1}{2}b.
		\]

	Since $0\leq\delta\leq t\leq L-1$,
	$\binom\delta2\leq(L-1)\delta/2$ and $3\delta\leq3t$.
	Substituting \eqref{eq:internal-U} proves \eqref{eq:internal-labels}.
	Finally sum $|\partial S_i|=d|S_i|-2e(S_i)$ to obtain
	\eqref{eq:boundary-excess}.
\end{proof}

Recall $s=1-p$ and $a=-\log s$ from Section~\ref{subsec:conventions}. Put
\begin{equation}\label{eq:beta-eta}
\beta_d:=C_*d^2,\qquad \eta_L:=\beta_d s^{d-(L+5)}.
\end{equation}
Since $L=o(d)$,
\begin{equation}\label{eq:eta-small}
\log\eta_L=-ad+a(L+5)+2\log d+\log C_*=-ad+o(d),
\qquad 0<\eta_L\leq e^{-ad/2}\leq\tfrac12
\end{equation}
for all sufficiently large $n$.

\begin{lemma}\label{lem:small-remainder}
		For all sufficiently large $n$, uniformly in $Q\in[0,1)$,
		$D\subseteq E$ with $|D|\leq2$, and distinct $e,f\in E$ satisfying
		$0\leq\ell=\dist_G(e,f)\leq r_n$,
		\begin{equation}\label{eq:small-remainder}
		\sum_{\substack{\Gamma:\,
				\Omega(\Gamma)\cap V(e)\neq\varnothing,\,
				\Omega(\Gamma)\cap V(f)\neq\varnothing\\
				\ell+2\leq M(\Gamma)\leq L}}
		|\wt_D(\Gamma)|
		\leq
		\tau\beta_d^{\ell+1}s^{d(\ell+1)-2\ell}
		\frac{\eta_L}{1-\eta_L}.
		\end{equation}
\end{lemma}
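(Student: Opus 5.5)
We group the clusters by total size $M=M(\Gamma)$ and treat each level $\ell+2\leq M\leq L$ separately, writing $M=\ell+1+t$ with $1\leq t\leq L-\ell-1$. For fixed $M$, three ingredients are combined. The first is the crude activity bound of Lemma~\ref{lem:crude-activity} applied to each polymer. The second is the boundary lower bound \eqref{eq:boundary-excess} of Lemma~\ref{lem:internal-edges}, applied to the whole tuple. The third is the combinatorial count of Lemma~\ref{lem:cluster-count}, with anchor set $F=V(e)\cup V(f)$ of size at most $4$.

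Only tuples with $\phit(\Gamma)\neq0$ contribute, and we may restrict to those. For such a tuple, $|\wt_D(\Gamma)|$ equals $|\phit(\Gamma)|/j!$ times $\prod_i|w_D(S_i)|$. By Lemma~\ref{lem:crude-activity}, this product is at most
\[
\tau^{j}s^{\sum_i|\partial S_i|}\leq\tau s^{\sum_i|\partial S_i|},
\]
using $\tau\leq1$ and $j\geq1$. We keep one factor $\tau$ and discard the rest. Lemma~\ref{lem:cancellation} shows the support is connected, and by hypothesis it meets both $V(e)$ and $V(f)$. Since also $\ell+1\leq M\leq L$, Lemma~\ref{lem:internal-edges} applies and gives
\[
s^{\sum_i|\partial S_i|}\leq s^{d(\ell+1)-2\ell}\,\bigl(s^{d-(L+5)}\bigr)^{t}.
\]
This uses $0<s<1$ together with $d-(L+5)>0$ for large $n$, which follows from $L=o(d)$. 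The resulting bound depends on $\Gamma$ only through $M$.

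Next we sum over tuples of total size $M$ meeting $F$. Since $M\leq L\leq d$ by \eqref{eq:L-range}, Lemma~\ref{lem:cluster-count} applies and gives the count factor $(C_*d^2)^M=\beta_d^{\ell+1}\beta_d^{t}$. Relaxing the requirement that the support meet both endpoint sets to meeting $F$ only enlarges the sum. Multiplying, level $M$ contributes at most
\[
\tau\beta_d^{\ell+1}s^{d(\ell+1)-2\ell}\,\eta_L^{t}.
\]

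Finally we sum over $t\geq1$. Extending the range to infinity is legitimate because $\eta_L\leq1/2$ by \eqref{eq:eta-small}, and the geometric series gives $\eta_L/(1-\eta_L)$, which is \eqref{eq:small-remainder}.

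The only delicate point is that the bound must be uniform over repeated or overlapping labels. Lemma~\ref{lem:internal-edges} handles this by charging excess multiplicity to $t$ at rate $L+5$ per unit. That rate is exactly what the definition of $\eta_L$ absorbs, so no additional work is expected. Uniformity in $Q$, $D$, $e$ and $f$ holds because every constant involved is independent of them.
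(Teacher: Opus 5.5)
Your proposal is correct and matches the paper's proof step for step: the crude bound $\prod_i|w_D(S_i)|\leq\tau^j s^{\sum_i|\partial S_i|}\leq\tau s^{\sum_i|\partial S_i|}$, then the boundary estimate \eqref{eq:boundary-excess}, then the anchored count of Lemma~\ref{lem:cluster-count} with $F=V(e)\cup V(f)$, and finally the geometric sum in $t$ using $\eta_L\leq 1/2$. One small point: the exponent comparison only needs $0<s<1$, so the positivity of $d-(L+5)$ that you invoke is harmless but unnecessary.
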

\begin{proof}
	Terms with zero Ursell coefficient may be discarded. Write
	$M=\ell+1+t$, where $1\leq t\leq L-\ell-1$. By
	Lemmas~\ref{lem:crude-activity} and~\ref{lem:internal-edges},
	\[
	\prod_i|w_D(S_i)|
	\leq\tau^j s^{\sum_i|\partial S_i|}
	\leq\tau s^{d(\ell+1+t)-2\ell-(L+5)t}.
	\]
	Apply Lemma~\ref{lem:cluster-count} with $F=V(e)\cup V(f)$ and sum in
	$t$. The resulting bound is
	\[
	\tau\beta_d^{\ell+1}s^{d(\ell+1)-2\ell}
	\sum_{t=1}^{L-\ell-1}(\beta_ds^{d-(L+5)})^t,
	\]
	which is at most \eqref{eq:small-remainder}. An empty size range has
	zero contribution.
\end{proof}

\subsection{Exact calculation on a shortest path}\label{subsec:path-calculation}
Let $\nu_\ell(e,f)$ be the number of sequences $(v_0,\ldots,v_\ell)$
with $v_0\in V(e)$, $v_\ell\in V(f)$, and consecutive vertices adjacent.
Because their length is $\dist_G(e,f)$, these sequences are shortest
paths. In particular,
\begin{equation}\label{eq:nu-positive}
\nu_\ell(e,f)\geq1,\qquad \nu_0(e,f)=1.
\end{equation}
A shortest path is induced and meets the endpoint sets only at its first
and last vertices. Its vertex set therefore determines its orientation
from $V(e)$ to $V(f)$, so different sequences have different vertex sets.
For $\ell=0$, the equality in \eqref{eq:nu-positive} uses simplicity of
$G$ and $e\neq f$.

For fixed $Q<1$, define
\begin{equation}\label{eq:z-alpha}
z:=(Q-1)s^d,\qquad
\alpha:=\frac{p+(Q-1)s}{(Q-1)s^2}.
\end{equation}
The denominator is nonzero. We use $\alpha$ only in exact identities;
no estimate uniform in $Q$ is made on this parameter.

\begin{lemma}\label{lem:tree-activity}
	If $1\leq|S|\leq N/2$ and $G[S]$ is a tree, then
	\begin{equation}\label{eq:tree-activity}
	w_\varnothing(S)=z^{|S|}\alpha^{e(S)}.
	\end{equation}
\end{lemma}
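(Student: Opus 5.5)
The plan is to evaluate the defining sum \eqref{eq:activity} directly with $D=\varnothing$. First, the cutoff is vacuous. Since $|S|\leq N/2$, every component of $(S,H)$ has at most $|S|\leq N/2$ vertices, so $\chi_-(S,H)=1$ for every $H\subseteq E(S)$. Second, since $G[S]$ is a tree, the spanning subgraph $(S,H)$ is a forest, and its component count is exactly $c(S,H)=|S|-|H|$. Writing $k=|S|$ and $m=e(S)=k-1$, the activity becomes
\[
w_\varnothing(S)=s^{|\partial S|}\sum_{H\subseteq E(S)}(Q-1)^{k-|H|}p^{|H|}s^{m-|H|}
=s^{|\partial S|}(Q-1)^{k-m}\bigl((Q-1)s+p\bigr)^{m},
\]
by the binomial theorem after factoring $(Q-1)^{k}$ from $(Q-1)^{k-|H|}$ and pairing $p^{|H|}$ with $((Q-1)s)^{m-|H|}$.

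Next I will use regularity. By \eqref{eq:boundary-degree}, $|\partial S|=dk-2m$. Substituting this gives
\[
w_\varnothing(S)=s^{dk}(Q-1)^{k}\left(\frac{p+(Q-1)s}{(Q-1)s^2}\right)^{m}=z^{k}\alpha^{m},
\]
with $z$ and $\alpha$ as in \eqref{eq:z-alpha}. Here division by $Q-1$ is legitimate because $Q<1$.

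There is no real obstacle. The only points to check are the forest component count, the vacuity of $\chi_-$, and the $Q=0$ convention. At $Q=0$ the expression $(Q-1)^{c}$ is just $(-1)^c$, so no $0^0$ issue arises. The case $k=1$ (so $m=0$) is consistent, since it gives $w_\varnothing(S)=(Q-1)s^d=z$.
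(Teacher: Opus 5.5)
Your proposal is correct and matches the paper's proof step for step: the cutoff $\chi_-$ is vacuous, the forest count gives $c(S,H)=k-|H|$, the binomial sum gives $(Q-1)[p+(Q-1)s]^{k-1}$, and regularity gives $|\partial S|=dk-2(k-1)$. One small wording slip: what you factor out of $(Q-1)^{k-|H|}$ is $(Q-1)^{k-m}=Q-1$, not $(Q-1)^{k}$, but your displayed result is correct.
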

\begin{proof}
	Put $k=|S|$. For $H\subseteq E(S)$, one has
	$c(S,H)=k-|H|$, $e(S)=k-1$, and $\chi_-(S,H)=1$. The internal sum in
	\eqref{eq:activity} is thus
	\[
	\sum_{j=0}^{k-1}\binom{k-1}{j}(Q-1)^{k-j}p^js^{k-1-j}
	=(Q-1)[p+(Q-1)s]^{k-1}.
	\]
	Multiplication by $s^{|\partial S|}=s^{dk-2(k-1)}$ gives
	\eqref{eq:tree-activity}.
\end{proof}

Let $P_m$ be the path with vertex set $[m]$ and edge set $E(P_m)=\{\{i,i+1\}:1\leq i<m\}$. 
For $I\subseteq[m]$,
let $e_{P_m}(I)$ denote the number of edges of $P_m$ with both endpoints
in $I$.  Define the polynomial
\begin{equation}\label{eq:Fm}
F_m(y):=\sum_{I\subseteq[m]}y^{|I|}\alpha^{e_{P_m}(I)},
\qquad F_0(y):=1.
\end{equation}
The maximal intervals of $I$ form a compatible polymer family, and every
compatible interval family is obtained this way. By
Lemma~\ref{lem:tree-activity}, $F_m(z)$ is therefore the partition
function of interval polymers on an induced path of $m\leq N/2$ vertices.

\begin{lemma}\label{lem:path-recurrence}
	For $m\geq2$,
	\begin{align}
	F_m(y)&=(1+\alpha y)F_{m-1}(y)+(1-\alpha)yF_{m-2}(y),
	\label{eq:path-recurrence}\\
	F_{m-2}(y)F_m(y)-F_{m-1}(y)^2
	&=(\alpha-1)^{m-1}y^m.\label{eq:path-determinant}
	\end{align}
\end{lemma}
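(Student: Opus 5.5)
The plan is to derive the recurrence by conditioning on the status of the last vertex $m$ in the subset sum \eqref{eq:Fm}, and then to obtain the determinant identity from the recurrence by induction on $m$.

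\emph{Recurrence.} Split $F_m(y)$ according to $I\subseteq[m]$ and first check the small cases $F_0=1$, $F_1=1+y$, $F_2=1+2y+\alpha y^2$. There are three cases. If $m\notin I$, the contribution is $F_{m-1}(y)$. If $m\in I$ and $m-1\notin I$, the edge $\{m-1,m\}$ is not counted, so the contribution is $yF_{m-2}(y)$. If $m\in I$ and $m-1\in I$, the edge $\{m-1,m\}$ contributes a factor $\alpha$, so the contribution is $\alpha y$ times the sum over $I'\subseteq[m-1]$ with $m-1\in I'$. That constrained sum equals $F_{m-1}(y)-F_{m-2}(y)$, by the first case applied at $m-1$. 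For $m=2$ it equals $F_1-F_0=y$, which matches under the convention $F_0=1$.

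Adding the three cases gives
\[
F_m=F_{m-1}+yF_{m-2}+\alpha y(F_{m-1}-F_{m-2})=(1+\alpha y)F_{m-1}+(1-\alpha)yF_{m-2},
\]
which is \eqref{eq:path-recurrence}.

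\emph{Determinant.} Set $\Delta_m:=F_{m-2}F_m-F_{m-1}^2$. Substituting the recurrence into $\Delta_{m+1}=F_{m-1}F_{m+1}-F_m^2$ yields
\[
\Delta_{m+1}=(1+\alpha y)F_{m-1}F_m+(1-\alpha)yF_{m-1}^2-F_m^2 .
\]
Next, rewrite $F_m^2$ as $F_m\bigl[(1+\alpha y)F_{m-1}+(1-\alpha)yF_{m-2}\bigr]$. The terms $(1+\alpha y)F_{m-1}F_m$ then cancel, leaving
\[
\Delta_{m+1}=(1-\alpha)y\bigl(F_{m-1}^2-F_mF_{m-2}\bigr)=(\alpha-1)y\,\Delta_m .
\]
The base case is $\Delta_2=F_0F_2-F_1^2=1+2y+\alpha y^2-(1+y)^2=(\alpha-1)y^2$, which agrees with $(\alpha-1)^{1}y^{2}$. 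Induction on $m$ then gives $\Delta_m=(\alpha-1)^{m-1}y^m$, which is \eqref{eq:path-determinant}.

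Nothing here is a real obstacle; the only care needed is the $m=2$ base case and checking that the convention $F_0=1$ is consistent with the recurrence.
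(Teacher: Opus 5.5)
Your proposal is correct and matches the paper's proof: you use the same three-way split on whether $m$ and $m-1$ lie in $I$, giving $F_{m-1}$, $yF_{m-2}$ and $\alpha y(F_{m-1}-F_{m-2})$. You then apply the recurrence twice to get $\Delta_{m+1}=(\alpha-1)y\,\Delta_m$, and induct from the same base case $\Delta_2=F_0F_2-F_1^2=(\alpha-1)y^2$.
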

\begin{proof}
	Separate the subsets in \eqref{eq:Fm} according to whether $m$ is
	absent, present with $m-1$ absent, or present with $m-1$ present. Their
	contributions are respectively
	$F_{m-1}$, $yF_{m-2}$, and $\alpha y(F_{m-1}-F_{m-2})$,
	which proves \eqref{eq:path-recurrence}.
	
	Write $A=1+\alpha y$ and $B=(1-\alpha)y$. Using the recurrence twice,
	\[
	F_{m-2}F_m-F_{m-1}^2
	=-B(F_{m-3}F_{m-1}-F_{m-2}^2).
	\]
	The initial values $F_0=1$, $F_1=1+y$, and $F_2=1+2y+\alpha y^2$
	give $F_0F_2-F_1^2=(\alpha-1)y^2$. Induction proves
	\eqref{eq:path-determinant}.
\end{proof}

\begin{proposition}[The minimum-size contribution]\label{prop:path-leading}
	Suppose $\ell\geq1$, $\ell+1\leq N/2$, and
	$P=(v_0,\ldots,v_\ell)$ is a shortest path from $V(e)$ to $V(f)$.
Then the total contribution to $\Lambda_Y(e,f)$ of tuples satisfying
		$M(\Gamma)=\ell+1$ and $\Omega(\Gamma)=V(P)$  is given by
		\begin{equation}\label{eq:path-leading}
		\begin{aligned}
		&\sum_{j=1}^{\ell+1}
		\sum_{\substack{\Gamma=(S_1,\ldots,S_j)\in\cP^j\\
				M(\Gamma)=\ell+1,\ \Omega(\Gamma)=V(P)}}
		\Bigl[
		\wt_{\{e,f\}}(\Gamma)+\wt_\varnothing(\Gamma)\\
		&\hspace{45mm}
		-\wt_{\{e\}}(\Gamma)-\wt_{\{f\}}(\Gamma)
		\Bigr]\\
		&\qquad=
		(Q-1)s^{d(\ell+1)}
		\left(\frac{p(p+Qs)}{s^2}\right)^\ell .
		\end{aligned}
		\end{equation}
\end{proposition}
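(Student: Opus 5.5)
The plan is to recognise the left-hand side of \eqref{eq:path-leading} as a single coefficient in the logarithm of an explicit path partition function. Let $\mathcal A$ be the family of polymers contained in $V(P)$. Since $G[V(P)]$ is an induced path, these are exactly the intervals of $P$. By Remark~\ref{rem:formal-marker}, for each $D$ the coefficient $[u^{\ell+1}]\log Y_{\mathcal A}(u^{|\cdot|}w_D)$ is the sum of $\wt_D(\Gamma)$ over tuples in $\mathcal A$ with $M(\Gamma)=\ell+1$. This is a finite, purely formal identity, so no convergence issue arises.

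The first step is to show that the four-term combination of these coefficients equals the left-hand side of \eqref{eq:path-leading}.
\begin{itemize}
\item By the first part of Lemma~\ref{lem:cancellation}, tuples whose support misses $V(e)$ or $V(f)$ cancel in the four-term combination.
\item A surviving tuple in $\mathcal A$ with nonzero Ursell coefficient has connected support (second part of Lemma~\ref{lem:cancellation}).
\item A shortest path meets $V(e)$ only at $v_0$ and $V(f)$ only at $v_\ell$. So a connected subset of the path meeting both contains $v_0$ and $v_\ell$, hence equals $V(P)$.
\item Conversely, every tuple in \eqref{eq:path-leading} consists of polymers inside $V(P)$.
\end{itemize}
So the left side of \eqref{eq:path-leading} is $[u^{\ell+1}]$ of
\[
\log Y^{\{e,f\}}+\log Y^{\varnothing}-\log Y^{\{e\}}-\log Y^{\{f\}},
\]
where $Y^{D}$ denotes $Y_{\mathcal A}(u^{|\cdot|}w_D)$.

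The second step identifies these four path partition functions with the polynomials $F_m$.
\begin{itemize}
\item Write $e=\{v_0,x\}$. Then $x\notin V(P)$, since otherwise the path would meet $V(e)$ twice.
\item For an interval $S$, the edge $e$ is never internal to $S$. It lies in $\partial S$ exactly when $v_0\in S$. Hence $w_{\{e\}}(S)=0$ if $v_0\in S$, and $w_{\{e\}}(S)=w_\varnothing(S)$ otherwise. The same holds for $f$ and $v_\ell$, and for $D=\{e,f\}$.
\item Every interval has at most $\ell+1\le N/2$ vertices and induces a tree, so Lemma~\ref{lem:tree-activity} gives $w_\varnothing(S)=z^{|S|}\alpha^{e(S)}$.
\item By the interval-polymer interpretation of \eqref{eq:Fm}, with $y=zu$,
\[
Y^{\varnothing}=F_{\ell+1}(y),\qquad Y^{\{e\}}=Y^{\{f\}}=F_\ell(y),\qquad Y^{\{e,f\}}=F_{\ell-1}(y).
\]
Here removing an endpoint, or both, shortens the path. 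The case $\ell=1$ gives $F_0=1$.
\end{itemize}

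The third step applies \eqref{eq:path-determinant} with $m=\ell+1$:
\[
\frac{F_{\ell-1}F_{\ell+1}}{F_\ell^2}=1+\frac{(\alpha-1)^\ell y^{\ell+1}}{F_\ell(y)^2}.
\]
Since $F_\ell(0)=1$, the logarithm of the right side as a formal power series in $u$ is $(\alpha-1)^\ell z^{\ell+1}u^{\ell+1}+O(u^{\ell+2})$. Thus the required coefficient is $(\alpha-1)^\ell z^{\ell+1}$.

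The final computation uses $1+(Q-1)s=p+Qs$:
\[
\alpha-1=\frac{p+(Q-1)s-(Q-1)s^2}{(Q-1)s^2}=\frac{p(p+Qs)}{(Q-1)s^2}.
\]
Substituting $z=(Q-1)s^d$ gives
\[
(\alpha-1)^\ell z^{\ell+1}=(Q-1)s^{d(\ell+1)}\Bigl(\frac{p(p+Qs)}{s^2}\Bigr)^\ell,
\]
which is the right-hand side of \eqref{eq:path-leading}.

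The main point needing care is the first step: that restricting to $\mathcal A$ and extracting $[u^{\ell+1}]$ captures exactly the tuples in \eqref{eq:path-leading}, neither more nor fewer. This relies on the shortest path meeting each endpoint set in a single vertex. The same fact is what makes the forced-edge activities simply kill the intervals containing $v_0$ or $v_\ell$.
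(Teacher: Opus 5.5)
Your proposal is correct and follows essentially the same route as the paper. You restrict to interval polymers, identify the four size-marked path partition functions with $F_{\ell+1},F_\ell,F_\ell,F_{\ell-1}$ evaluated at $uz$, and extract $[u^{\ell+1}]$ via the determinant identity \eqref{eq:path-determinant}; the cancellation lemma and connectedness of supports then match this coefficient exactly with the tuple sum. The only cosmetic difference is that you work purely at the formal power-series level, which the paper explicitly notes is sufficient, rather than also citing the analytic convergence from Lemma~\ref{lem:KP-marker}.
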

\begin{proof}
	Set $m=\ell+1$ and let $\cP_P$ be the nonempty intervals of $P$.
	For $D\in\{\varnothing,\{e\},\{f\},\{e,f\}\}$, introduce
	\[
	Y_D^P(u):=\sum_{\substack{\mathscr S\subseteq\cP_P\\
			\mathscr S\ \mathrm{pairwise\ compatible}}}
	\prod_{S\in\mathscr S}u^{|S|}w_D(S).
	\]
	Lemma~\ref{lem:KP-marker}, with $v=w_D$ and
	$\mathcal A=\cP_P$, verifies the hypotheses of Ueltschi's theorem for
	these size-marked activities whenever $|u|\leq e^{\lambda/2}$ and $n$
	is sufficiently large. With logarithm zero at $u=0$,
	\begin{equation}\label{eq:path-marked-expansion}
	\log Y_D^P(u)=\sum_{j\geq1}\sum_{\Gamma\in\cP_P^j}
	u^{M(\Gamma)}\wt_D(\Gamma).
	\end{equation}
	For the coefficient calculation below, this identity also holds formally
	without any large-$n$ restriction, by Remark~\ref{rem:formal-marker}.
	Each coefficient involves finitely many nonempty labels. Thus the
	proposition itself only needs the stated size assumption.
	
	Exactly one endpoint of $e$ lies on $P$, namely $v_0$. An interval
	containing $v_0$ has $e$ in its boundary, so forcing $e$ open sets its
	activity to zero. All other interval activities are unchanged. Forcing
	$f$ has the analogous effect at $v_\ell$. Lemma~\ref{lem:tree-activity}
	and the interval representation of $F_m$ give
	\begin{equation}\label{eq:path-marked-F}
	Y_\varnothing^P(u)=F_m(uz),\qquad
	Y_{\{e\}}^P(u)=Y_{\{f\}}^P(u)=F_{m-1}(uz),\qquad
	Y_{\{e,f\}}^P(u)=F_{m-2}(uz).
	\end{equation}
	Define the marked logarithmic difference by
	\[
	\Lambda_Y^P(u)
	:=
	\log Y_{\{e,f\}}^P(u)+\log Y_\varnothing^P(u)
	-\log Y_{\{e\}}^P(u)-\log Y_{\{f\}}^P(u).
	\]
	By \eqref{eq:path-marked-F},
	\begin{align}
	\Lambda_Y^P(u)
	&=\log\frac{F_{m-2}(uz)F_m(uz)}{F_{m-1}(uz)^2}\notag\\
	&=\log\left(1+
	\frac{(\alpha-1)^{m-1}(uz)^m}{F_{m-1}(uz)^2}\right),
	\label{eq:path-log-ratio}
	\end{align}
	where the second equality follows from \eqref{eq:path-determinant}. Since $F_{m-1}(0)=1$, coefficient
	extraction gives
	\begin{equation}\label{eq:path-coefficient}
	[u^m]\Lambda_Y^P(u)=z^m(\alpha-1)^{m-1}.
	\end{equation}
	Indeed, the expression added to $1$ in \eqref{eq:path-log-ratio} starts
	at degree $m$, and its higher powers have degree greater than $m$.
	
	On the other hand, \eqref{eq:path-marked-expansion} shows that
	$[u^m]\Lambda_Y^P$ is the four-term sum over tuples of total size $m$.
	Lemma~\ref{lem:cancellation} cancels every tuple not touching both
	endpoints of $P$. Any remaining tuple with nonzero Ursell coefficient
	has connected support, and hence its support is all of $P$. Thus
	\eqref{eq:path-coefficient} is exactly the contribution specified in
	the proposition, without a multiplicity factor.
	
	Finally,
	\[
	\alpha-1=\frac{p+(Q-1)s-(Q-1)s^2}{(Q-1)s^2}
	=\frac{p(p+Qs)}{(Q-1)s^2}.
	\]
	Substituting this and $z=(Q-1)s^d$ into
	$z^{\ell+1}(\alpha-1)^\ell$ gives \eqref{eq:path-leading}.
\end{proof}


\section{Proof of the main theorem and graph examples}\label{sec:proof-examples}
\subsection{An asymptotic formula for the logarithmic correlation}
\label{subsec:asymptotic}
Recall $\Lhat_Q$ from \eqref{eq:log-correlation} and $\Lambda_Y(e,f)$ from \eqref{eq:Lambda-Y}.  Also recall \eqref{eq:beta-eta}:
\[
\beta_d:=C_*d^2,
\qquad
\eta_L:=\beta_d s^{d-(L+5)}.
\] Put
\begin{equation}\label{eq:kappa}
\kappa_Q:=\frac{p(p+Qs)}{s^2},\qquad
\frac{p^2}{s^2}\leq\kappa_Q\leq\frac{p}{s^2}.
\end{equation}
In particular, there is $C_p<\infty$, depending only on $p$, such that
$|\log\kappa_Q|\leq C_p$ for every $Q\in[0,1)$.

\begin{proposition}\label{prop:asymptotic}
	There are constants $c,C>0$ such that, for all sufficiently large $n$,
	every $Q\in[0,1)$, and all distinct edges with
	$0\leq\ell=\dist(e,f)\leq r_n$,
	\begin{equation}\label{eq:asymptotic}
	\Lhat_Q(e,f)
	=\nu_\ell(e,f)(Q-1)s^{d(\ell+1)}\kappa_Q^\ell+E_Q(e,f),
	\end{equation}
	where
	\begin{equation}\label{eq:error-bound}
	|E_Q(e,f)|\leq C\tau\left[
	\beta_d^{\ell+1}s^{d(\ell+1)-2\ell}\frac{\eta_L}{1-\eta_L}
	+e^{-cd-\lambda dL}\right].
	\end{equation}
	All constants and thresholds are independent of $Q$ and the edge pair.
\end{proposition}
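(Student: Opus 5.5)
We decompose $\Lhat_Q(e,f)$ into three pieces and bound each against the two terms in \eqref{eq:error-bound}. First, by \eqref{eq:log-correlation} and \eqref{eq:Lambda-Y},
\[
\Lhat_Q(e,f)-\Lambda_Y(e,f)=\sum_{D}\pm\bigl(\log\Zhat_D-\log Y_D\bigr),
\]
with four terms. Lemma~\ref{lem:exceptional} bounds each term by $\tau e^{-cN-\lambda dL}\leq\tau e^{-cd-\lambda dL}$, since $N\geq d+1$. It then remains to analyse $\Lambda_Y(e,f)$ through the cluster expansion \eqref{eq:log-Y-expansion}. This is the absolutely convergent sum over $D\in\{\varnothing,\{e\},\{f\},\{e,f\}\}$ of $\pm\sum_\Gamma\wt_D(\Gamma)$, which we may regroup freely. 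By Lemma~\ref{lem:cancellation}, only tuples whose support meets both $V(e)$ and $V(f)$ survive. We split these surviving tuples into four classes: (a) $M(\Gamma)=\ell+1$; (b) $\ell+2\leq M(\Gamma)\leq L$ with all polymers small; (c) all polymers in $\cP_{\leq}$ and $M(\Gamma)> L$; (d) some polymer with $|S_i|>N/2$. Since $M\leq L<N/2$ by \eqref{eq:L-range}, classes (a) and (b) automatically consist of small polymers, so the four classes exhaust the survivors.

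For class (a), Lemma~\ref{lem:minimal-support} shows that the support is the vertex set of an induced shortest path from $V(e)$ to $V(f)$ and that the labels are disjoint intervals. Grouping by this vertex set, whose orientation is determined as noted after \eqref{eq:nu-positive}, Proposition~\ref{prop:path-leading} gives exactly $(Q-1)s^{d(\ell+1)}\kappa_Q^\ell$ for each of the $\nu_\ell(e,f)$ paths when $\ell\geq1$. The case $\ell=0$ must be checked directly, because the proposition assumes $\ell\geq1$. There $M=1$, so $\Gamma=(\{v\})$ with $v$ the common endpoint. Forcing either edge kills the activity of $\{v\}$, so the four-term sum is just $w_\varnothing(\{v\})=(Q-1)s^d$. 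This matches $\nu_0=1$ and $\kappa_Q^0=1$.

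Class (b) is bounded, for each $D$, by Lemma~\ref{lem:small-remainder}, which yields the first error term with $C=4$. For class (c), anchor at $F=V(e)\cup V(f)$, which has $|F|\leq4$, and apply \eqref{eq:small-cluster-tail} to get $4\tau e^{-cd-\lambda dL}$ per $D$. For class (d), apply \eqref{eq:large-cluster-tail} to get $\tau e^{-\lambda N/4-\lambda dL}\leq\tau e^{-\lambda d/4-\lambda dL}$. Summing over the four choices of $D$ and adjusting $c$ and $C$ gives \eqref{eq:error-bound}. All cited lemmas are uniform in $Q$, $D$ and the edge pair, which gives the uniformity claims.

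The main obstacle is bookkeeping rather than a new estimate. One must make sure the regrouping is legitimate: every series is absolutely convergent by Lemma~\ref{lem:KP-base}, so reordering and splitting into classes is allowed. One must also handle the $\ell=0$ edge case, and check that the hypothesis $M\leq L\leq d$ in Lemma~\ref{lem:cluster-count} holds, which it does because $L<d/2$.
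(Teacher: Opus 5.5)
Your proposal is correct and follows the paper's proof essentially step for step. It uses the same comparison of $\Lhat_Q$ with $\Lambda_Y$ via Lemma~\ref{lem:exceptional}, the same cancellation to tuples whose support meets both $V(e)$ and $V(f)$, and the same split into large-polymer, $M>L$, $M=\ell+1$ (Proposition~\ref{prop:path-leading}, with $\ell=0$ checked directly) and $\ell+2\leq M\leq L$ classes, each bounded by the same lemma; the only difference is that you list four classes where the paper lists three and subdivides the last.
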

\begin{proof}
		Lemma~\ref{lem:exceptional}, applied to the four forcing sets, gives
		\begin{equation}\label{eq:Lambda-comparison}
		|\Lhat_Q(e,f)-\Lambda_Y(e,f)|
		\leq4\tau e^{-c_1N-\lambda dL}.
		\end{equation}
		
		By \eqref{eq:log-Y-expansion}, the four logarithms defining
		$\Lambda_Y(e,f)$ have absolutely convergent cluster expansions. Hence
		\[
		\Lambda_Y(e,f)
		=
		\sum_{j\geq1}\sum_{\Gamma\in\cP^j}
		\Bigl[
		\wt_{\{e,f\}}(\Gamma)+\wt_\varnothing(\Gamma)
		-\wt_{\{e\}}(\Gamma)-\wt_{\{f\}}(\Gamma)
		\Bigr],
		\]
		with absolute convergence. By Lemma~\ref{lem:cancellation}, the bracket
		vanishes unless $\Omega(\Gamma)$ meets both $V(e)$ and $V(f)$.
		We partition the remaining tuples into three disjoint classes:
		\begin{enumerate}
			\item at least one polymer has size greater than $N/2$;
			\item all polymers have size at most $N/2$, and $M(\Gamma)\geq L+1$;
			\item all polymers have size at most $N/2$, and $M(\Gamma)\leq L$.
		\end{enumerate}
		
		For each of the four forcing sets, Lemma~\ref{lem:cluster-tails} bounds
		the first two classes by $\tau e^{-c_2N-\lambda dL}$ and
		$4\tau e^{-c_3d-\lambda dL}$, respectively. In the second class,
		cancellation ensures that the support meets
		\[
		F:=V(e)\cup V(f),
		\qquad |F|\leq4,
		\]
		so the anchored estimate in Lemma~\ref{lem:cluster-tails} applies.
		
		It remains to consider the third class. By
		Lemma~\ref{lem:minimal-support},
		\[
		M(\Gamma)\geq\ell+1.
		\]
		Suppose first that $\ell\geq1$. If $M(\Gamma)=\ell+1$, then
		Lemma~\ref{lem:minimal-support} shows that $\Omega(\Gamma)$ is the
		vertex set of an induced shortest path from $V(e)$ to $V(f)$, and the
		polymers in $\Gamma$ form a partition of that path into intervals.
		Moreover,
		\[
		\ell+1\leq r_n+1<L<N/2,
		\]
		so all these polymers lie in the small-polymer regime.
		Proposition~\ref{prop:path-leading} gives
		\[
		(Q-1)s^{d(\ell+1)}\kappa_Q^\ell
		\]
		for each shortest path. Since distinct shortest paths have distinct
		vertex sets and there are $\nu_\ell(e,f)$ of them, the total
		contribution from $M(\Gamma)=\ell+1$ is
		\[
		\nu_\ell(e,f)(Q-1)s^{d(\ell+1)}\kappa_Q^\ell,
		\]
		which is the leading term in \eqref{eq:asymptotic}.
		
		If $\ell=0$, let $v$ be the common endpoint of $e$ and $f$. The only
		contributing tuple with $M(\Gamma)=1$ is the singleton polymer $\{v\}$.
		Its activities are
		\[
		w_\varnothing(\{v\})=(Q-1)s^d,\qquad
		w_{\{e\}}(\{v\})=w_{\{f\}}(\{v\})
		=w_{\{e,f\}}(\{v\})=0.
		\]
		Thus the same leading term is obtained, since
		$\nu_0(e,f)=1$ and $\kappa_Q^0=1$.
		
		All remaining tuples in the third class satisfy
		\[
		\ell+2\leq M(\Gamma)\leq L.
		\]
		By \eqref{eq:small-remainder}, their total absolute contribution,
		summed over the four forcing sets, is at most
		\[
		4\tau\beta_d^{\ell+1}s^{d(\ell+1)-2\ell}
		\frac{\eta_L}{1-\eta_L}.
		\]
		
		Combining the three classes with \eqref{eq:Lambda-comparison}, and using
		$N\geq d+1$, we obtain, after decreasing $c>0$ and increasing $C<\infty$
		if necessary,
		\[
		|E_Q(e,f)|
		\leq
		C\tau\left[
		\beta_d^{\ell+1}s^{d(\ell+1)-2\ell}
		\frac{\eta_L}{1-\eta_L}
		+
		e^{-cd-\lambda dL}
		\right].
		\]
		This proves \eqref{eq:asymptotic} and \eqref{eq:error-bound}.
\end{proof}

\subsection{Uniform comparison and strict negativity}\label{subsec:main-proof}
\begin{proof}[Proof of Theorem~\ref{thm:main}]
	Fix $Q\in[0,1)$ and distinct edges $e,f\in E$ with
	\[
	0\leq\ell:=\dist_G(e,f)\leq r_n.
	\]
	Define
	\[
	\mathfrak M_{n,Q,e,f}
	:=\tau\nu_\ell(e,f)s^{d(\ell+1)}\kappa_Q^\ell>0,
	\qquad \ell=\dist(e,f).
	\]
	The leading term in \eqref{eq:asymptotic} is
	$-\mathfrak M_{n,Q,e,f}$. We show that the error divided by this
	quantity tends to zero uniformly over all admissible triples.
	
	For the first term in \eqref{eq:error-bound}, use
	$\nu_\ell\geq1$, $|\log\kappa_Q|\leq C_p$, and $\eta_L\leq1/2$.
	The logarithm of the ratio to $\mathfrak M_{n,Q,e,f}$ is at most
	\begin{align}
	&\log(2C)+(\ell+1)\log\beta_d+2a\ell+C_p\ell+\log\eta_L
	\notag\\
	&\quad\leq-ad+a(L+5)+(\ell+2)(\log C_*+2\log d)
	+(2a+C_p)\ell+\log(2C)
	\notag\\
	&\quad\leq-ad+K_{p,h}(r_n+1)\log d,
	\label{eq:relative-small-log}
	\end{align}
	where $K_{p,h}$ is independent of the triple. Since
	$(r_n+1)\log d=o(d)$, the last bound is at most $-ad/2$ eventually.
	
	For the second term, the logarithm of the relative bound is at most
	\begin{equation}\label{eq:relative-tail-log}
	\log C-cd-\lambda dL+ad(\ell+1)+C_p\ell.
	\end{equation}
	The definitions of $\lambda$ and $L$ give
	\[
	\lambda L\geq\frac{ah}{8}\frac{16}{h}(r_n+2)
	=2a(r_n+2).
	\]
	Thus $-\lambda dL+ad(\ell+1)\leq-ad(r_n+3)\leq 0$, and then
	\eqref{eq:relative-tail-log} is at most
	$\log C-cd+C_pr_n$. Because $r_n=o(d)$, this is at most $-cd/2$
	for all sufficiently large $n$. We have proved
	\begin{equation}\label{eq:relative-error}
	\sup_{\substack{Q\in[0,1),\ e\neq f\\\dist_{G_n}(e,f)\leq r_n}}
	\frac{|E_Q(e,f)|}{\mathfrak M_{n,Q,e,f}}\longrightarrow0.
	\end{equation}
	The common factor $\tau=1-Q$ cancels in both comparisons. No lower
	bound on $1-Q$ has been imposed.
	
	For sufficiently large $n$, the supremum in \eqref{eq:relative-error}
	is less than $1/2$, so $\Lhat_Q(e,f)<0$. The exact sign identity
	\eqref{eq:sign-equivalence} proves the theorem. The same argument gives
	the relative asymptotic formula \eqref{eq:intro-asymptotic}.
\end{proof}

Every estimate in this proof depends on the graph only through its
regular degree, its number of vertices, and the common isoperimetric
parameters. In particular, the threshold can be chosen uniformly over
all graphs satisfying these numerical hypotheses. This observation
will be used for the random-regular example below.

\subsection{Uniform normalized expansion and deterministic examples}
\label{subsec:spectral-examples}
For a finite $d$-regular graph with $d\geq1$, define
\begin{equation}\label{eq:normalized-expansion}
h_E(G):=\min_{1\leq|S|\leq N/2}\frac{|\partial_GS|}{d|S|}.
\end{equation}

\begin{corollary}\label{cor:expanders}
	Suppose $d_n\to\infty$ and $h_E(G_n)\geq h_0>0$ for all sufficiently
	large $n$. Then the conclusion of Theorem~\ref{thm:main} holds for every
	sequence $r_n\log d_n=o(d_n)$.
\end{corollary}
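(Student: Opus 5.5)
The plan is to reduce Corollary~\ref{cor:expanders} directly to Theorem~\ref{thm:main} by verifying the two-scale condition \eqref{eq:two-scale-isoperimetry} with $h=h_0/2$. Nothing beyond this verification is needed: the theorem's conclusion depends on the graph only through \eqref{eq:degree-limit} and \eqref{eq:two-scale-isoperimetry} together with \eqref{eq:distance-assumption}. For this choice of $h$, the scale $L_n=\lceil 16(r_n+2)/h\rceil$ from \eqref{eq:L-definition} is simply whatever it is; no relation between $L_n$ and the graph is needed beyond what the theorem already derives in \eqref{eq:scale-consequences}.

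Fix $n$ large enough that $h_E(G_n)\geq h_0$. Let $S$ satisfy $1\leq|S|\leq N_n/2$. By definition \eqref{eq:normalized-expansion},
\[
|\partial S|\geq h_0d_n|S|=\tfrac{h_0}{2}d_n|S|+\tfrac{h_0}{2}d_n|S|.
\]
We bound the two halves separately. The first half satisfies $\tfrac{h_0}{2}d_n|S|\geq\tfrac{h_0}{2}|S|$ because $d_n\geq1$. The second half satisfies $\tfrac{h_0}{2}d_n|S|\geq\tfrac{h_0}{2}d_n m_{L_n}(S)$ because $m_{L_n}(S)=\min\{|S|,L_n\}\leq|S|$. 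Adding these gives
\[
|\partial S|\geq \tfrac{h_0}{2}\bigl(|S|+d_nm_{L_n}(S)\bigr),
\]
which is exactly \eqref{eq:two-scale-isoperimetry} with $h=h_0/2$, and it holds for every choice of $L_n$.

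With $h=h_0/2$ fixed, every hypothesis of Theorem~\ref{thm:main} holds for any $(r_n)$ with $r_n\log d_n=o(d_n)$. The theorem then yields strict negative covariance for all large $n$, uniformly in $Q\in[0,1)$ and in edge pairs at distance at most $r_n$. There is no real obstacle in this argument. The only point needing care is that $h$ must be a fixed constant independent of $n$, and it is, since $h=h_0/2$. All constants in the theorem then depend only on $p$ and $h_0$.
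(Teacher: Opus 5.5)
Your proposal is correct and is essentially the paper's proof: both take $h=h_0/2$ and use $m_{L_n}(S)\leq|S|$ together with $d_n\geq1$ to get $h(|S|+d_nm_{L_n}(S))\leq h_0d_n|S|\leq|\partial S|$ for every $L_n$. The paper phrases the same step via $d_n+1\leq 2d_n$, where you split $h_0d_n|S|$ into two halves.
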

\begin{proof}
	Choose $h=h_0/2$. Since $\min\{|S|,L_n\}\leq|S|$ and
	$d_n+1\leq2d_n$,
	\[
	h\bigl(|S|+d_n\min\{|S|,L_n\}\bigr)
	\leq h_0d_n|S|\leq|\partial_{G_n}S|.
	\]
	Thus the graph condition holds for every value of $L_n$.
\end{proof}

The following elementary spectral criterion supplies concrete examples.
Write the eigenvalues of the adjacency matrix of the $d$-regular graph
$G$ as
\[
d=\lambda_1(G)\geq\lambda_2(G)\geq\cdots\geq\lambda_N(G).
\]

\begin{lemma}\label{lem:spectral-cut}
	For $S\subseteq V$ of size $k$,
	\begin{equation}\label{eq:spectral-cut}
	|\partial_GS|\geq(d-\lambda_2(G))k(1-k/N).
	\end{equation}
	Consequently, $h_E(G)\geq(d-\lambda_2(G))/(2d)$.
\end{lemma}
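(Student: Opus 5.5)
The plan is the standard variational (Rayleigh quotient) argument for the adjacency matrix $A$ of $G$. Let $S\subseteq V$ with $|S|=k$. If $k=0$ or $k=N$ both sides of \eqref{eq:spectral-cut} vanish, so assume $1\leq k\leq N-1$. The test vector is
\[
x:=\one_S-\frac kN\one_V,
\]
which is orthogonal to the all-ones vector $\one_V$, the Perron eigenvector for $\lambda_1(G)=d$. Since $A$ is symmetric, the Courant--Fischer theorem then gives $x^{\mathsf T}Ax\leq\lambda_2(G)\,x^{\mathsf T}x$.

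Both quadratic forms are evaluated through the Laplacian $dI-A$ of the regular graph $G$, using
\[
x^{\mathsf T}(dI-A)x=\sum_{\{u,v\}\in E}(x_u-x_v)^2 .
\]
Adding a constant vector leaves the differences $x_u-x_v$ unchanged. Hence the right-hand side equals $\sum_{\{u,v\}\in E}(\one_S(u)-\one_S(v))^2=|\partial_GS|$. For the norm,
\[
x^{\mathsf T}x=k\Bigl(1-\frac kN\Bigr)^2+(N-k)\frac{k^2}{N^2}=k\Bigl(1-\frac kN\Bigr).
\]
Combining these with the variational inequality gives
\[
|\partial_GS|=d\,x^{\mathsf T}x-x^{\mathsf T}Ax\geq(d-\lambda_2(G))\,x^{\mathsf T}x=(d-\lambda_2(G))\,k(1-k/N),
\]
which is \eqref{eq:spectral-cut}.

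For the consequence, take $1\leq k\leq N/2$, so that $1-k/N\geq1/2$. Since $\lambda_2(G)\leq d$, we may multiply by this factor and obtain $|\partial_GS|\geq(d-\lambda_2(G))k/2$. Dividing by $dk$ and minimizing over $S$ in the definition \eqref{eq:normalized-expansion} yields $h_E(G)\geq(d-\lambda_2(G))/(2d)$.

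No step is a real obstacle. The only points to check are the orthogonality of $x$ to $\one_V$, which is what justifies using $\lambda_2(G)$ rather than $\lambda_1(G)$, and the fact that the sign $d-\lambda_2(G)\geq0$ is used when inserting the factor $1/2$.
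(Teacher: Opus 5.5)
Your proposal is correct and takes essentially the paper's route: the paper only cites the standard spectral edge-expansion bound of Alon and Milman, whose proof is this Rayleigh-quotient argument with the test vector $\one_S-\frac kN\one_V\perp\one_V$, where $d-\lambda_2(G)$ is the Laplacian spectral gap of the $d$-regular graph. Your step $x^{\mathsf T}Ax\leq\lambda_2(G)\,x^{\mathsf T}x$ on $\one_V^\perp$ is justified because $\one_V$ is an eigenvector for the top eigenvalue $d$, and this also covers the disconnected case, where $\lambda_2(G)=d$ and the bound is trivial.
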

\begin{proof}

		This is the standard spectral bound for edge expansion; see
		\cite[Lemma~2.1]{AlonMilman1985} and the discussion of regular graphs
		in Section~3 therein.

\end{proof}

This is a one-sided eigenvalue criterion. Replacing $\lambda_2$ by the
largest absolute nonconstant eigenvalue would lose the useful bound
for bipartite graphs. In particular, any high-degree family with
$\lambda_2(G_n)\leq(1-\varepsilon)d_n$, for fixed $\varepsilon>0$,
satisfies Corollary~\ref{cor:expanders}. The following families illustrate
the range of the hypothesis.

\paragraph{\bfseries Complete and balanced complete multipartite graphs.}
For $K_N$, $d=N-1$ and
$|\partial S|=|S|(N-|S|)\geq(d+1)|S|/2$ when $|S|\leq N/2$.
Thus \eqref{eq:two-scale-isoperimetry} holds directly with $h=1/2$.
More generally, fix an integer $r\geq2$ and consider the balanced
complete $r$-partite graph $K_{m,\ldots,m}$, with $r$ parts of size
$m\ge2$. Ordering the vertices by parts, its adjacency matrix is
\[
A=(J_r-I_r)\otimes J_m,
\]
where $J_t$ denotes the $t\times t$ all-ones matrix and $\otimes$ denotes Kronecker product. The standard spectral formula for Kronecker products
\cite[Section~4.2]{HornJohnson1991} gives eigenvalues
$(r-1)m$, $-m$ with multiplicity $r-1$, and $0$ with multiplicity
$r(m-1)$. Thus
$d=(r-1)m$ and $\lambda_2(G)=0$ for $m\geq2$, so
Lemma~\ref{lem:spectral-cut} gives $h_E(G)\geq1/2$.
This includes $K_{m,m}$. The degrees diverge as $m\to\infty$, while
the diameters are at most two.

\paragraph{\bfseries Fixed-dimensional Hamming graphs with growing alphabet.}
Fix $k\geq2$ and let
\[
H(k,q):=\underbrace{K_q\square\cdots\square K_q}_{k\text{ factors}}
\]
be the Hamming graph; see \cite[Section~9.2]{BrouwerCohenNeumaier1989}.
Its vertex set is $[q]^k$, with two vertices adjacent if they differ
in exactly one coordinate. The graph is $d=k(q-1)$ regular. The adjacency matrix of a Cartesian product is the Kronecker sum of the adjacency matrices of its factors; see \cite[Section~4.4]{HornJohnson1991}. Hence its eigenvalues are the corresponding sums of factor eigenvalues. Since $K_q$ has eigenvalues $q-1$ and $-1$, it follows that
\[
\lambda_2(H(k,q))=(k-1)(q-1)-1=d-q.
\]
Thus
\[
h_E(H(k,q))
\geq\frac{d-\lambda_2(H(k,q))}{2d}
=\frac{q}{2k(q-1)}
\geq\frac{1}{2k}.
\]
For fixed $k$, the degree tends to infinity with $q$, while the diameter is $k$.
This regime
is distinct from the fixed-alphabet, growing-dimension products treated
in the next subsection.

\paragraph{\bfseries Paley graphs.}
Let $q\equiv1\pmod4$ be a prime power, and let
$\operatorname{Pal}(q)$ be the Paley graph on $\mathbb F_q$: distinct
$x,y\in\mathbb F_q$ are adjacent if $x-y$ is a square in
$\mathbb F_q^\times$; see, for example,
\cite[pp.~221--222]{GodsilRoyle2001}. Since $-1$ is a square in
$\mathbb F_q$, this relation is symmetric. The graph is
$d_q=(q-1)/2$ regular. Moreover, two distinct vertices have
$(q-5)/4$ common neighbors if they are adjacent and $(q-1)/4$ common
neighbors otherwise. Equivalently, if $A$ is the adjacency matrix of
$\operatorname{Pal}(q)$, then
\[
A^2+A=\frac{q-1}{4}(I+J).
\]
Since $J$ vanishes on $\one^\perp$, every eigenvalue of $A$ on
$\one^\perp$ satisfies
\[
\lambda^2+\lambda-\frac{q-1}{4}=0.
\]
Thus the adjacency eigenvalues other than $d_q$ are
\[
\frac{-1+\sqrt q}{2}
\qquad\text{and}\qquad
\frac{-1-\sqrt q}{2}.
\]
Consequently, Lemma~\ref{lem:spectral-cut} gives
\begin{equation}\label{eq:Paley-expansion}
h_E(\operatorname{Pal}(q))
\geq\frac{q-\sqrt q}{2(q-1)}
\longrightarrow\frac12.
\end{equation}
Finally, since any two nonadjacent vertices have $(q-1)/4>0$ common
neighbors, $\operatorname{Pal}(q)$ has diameter two. Thus this family
also yields an all-pairs application.

\paragraph{\bfseries Blow-ups of a fixed regular graph.}
Fix a connected $\Delta$-regular graph $H$ with $\Delta\geq1$. For
$m\geq1$, let
\[
H^{(m)}:=H[\overline K_m],
\]
the lexicographic product of $H$ with the edgeless graph on $m$
vertices; see \cite[Definition~2]{Sabidussi1959}. Equivalently,
$H^{(m)}$ is obtained by replacing each vertex $v\in V(H)$ by an
independent set $C_v$ of size $m$ and joining $C_u$ and $C_v$
completely whenever $uv\in E(H)$.
Its adjacency matrix is
\[
A_{H^{(m)}}=A_H\otimes J_m.
\]
Since $J_m$ has eigenvalues $m$ and $0$, the standard spectral formula
for Kronecker products \cite[Section~4.2]{HornJohnson1991} shows that
$H^{(m)}$ is $\Delta m$ regular and, for $m\geq2$,
\[
\lambda_2(H^{(m)})=m\max\{\lambda_2(H),0\}.
\]
Consequently, Lemma~\ref{lem:spectral-cut} gives
\[
h_E(H^{(m)})
\geq
\frac{\Delta-\max\{\lambda_2(H),0\}}{2\Delta}>0.
\]
The strict inequality follows from $\lambda_2(H)<\Delta$, since $H$ is
connected and $\Delta$-regular. The assumption $\Delta\geq1$ excludes
the one-vertex graph, whose blow-ups have degree zero. Moreover,
\[
\diam(H^{(m)})\leq\max\{2,\diam(H)\}.
\]

Each of the preceding deterministic families has diverging degree and
uniformly bounded diameter. Taking $r_n$ to be a common diameter bound,
Theorem~\ref{thm:main} therefore yields strictly negative covariance for
every distinct edge pair in all sufficiently large members of each
family, uniformly over $Q\in[0,1)$.

\subsection{Cartesian powers: hypercubes and fixed-side tori}
\label{subsec:product-examples}
The two-scale condition also covers products whose normalized expansion
tends to zero. We give a formulation that includes the two principal
examples and uses the same proof for any fixed regular factor.

\begin{corollary}\label{cor:products}
	Let $B$ be a fixed finite connected simple $\Delta$-regular graph with
	$q\geq2$ vertices and $\Delta\geq1$. Let $G_n=B^{\square n}$ be the $n$-fold Cartesian power of $B$, so that
	$N_n=q^n$ and $d_n=n\Delta$. For every sequence of nonnegative integers
	$r_n$ satisfying $r_n\log d_n=o(d_n)$, the conclusion of
	Theorem~\ref{thm:main} holds on $G_n$.
\end{corollary}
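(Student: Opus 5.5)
We will verify the two-scale hypothesis \eqref{eq:two-scale-isoperimetry} for $G_n=B^{\square n}$ with a constant $h>0$ that depends only on $B$, and then apply Theorem~\ref{thm:main}. Since $d_n=n\Delta\to\infty$, \eqref{eq:degree-limit} holds. The ordering matters: $h$ is fixed first, from $B$ alone. Only afterwards is $L_n=\lceil 16(r_n+2)/h\rceil$ defined, and by \eqref{eq:scale-consequences} it satisfies $L_n=O(r_n+1)$ and $L_n\log d_n=o(d_n)$. In particular $\log_q L_n=o(n)$.

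\emph{Input inequality.} The input is a Harper-type edge-isoperimetric inequality for Cartesian powers. We take it in the form of the product-graph inequality of Diskin and Samotij \cite[Section~3.5, equation~(8)]{DiskinSamotij2025}: there is $c_B>0$, depending only on $B$, such that
\[
|\partial_{G_n}S|\geq c_B\,|S|\bigl(n-\log_q|S|\bigr)\qquad(S\subseteq V(G_n)).
\]
For the hypercube this is the classical bound $|\partial S|\geq|S|(n-\log_2|S|)$. If the cited statement differs in normalization, we would recover this form by tensorizing the edge expansion of the fixed factor $B$. This is the standard induction on $n$, slicing $S$ along the last coordinate and using $|\partial_B A|\geq c|A|\log_q(q/|A|)$-type bounds in $B$. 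Pinning down this inequality with a constant independent of $n$ is the one step I expect to need care. Everything after it is elementary.

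\emph{Verification.} Write $k=|S|\leq N_n/2=q^n/2$ and $g(k)=k(n-\log_q k)$. We prove three lower bounds and then combine them.
\begin{enumerate}
\item[(a)] Since $n-\log_q k\geq\log_q2$ on this range, $|\partial S|\geq c_B(\log_q 2)\,k$. This bounds the $h|S|$ term.
\item[(b)] If $k\leq L_n$, then $\log_q k\leq\log_q L_n=o(n)$. Hence $|\partial S|\geq \tfrac12 c_B nk=\tfrac{c_B}{2\Delta}d_nk$ for large $n$. This is degree-scale expansion, and it bounds $d_n m_{L_n}(S)=d_nk$.
\item[(c)] If $L_n\leq k\leq q^n/2$, we use that $g$ is concave, so its minimum over $[L_n,q^n/2]$ is attained at an endpoint. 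We have $g(L_n)\geq \tfrac12 nL_n$ for large $n$, and $g(q^n/2)=\tfrac12 q^n\log_q2\geq nL_n$ for large $n$, because $L_n=q^{o(n)}$. Hence $|\partial S|\geq \tfrac{c_B}{2\Delta}d_nL_n$.
\end{enumerate}
Averaging (a) with whichever of (b) or (c) applies gives $|\partial S|\geq h\bigl(|S|+d_n m_{L_n}(S)\bigr)$ with
\[
h=\tfrac12\min\{c_B\log_q2,\ c_B/(2\Delta)\}.
\]
This constant depends only on $B$, which is consistent with defining $L_n$ from it. The threshold ``large $n$'' used in (b) and (c) depends on $r_n$ only through $\log_q L_n=o(n)$.

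\emph{Conclusion.} With \eqref{eq:two-scale-isoperimetry} established for all sufficiently large $n$, Theorem~\ref{thm:main} applies directly for every admissible $(r_n)$. The hypercube ($B=K_2$) and the fixed-side tori ($B=C_q$ with $q\geq3$) are the special cases of principal interest.
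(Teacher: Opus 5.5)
Your proposal is correct and follows the paper's proof essentially step for step. The shared steps are: the Diskin--Samotij bound $|\partial_{G_n}S|\geq y_B|S|(n-\log_q|S|)$; degree-scale expansion for $|S|\leq L_n$ via $\log_q L_n\leq n/2$; concavity of $t\mapsto t(n-\log_q t)$ on $[L_n,q^n/2]$ with the endpoint bounds $g(L_n)\geq nL_n/2$ and $g(q^n/2)=\tfrac12 q^n\log_q2\geq nL_n$; and averaging with $g(u)\geq u\log_q2$. You even obtain the same constant $h=\min\{c_B/(4\Delta),\,c_B\log_q2/2\}$, fixed from $B$ alone before $L_n$ is defined, exactly as in the paper.
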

\begin{proof}
	For $1\leq k\leq q-1$, put
	\[
	i_k(B):=\min_{|A|=k}\frac{|\partial_BA|}{k},\qquad
	y_B:=\min_{1\leq k\leq q-1}
	\frac{i_k(B)\log q}{\log q-\log k}>0.
	\]
	The positivity follows from connectedness of the fixed graph $B$.
	Diskin and Samotij's product inequality gives
	\begin{equation}\label{eq:product-profile}
	|\partial_{G_n}S|\geq y_B|S|(n-\log_q|S|)
	\qquad(\varnothing\neq S\subseteq V(G_n));
	\end{equation}
	see \cite[Section~3.5, equation~(8)]{DiskinSamotij2025}.
	Let $\sigma=\log_q2$ and choose
	\begin{equation}\label{eq:product-h}
	h:=\min\left\{\frac{y_B}{4\Delta},\frac{y_B\sigma}{2}\right\}.
	\end{equation}
	With $L=L_n$ from \eqref{eq:L-definition}, the distance assumption
	implies $L=o(n)$. Thus, for all sufficiently large $n$,
	\[
	L<q^n/2,\qquad \log_qL\leq n/2,
	\qquad \sigma q^n\geq nL.
	\]
	
	Write $u=|S|\leq q^n/2$. If $u\leq L$, then
	\eqref{eq:product-profile} gives
	$|\partial S|\geq y_Bnu/2\geq h(1+n\Delta)u$,
	using $1+n\Delta\leq2n\Delta$ and \eqref{eq:product-h}.
	If $L\leq u\leq q^n/2$, the function
	$g(t)=t(n-\log_qt)$ is concave, since $g''(t)=-1/(t\log q)<0$.
	It satisfies
	\[
	g(u)\geq\sigma u,
	\qquad g(u)\geq\min\{g(L),g(q^n/2)\}\geq nL/2.
	\]
	Taking half of each bound yields
	\[
	|\partial S|\geq y_Bg(u)
	\geq\frac{y_B\sigma}{2}u+\frac{y_B}{4}nL
	\geq h(u+n\Delta L).
	\]
	The two ranges verify \eqref{eq:two-scale-isoperimetry}, completing
	the proof.
\end{proof}

For $B=K_2$, the graph is the hypercube $\mathsf Q_n$, with
$d_n=n$, $y_B=1$, and the familiar bound
\[
|\partial_{\mathsf Q_n}S|\geq|S|(n-\log_2|S|);
\]
see also \cite{Harper1964,Hart1976}. For $B=C_m$, where $m\geq3$ is
fixed, the graph is the discrete torus $(\mathbb Z/m\mathbb Z)^n$
with nearest-neighbor edges and degree $2n$. A nonempty proper subset
of the cycle has at least two boundary edges, attained by an interval.
Therefore
\[
y_{C_m}=\min_{1\leq k\leq m-1}
\frac{2\log m}{k(\log m-\log k)}>0.
\]
These are precisely the fixed-side tori, rather than tori whose side
length varies with $n$.

For any fixed factor $B$, a coordinate slice
$S=A\times V(B)^{n-1}$ with $0<|A|\leq q/2$ has
\[
\frac{|\partial_{G_n}S|}{d_n|S|}
=\frac{|\partial_BA|}{n\Delta|A|}\longrightarrow0.
\]
Thus uniform normalized expansion would exclude these products. The
large-set part of the two-scale condition is needed to include them.

\subsection{Uniformly random simple regular graphs}\label{subsec:random-regular}
For integers $N\geq1$ and $0\leq d\leq N-1$ with $Nd$ even, let
\[
\Omega_{N,d}:=\{G: G\text{ is a simple }d\text{-regular graph on }[N]\}.
\]
These feasibility conditions ensure $\Omega_{N,d}\neq\varnothing$.
Equip it with the sigma-algebra $2^{\Omega_{N,d}}$ and the uniform law
\begin{equation}\label{eq:uniform-regular-law}
\mathbb P_{N,d}(\mathcal B)=\frac{|\mathcal B|}{|\Omega_{N,d}|},
\qquad \mathcal B\subseteq\Omega_{N,d}.
\end{equation}
The vertices are fixed and labeled; only the edge set is random.

\begin{proposition}\label{prop:random-iso}
	Let $(N_n,d_n)$ satisfy the feasibility conditions above and
	$d_n\to\infty$. Fix $0<h<1/2$, and let $(L_n)$ be any deterministic
	sequence of nonnegative integers. Write
	$\Omega_n=\Omega_{N_n,d_n}$ and
	$\mathbb P_n=\mathbb P_{N_n,d_n}$, and let
	\begin{equation}\label{eq:random-iso-event}
	\begin{split}
	\mathcal A_n(h,L_n):=\{G\in\Omega_n:\ &
	|\partial_GS|\geq h(|S|+d_n\min\{|S|,L_n\})\ \\
	&\text{for all }S\subseteq[N_n],\quad1\leq|S|\leq N_n/2\}.
	\end{split}
	\end{equation}
	For all sufficiently large $n$,
	\begin{equation}\label{eq:random-iso-prob}
	\mathbb P_n(\mathcal A_n(h,L_n))\geq1-N_n^{-1}.
	\end{equation}
	The threshold is independent of $(L_n)$; no further relation between
	$N_n$ and $d_n$ is required.
\end{proposition}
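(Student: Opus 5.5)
The plan is to reduce the two-scale inequality to a single uniform normalized-expansion statement, which removes every dependence on $(L_n)$. Since $\min\{|S|,L_n\}\leq|S|$ and $1\leq d_n$, the event $\mathcal A_n(h,L_n)$ contains the event
\[
\mathcal B_n:=\bigl\{G\in\Omega_n:\ |\partial_GS|\geq h(d_n+1)|S|\ \text{for all }1\leq|S|\leq N_n/2\bigr\},
\]
and $\mathcal B_n$ does not involve $L_n$. This is the source of the claimed independence of the threshold from $(L_n)$. Fix $h'\in(h,1/2)$. For large $n$ we have $h(d_n+1)\leq h'd_n$, so it suffices to prove that, with probability at least $1-N_n^{-1}$,
\[
|\partial_GS|\geq h'd_n|S|\qquad\text{for all }1\leq|S|\leq N_n/2 .
\]

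The first route is spectral, via Lemma~\ref{lem:spectral-cut}. That lemma gives $|\partial S|\geq(d-\lambda_2)|S|/2$ when $|S|\leq N/2$. The requirement is then $\lambda_2(G)\leq(1-2h')d$, that is, $\lambda_2=o(d)$ suffices. For $d_n\leq N_n/2$, I would invoke the known spectral-gap bounds for uniform random regular graphs: $\lambda_2=O(\sqrt d)$ with high probability. This is due to Cook--Goldstein--Johnson in the sparse-to-moderate range and to Tikhomirov--Youssef (together with Sarid) up to linear degree. For $d_n>N_n/2$, I would pass to the complement. The complement $\bar G$ is uniform on the $(N-1-d)$-regular graphs, and $\lambda_2(G)=-1-\lambda_N(\bar G)$, so a two-sided bound $|\lambda_N(\bar G)|\leq C\sqrt{N}$ gives $\lambda_2(G)=O(\sqrt N)=O(\sqrt d)=o(d)$. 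If instead $N-1-d$ is bounded, the complement has bounded degree, so all its eigenvalues are $O(1)$, and the conclusion is the same.

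The main obstacle is the probability level $1-N_n^{-1}$, uniformly over all degree ranges. Published spectral results often state only ``with high probability'' or $1-N^{-c}$ for an unspecified $c$, and some small-$N$ or bounded-$d$ regimes are awkward. Since we need only $\lambda_2\leq(1-2h')d$, not $O(\sqrt d)$, there is a great deal of slack. If the cited tail bounds are insufficient, I would fall back on a direct first-moment argument. Union-bound over $k=|S|\leq N/2$ and over the $\binom Nk$ sets $S$, and bound $\mathbb P(|\partial S|<h'dk)$ using a switching or edge-distribution tail estimate for uniform regular graphs, in the style of McKay or Krivelevich--Sudakov--Vu--Wormald.

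In the fallback, the expected value of $|\partial S|$ is $dk(N-k)/(N-1)\geq dk/2$. A deviation of a constant fraction $(1/2-h')$ should therefore have probability at most $e^{-c\,dk}$ for small $k$, and $e^{-c\,dk\min\{1,\ldots\}}$ otherwise. Against $\binom Nk\leq e^{k\log(eN/k)}$, this works directly when $d\gg\log N$. When $d$ is small relative to $\log N$, I would split sizes. For small sets, use $|\partial S|=dk-2e(S)$ and bound the probability that some $k$-set spans more than $(1-h')dk/2$ edges; this is a sparse-subgraph density event, handled by the switching estimate $\mathbb P(\text{fixed }m\text{ edges present})\leq(Cd/N)^m$. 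For large sets, use the concentration bound. Choosing the split carefully should make the total failure probability $o(N^{-1})$ once $d_n$ exceeds a threshold depending only on $h$. That threshold is in terms of $d$, so the bound does not depend on $L_n$ and requires no relation between $N_n$ and $d_n$.
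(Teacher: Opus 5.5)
Your primary spectral route is essentially the paper's proof: the paper also reduces to $|\partial_G S|\geq h(d_n+1)|S|$, applies Lemma~\ref{lem:spectral-cut} with $\lambda_2\leq\lambda_{\rm abs}$, invokes Cook--Goldstein--Johnson for $1\leq d\leq N^{2/3}$ and Tikhomirov--Youssef for $N^{1/2}\leq d\leq N/2$, and handles $d>N/2$ by complementation via $A_G+A_{\overline G}=J-I$. Your concern about the probability level is unfounded, because both theorems give failure probability at most $N^{-K}$ for any fixed $K$, with constants uniform in $d$ over their ranges (the paper uses $C_0=K=1$ in the former and $\alpha=1/2$, $m=1$ in the latter), so the first-moment fallback is never needed.
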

\begin{proof}
	For $N\geq2$, let
	\[
	\lambda_{\rm abs}(G):=\|A_G|_{\one^\perp}\|
	=\max_{2\leq i\leq N}|\lambda_i(G)|.
	\]
	This definition applies also to disconnected graphs. We first show that
	there is an absolute $C$ such that, uniformly over feasible $d$ and
	all sufficiently large $N$,
	\begin{equation}\label{eq:random-spectral}
	\mathbb P_{N,d}\{\lambda_{\rm abs}(G)>1+C\sqrt d\}\leq N^{-1}.
	\end{equation}
	Cook, Goldstein, and Johnson's Theorem~1.1, with $C_0=K=1$, gives
	\[
	\mathbb P_{N,d}\{\lambda_{\rm abs}(G)>C_1\sqrt d\}\leq N^{-1}
	\qquad(1\leq d\leq N^{2/3})
	\]
	for all sufficiently large $N$ \cite{CookGoldsteinJohnson2018}.
	Tikhomirov and Youssef's Theorem~A, with $\alpha=1/2$ and $m=1$,
	gives the same bound with a constant $C_2$ when
	$N^{1/2}\leq d\leq N/2$ \cite{TikhomirovYoussef2019}.
	The constants and lower thresholds on $N$ are independent of $d$ in
	the respective ranges. These ranges overlap and cover $1\leq d\leq N/2$.
	Taking $C=\max\{C_1,C_2,1\}$ yields the bound with $C\sqrt d$
	throughout that range. The case $d=0$ is deterministic.
	
	If $d>N/2$, write $a_0=N-1-d$ and let $\overline G$ be the complement.
	Complementation maps the uniform law on $\Omega_{N,d}$ to that on
	$\Omega_{N,a_0}$. Since
	$A_G+A_{\overline G}=J-I$, on $\one^\perp$ we have
	\[
	\lambda_{\rm abs}(G)\leq1+\lambda_{\rm abs}(\overline G).
	\]
	Here $a_0<N/2$ and $a_0<d$. The previously established estimate gives
	\eqref{eq:random-spectral} in this range as well.
	
	Lemma~\ref{lem:spectral-cut}, with
	$\lambda_2(G)\leq\lambda_{\rm abs}(G)$, gives simultaneously for all
	$1\leq|S|\leq N/2$,
	\begin{equation}\label{eq:random-spectral-cut}
	|\partial_GS|\geq\frac{d-\lambda_{\rm abs}(G)}2\,|S|.
	\end{equation}
	Because $d_n\to\infty$ and $h<1/2$, eventually
	\[
	\frac{d_n-1-C\sqrt{d_n}}2\geq h(d_n+1).
	\]
	On the complementary event to that in \eqref{eq:random-spectral},
	\eqref{eq:random-spectral-cut} then implies
	\[
	|\partial_GS|\geq h(d_n+1)|S|
	\geq h(|S|+d_n\min\{|S|,L_n\}).
	\]
	Since $N_n\geq d_n+1\to\infty$, \eqref{eq:random-spectral} applies
	and proves \eqref{eq:random-iso-prob}. The same spectral event works
	simultaneously for every value of $L_n$.
\end{proof}

The probability bound in Proposition~\ref{prop:random-iso} is understood separately
under $\mathbb P_n$ for each $n$. If the graphs $G_n$ are realized on
a common probability space with marginals $\mathbb P_n$ and
\[
\sum_{n=1}^\infty N_n^{-1}<\infty,
\]
then the first Borel--Cantelli lemma, which requires no independence,
implies
\[
G_n\in\mathcal A_n(h,L_n)
\qquad\text{eventually almost surely}.
\]

\begin{corollary}\label{cor:random-NC}
	Fix $p\in(0,1)$ and feasible $(N_n,d_n)$ with $d_n\to\infty$.
	Let $r_n$ be nonnegative integers with $r_n\log d_n=o(d_n)$.
	For all sufficiently large $n$, with $\mathbb P_n$-probability at least
	$1-N_n^{-1}$ the graph $G$ is connected and satisfies
	\[
	\Cov_{\mu_{G,p,Q}}(X_e,X_f)<0
	\quad\text{for every }Q\in[0,1)
	\text{ and every }e\neq f\text{ with }\dist_G(e,f)\leq r_n.
	\]
\end{corollary}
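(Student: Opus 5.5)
The plan is to combine Proposition~\ref{prop:random-iso} with the deterministic Theorem~\ref{thm:main}. The remark after the proof of Theorem~\ref{thm:main} says that every threshold depends on the graph only through its degree $d$, its number of vertices $N$, the isoperimetric constant $h$, and the sequences $(r_n)$, $(L_n)$. This uniformity is what lets a single deterministic $n$-threshold serve a random graph.

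First fix $h:=1/4$, which lies in $(0,1/2)$. Define $L_n:=\lceil 16(r_n+2)/h\rceil$ as in \eqref{eq:L-definition}; this is a deterministic sequence of nonnegative integers. Proposition~\ref{prop:random-iso} then gives, for all $n$ beyond a threshold $n_1$ that does not depend on $(L_n)$, that $\mathbb P_n(\mathcal A_n(h,L_n))\ge 1-N_n^{-1}$.

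Second, every $G\in\mathcal A_n(h,L_n)$ is connected. If $G$ were disconnected, some component $C$ would satisfy $1\le|C|\le N_n/2$ and $\partial_G C=\varnothing$. But the defining inequality of $\mathcal A_n(h,L_n)$ requires $|\partial_G C|\ge h|C|>0$, a contradiction. Hence each $G\in\mathcal A_n(h,L_n)$ is a connected simple $d_n$-regular graph satisfying \eqref{eq:two-scale-isoperimetry}, with $d_n\to\infty$ and $r_n\log d_n=o(d_n)$.

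Third, we apply the proof of Theorem~\ref{thm:main} uniformly. Its $n$-threshold comes from a finite list of conditions: \eqref{eq:L-range}, the bounds in Lemmas~\ref{lem:boundary-sum}--\ref{lem:exceptional} and \ref{lem:small-remainder}, $\eta_L\le 1/2$, and the comparisons \eqref{eq:relative-small-log}--\eqref{eq:relative-tail-log}. Each of these is an inequality in $d_n$, $N_n$, $L_n$, $r_n$, and constants depending only on $p$ and $h$. Lemma~\ref{lem:cut-count}, for example, uses only $d$-regularity and the expansion constant. None of the conditions involves the particular edge set. So there is $n_2$ such that for every $n\ge n_2$, every graph in $\mathcal A_n(h,L_n)$, every $Q\in[0,1)$, and every pair $e\ne f$ with $\dist_G(e,f)\le r_n$, we get $\Cov_{\mu_{G,p,Q}}(X_e,X_f)<0$. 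Taking $n\ge\max\{n_1,n_2\}$ gives the corollary.

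The main obstacle is the third step: checking that no estimate depends on the graph beyond these numerical parameters. The key case is the $N\ge d+1$ and $e^{-cN}$ terms. These are uniform because $N_n$ is deterministic and $N_n\ge d_n+1$ holds automatically. I would handle this step by going through the threshold conditions listed above, as the post-proof remark indicates.
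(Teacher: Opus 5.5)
Your proposal is correct and matches the paper's proof essentially step for step. You fix some $h\in(0,1/2)$ with the corresponding $L_n$ and invoke Proposition~\ref{prop:random-iso}. You deduce connectedness by applying the isoperimetric inequality to a component of size at most $N_n/2$, and then apply Theorem~\ref{thm:main} on that event using the graph-uniformity of its threshold, which the paper notes after \eqref{eq:relative-error}; your explicit list of threshold conditions just spells out that remark.
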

\begin{proof}
	Choose any fixed $0<h<1/2$ and set
	$L_n=\lceil16(r_n+2)/h\rceil$. Proposition~\ref{prop:random-iso}
	provides the graph condition with the stated probability. That condition
	also implies connectedness, since a disconnected graph has a nonempty
	component of size at most $N_n/2$ and zero boundary. Apply
	Theorem~\ref{thm:main} on this event, using the graph-uniformity of the
	estimates following \eqref{eq:relative-error}.
\end{proof}

\subsection{An all-pairs criterion}\label{subsec:diameter}

The preceding deterministic examples in Section~\ref{subsec:spectral-examples} have uniformly bounded diameter. More generally, the distance restriction in Theorem~\ref{thm:main}
can be removed whenever the graph condition holds with
$r_n=\diam(G_n)$ and the diameter satisfies the required growth
condition.

\begin{corollary}\label{cor:diameter}
	Suppose that the graph condition of Theorem~\ref{thm:main} holds with
	$r_n=\diam(G_n)$ and the corresponding $L_n$. If
	\[
	\diam(G_n)\log d_n=o(d_n),
	\]
	then, for all sufficiently large $n$,
	\[
	\Cov_{\mu_{G_n,p,Q}}(X_e,X_f)<0
	\]
	for every $Q\in[0,1)$ and every pair of distinct edges
	$e,f\in E(G_n)$.
\end{corollary}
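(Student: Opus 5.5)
This corollary should follow directly from Theorem~\ref{thm:main} once we choose $r_n:=\diam(G_n)$. I would first check that this sequence is admissible. It consists of nonnegative integers, and the hypothesis $\diam(G_n)\log d_n=o(d_n)$ is exactly condition \eqref{eq:distance-assumption} for $r_n=\diam(G_n)$. The degree condition \eqref{eq:degree-limit} is implicit in the hypothesis that the graph condition of Theorem~\ref{thm:main} holds. The isoperimetric inequality \eqref{eq:two-scale-isoperimetry} is assumed with the $L_n$ computed from this $r_n$ via \eqref{eq:L-definition}, and the assumption is made for all sufficiently large $n$.

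Next I would observe that every pair of distinct edges is admissible. Take distinct $e,f\in E(G_n)$. By \eqref{eq:edge-distance}, $\dist_{G_n}(e,f)$ is a minimum of vertex distances $\dist_{G_n}(x,y)$ with $x\in V(e)$ and $y\in V(f)$. Each such vertex distance is at most $\diam(G_n)$, and it is finite because $G_n$ is connected. So $\dist_{G_n}(e,f)\leq r_n$ for every distinct pair.

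Theorem~\ref{thm:main} then gives a threshold $n_0$, uniform in $Q\in[0,1)$ and in the edge pair, such that for $n\geq n_0$ we have $\Cov_{\mu_{G_n,p,Q}}(X_e,X_f)<0$ for all distinct $e,f$ with $\dist_{G_n}(e,f)\leq r_n$. By the previous step this covers every distinct pair, which is the claim.

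None of these steps is a real obstacle. The only care needed is to make sure that $L_n$, and hence the isoperimetric hypothesis, is the one tied to $r_n=\diam(G_n)$ rather than to some smaller distance scale. The corollary's hypothesis states exactly this.
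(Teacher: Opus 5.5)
Your proposal is correct and follows the paper's proof exactly: set $r_n=\diam(G_n)$, observe that $\dist_{G_n}(e,f)\leq\diam(G_n)$ for every pair of distinct edges, and apply Theorem~\ref{thm:main}, whose threshold is uniform in $Q$ and in the edge pair. Your extra checks, that this $r_n$ is admissible and that $L_n$ is the one computed from it, are routine verifications the paper leaves implicit.
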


\begin{proof}
	For distinct edges $e,f\in E(G_n)$,
	\[
	\dist_{G_n}(e,f)\leq\diam(G_n)=r_n.
	\]
	Thus every edge pair satisfies the distance hypothesis of
	Theorem~\ref{thm:main}, and the conclusion follows.
\end{proof}

In particular, for each of the deterministic families considered in
Section~\ref{subsec:spectral-examples} with uniformly bounded diameter,
the all-pairs conclusion holds for all sufficiently large members of
the family.


\appendix
\section{Notation index}\label{sec:notation-index}
This index collects the notation used in the proof. In Sections~\ref{sec:estimates}
--\ref{sec:proof-examples}, the index $n$ is usually suppressed. The
location column points to a definition or a principal use. Constants
$c,C$ may change between statements and depend only on $p,h$ unless
another dependence is indicated.

\begingroup
\small
\setlength{\tabcolsep}{5pt}

\begin{longtable}{@{}>{\raggedright\arraybackslash}p{0.23\textwidth}>{\raggedright\arraybackslash}p{0.51\textwidth}>{\raggedright\arraybackslash}p{0.18\textwidth}@{}}
	\caption{Notation for the model, graph geometry, and cluster expansion.}
	\label{tab:notation-index}\\
	\toprule
	Symbol & Meaning & Location\\
	\midrule
	\endfirsthead
	\multicolumn{3}{@{}l}{\textit{Table~\ref{tab:notation-index} continued}}\\
	\toprule
	Symbol & Meaning & Location\\
	\midrule
	\endhead
	\midrule
	\multicolumn{3}{r@{}}{\textit{Continued on the next page}}\\
	\endfoot
	\bottomrule
	\endlastfoot
	\multicolumn{3}{@{}l}{\textbf{Graph and model parameters}}\\
	$G_n=(V_n,E_n)$ & Finite connected simple regular graph; $G=(V,E)$ when $n$ is fixed. & \S\ref{subsec:main}\\
	$J_{D,r}$, $R_D(H)$ & Rooted degree-correction gadget and regularization of $H$. In Section~\ref{sec:reduction}, $D$ is an odd target degree, not a forcing set. & Lemma~\ref{lem:gadget}, \eqref{eq:regularization}\\
	$N_n$, $d_n$ & Number of vertices and common degree; abbreviated $N,d$. & \S\ref{subsec:main}\\
	$E(S)$, $e(S)$ & Internal edge set of $S$ and its cardinality. & \S\ref{subsec:main}\\
	$\partial S$ & Edges with exactly one endpoint in $S$. & \S\ref{subsec:main}\\
	$\Nhood(S)$ & Closed vertex neighborhood of $S$ in $G$. & \eqref{eq:neighborhood}\\
	$V(e)$, $\dist_G(e,f)$ & Endpoint set of an edge; minimum vertex distance between two endpoint sets. & \eqref{eq:edge-distance}\\
	$r_n$, $h$, $L_n$ & Allowed edge distance, isoperimetric constant, and cutoff $\lceil16(r_n+2)/h\rceil$. & \eqref{eq:distance-assumption}--\eqref{eq:two-scale-isoperimetry}\\
	$m_L(S)$ & $\min\{|S|,L\}$. & \eqref{eq:L-definition}\\
	$p$, $s$, $a$ & Fixed edge parameter; $s=1-p$, $a=-\log s>0$. & \S\ref{subsec:conventions}\\
	$Q$, $\tau$ & Cluster parameter in $[0,1)$; $\tau=1-Q$. & \S\ref{subsec:giant}\\
	$\lambda$, $\theta$ & Fixed weights $\lambda=ah/8$, $\theta=s^{1/2}$. & \eqref{eq:lambda-theta}\\
	$\omega$, $k(\omega)$ & Open-edge configuration and number of its spanning components. & \S\ref{sec:introduction}\\
	$X_g$ & Indicator that edge $g$ is open. & \S\ref{subsec:negative-dependence}\\
	$\PP_p$, $\EE_D$ & Bernoulli bond percolation; expectation conditioned on every edge of $D$ being open. & \S\ref{subsec:constrained}\\
	$\mu_{G,p,Q}$ & Extended random-cluster measure, including the connected-conditioned endpoint $Q=0$. & \eqref{eq:extended-measure}--\eqref{eq:connected-conditioned}\\
	NC, NA, CNA, $\mathrm{CNA}^+$ & Pairwise negative correlation; negative association; conditional negative association; preservation also under external fields and projections. & \S\ref{subsec:negative-dependence}\\
	\addlinespace
	\multicolumn{3}{@{}l}{\textbf{Partition functions and exceptional configurations}}\\
	$D$, $D_S$ & A forcing set of at most two edges; $D_S=D\cap E(S)$. & \S\ref{subsec:constrained}, \eqref{eq:activity}\\
	$\Zhat_{G,p,Q}$, $\Zhat_D$ & Extended partition function; its forced-edge version $\EE_D[Q^{k-1}]$. & \eqref{eq:extended-Z}, \eqref{eq:constrained-Z}\\
	$\mathcal K_-$, $k_-$ & Open components of size at most $N/2$, and their number. & \S\ref{subsec:giant}\\
	$\cD$ & Event that no open component has more than $N/2$ vertices. This is an event, unlike the edge set $D$. & \S\ref{subsec:giant}\\
	$T_Q$, $R_D$ & Exceptional correction and its conditional expectation. & \eqref{eq:TQ}--\eqref{eq:YR}\\
	$Y_D$ & $\EE_D[Q^{k_-}]$, the exact polymer partition function; $\Zhat_D=Y_D+\tau R_D$. & \eqref{eq:YR}--\eqref{eq:Z-Y-R}\\
	$\Lhat_Q(e,f)$ & Four-term logarithmic difference of $\Zhat_D$; its sign is the covariance sign. & \eqref{eq:log-correlation}--\eqref{eq:sign-equivalence}\\
	$\Lambda_Y(e,f)$ & The corresponding four-term difference of $\log Y_D$. & \eqref{eq:Lambda-Y}\\
	\addlinespace
	\multicolumn{3}{@{}l}{\textbf{Polymers, modified activities, and clusters}}\\
	$\cP$, $\cP_{\leq}$ & Nonempty induced-connected vertex sets; those of size at most $N/2$. & \S\ref{subsec:polymer-identity}, \eqref{eq:lambda-theta}\\
	$S\sim T$, $\zeta(S,T)$ & Disjoint nonadjacent polymers are compatible; $\zeta=-\one_{\{S\not\sim T\}}$. & \S\ref{subsec:polymer-identity}, \eqref{eq:def-pair-fcn}\\
	$c(S,H)$, $\chi_-(S,H)$ & Number of components of $(S,H)$; indicator that each has size at most $N/2$. & \S\ref{subsec:polymer-identity}\\
	$w_D$, $w$ & Signed activity for forcing set $D$; $w=w_\varnothing$. & \eqref{eq:activity}\\
	$\widetilde w_D$ & $e^{\lambda d m_L(S)}w_D(S)$, only on $\cP_{\leq}$. & \eqref{eq:tilted-activity}\\
	$v_u(S)$, $R_*$ & Size-marked activity $u^{|S|}v(S)$; analytic radius $R_*=e^{\lambda/2}$. & Lemma~\ref{lem:KP-marker}\\
	$b(S)$ & Nonnegative control function in the Koteck\'y--Preiss condition. & \eqref{eq:KP}, Table~\ref{tab:KP-applications}\\
	$I_{\Gamma}$ & incompatibility graph & \S\ref{subsec:ueltschi}\\
	$\Gamma=(S_1,\ldots,S_j)$ & Ordered polymer tuple; repeated labels are allowed. & \eqref{eq:cluster-notation}\\
	$\phit(\Gamma)$ & Ursell coefficient without a factorial. Ueltschi's $\varphi$ equals $\phit/j!$. & \eqref{eq:ursell}\\
	$\wt_D(\Gamma)$ & $\phit(\Gamma)\prod_iw_D(S_i)/j!$. & \eqref{eq:cluster-notation}\\
	$M(\Gamma)$, $\Omega(\Gamma)$ & Total label size $\sum_i|S_i|$ (with multiplicity) and union $\bigcup_iS_i$. & \eqref{eq:cluster-notation}\\
	$N_J(v,b)$ & Number of connected vertex sets containing $v$, of size at most half the graph, with boundary size $b$. & \S\ref{subsec:cut-count}\\
	$\rho(K)$ & Positive block majorant used for large polymers and the no-giant event. & \eqref{eq:rho}\\
	$C_*$, $\beta_d$, $\eta_L$ & Cluster-count constant $32\mathrm e$; $\beta_d=C_*d^2$; $\eta_L=\beta_ds^{d-(L+5)}$. & Lemma~\ref{lem:cluster-count}, \eqref{eq:beta-eta}\\
	\addlinespace
	\multicolumn{3}{@{}l}{\textbf{Shortest paths and graph examples}}\\
	$\ell$, $\nu_\ell(e,f)$ & Edge distance and number of shortest paths oriented from $V(e)$ to $V(f)$. & \S\ref{subsec:path-calculation}\\
	$z$, $\alpha$ & Algebraic parameters for tree activities; no uniform bound on $\alpha$ is used. & \eqref{eq:z-alpha}\\
	$F_m(y)$, $Y_D^P(u)$ & Path subset polynomial and the size-marked path polymer partition function. & \eqref{eq:Fm}, Proposition~\ref{prop:path-leading}\\
	$\kappa_Q$ & Positive path factor $p(p+Qs)/s^2$. & \eqref{eq:kappa}\\
	$E_Q(e,f)$ & Remainder in the logarithmic correlation asymptotic. & \eqref{eq:asymptotic}--\eqref{eq:error-bound}\\
	$\mathfrak M_{n,Q,e,f}$ & Positive magnitude of the shortest-path leading term. & \S\ref{subsec:main-proof}\\
	$h_E(G)$ & Normalized edge expansion; distinct from the unnormalized two-scale constant $h$. & \eqref{eq:normalized-expansion}\\
	$\lambda_2(G)$ & Second adjacency eigenvalue, ordered by value, not absolute value. & Lemma~\ref{lem:spectral-cut}\\
	$\lambda_{\rm abs}(G)$ & Operator norm of adjacency on $\one^\perp$. Not the control parameter $\lambda=ah/8$. & \S\ref{subsec:random-regular}\\
	$B^{\square n}$, $y_B$ & Cartesian power of a fixed regular factor and its product-isoperimetry constant. & Corollary~\ref{cor:products}\\
	$\Omega_{N,d}$, $\mathbb P_{N,d}$ & Simple labeled $d$-regular graphs on $[N]$ and their uniform law. Not a cluster support $\Omega(\Gamma)$. & \eqref{eq:uniform-regular-law}\\
	$\mathcal A_n(h,L_n)$ & Event that every eligible vertex set satisfies the two-scale boundary inequality. & \eqref{eq:random-iso-event}\\
\end{longtable}
\endgroup

\section*{Statement of AI Use}
The authors  used ChatGPT to assist with drafting and
revising the manuscript, searching the literature, and exploring
and checking mathematical arguments. The author reviewed the
material retained in the manuscript and takes full responsibility
for its contents, including the proofs and references.

\bibliography{reg_ref}
\bibliographystyle{plain}

\end{document}